\providecommand\abx@aux@refcontext[1]{}
\providecommand\abx@aux@cite[1]{}
\providecommand\abx@aux@segm[3]{}
\theoremstyle{plain}
\newtheorem{thm}{Theorem}[section]
\newtheorem{lem}[thm]{Lemma}
\newtheorem{prop}[thm]{Proposition}
\newtheorem{Proposition}[thm]{Proposition}
\theoremstyle{definition}
\newtheorem{definition}[thm]{Definition}
\newtheorem{Definition}[thm]{Definition}
\theoremstyle{remark}
\newtheorem*{remark}{Remark}
\newtheoremstyle{case}{}{}{}{}{}{:}{ }{}
\theoremstyle{case}
\newcommand{\N}{\mathbb{N}}
\newcommand{\R}{\mathbb{R}}
\newcommand{\C}{\mathbb{C}}
\newcommand{\mbf}{\mathbf}
\renewcommand{\a}{\alpha}
\renewcommand{\b}{\beta}
\def\multiset#1#2{\ensuremath{\left(\kern-.3em\left(\genfrac{}{}{0pt}{}{#1}{#2}\right)\kern-.3em\right)}}
\newcommand{\cM}{\mathcal{M}}
\newcommand{\cL}{\mathcal{L}}
\newcommand{\cF}{\mathcal{F}}
\newcommand{\cW}{\mathcal{W}}
\newcommand{\cP}{\mathcal{P}}
\newcommand{\cA}{\mathcal{A}}
\numberwithin{equation}{section}
\newcommand{\A}{\mathbf{A}}
\newcommand{\B}{\mathbf{B}}
\renewcommand{\a}{\mathbf{a}}
\renewcommand{\b}{\mathbf{b}}
\renewcommand{\c}{\mathbf{c}}
\renewcommand{\v}{\mathbf{v}}
\newcommand{\bd}{\mbf{d}}
\newcommand{\be}{\mbf{e}}
\newcommand{\1}{\mathrm{\mathbf{1}}}
\newcommand{\eps}{\varepsilon}
\newcommand{\Hd}{\dim_\mathrm{H}}
\newcommand{\la}{\lambda}
\newcommand{\al}{\alpha}
\newcommand{\ga}{\gamma}
\newcommand{\si}{\sigma}
\newcommand{\Si}{\Sigma}
\newcommand{\ex}{\exists}
\newcommand{\fa}{\forall}
\newcommand{\Lo}{\Longleftrightarrow}
\newcommand{\im}{\implies}
\newcommand{\su}[1]{\left(#1\right)}
\newcommand{\mar}[1]{\mathrm{#1}}
\newcommand{\mal}[1]{\mathcal{#1}}
\newcommand{\no}[1]{\left|\left|#1\right|\right|}
\newcommand{\ab}[1]{\left|#1\right|}
\newcommand{\kk}[1]{\left[#1\right]}
\newcommand{\ka}[1]{\left[#1\right)}
\newcommand{\se}[1]{\left\{#1\right\}}
\renewcommand{\a}{\mathbf{a}}
\renewcommand{\b}{\mathbf{b}}
\renewcommand{\c}{\mathbf{c}}
\title{Fourier decay in parabolic $C^{1+\alpha}$ systems with overlaps}
\author{Gaétan Leclerc}
\address{Department of Mathematics and Statistics, University of Helsinki, Finland}
\email{gaetan.leclerc@helsinki.fi}
\author{Sampo Paukkonen}
\address{Department of Mathematics and Statistics, University of Helsinki, Finland}
\email{sampo.paukkonen@helsinki.fi}
\author{Tuomas Sahlsten}
\address{Department of Mathematics and Statistics, University of Helsinki, Finland}
\email{tuomas.sahlsten@helsinki.fi}
\date{}
\thanks{The authors are all supported by the Research Council of Finland's Academy Research Fellowship \emph{``Quantum chaos of large and many body systems''}, grant Nos. 347365, 353738.}
\begin{document}

\begin{abstract}
We establish power Fourier decay for equilibrium states of parabolic $C^{1+\alpha}$ iterated function systems with overlaps satisfying a multiscale nonlinearity condition. This class includes the Lyons conductance measures $\nu_t$, $0<t<1$, associated to Galton-Watson trees with equal weights yielding advance towards a conjecture of Lyons on the absolute continuity of $\nu_t$ for small $t$. Further applications include Patterson-Sullivan measures for cusped hyperbolic surfaces, extending the work of Bourgain and Dyatlov to parabolic settings, conformal measures for Manneville-Pommeau and Lorenz-type maps, and the construction of the first genuinely $C^{1+\alpha}$ IFSs whose attractors have positive Fourier dimension but are not $C^1$-conjugate to linear IFSs. The proof combines the Bourgain-Dyatlov sum-product strategy with a multiscale induction approach that bypasses the use of spectral gaps for twisted transfer operators needed in several other works in the area.
\end{abstract}

\maketitle


\section{Introduction}

\subsection{Summary of the main results}

There has been substantial recent interest in the high-frequency behaviour of Fourier transforms of measures 
$\widehat{\mu}(\xi)=\int e^{-2\pi i \xi x}\,d\mu(x)$, arising from random constructions \cite{FalconerJin2019,SuomalaShmerkin,GaVargas,SuomalaChen,ChenHan,lai2025fourier,lee2025fourierdimensionfractionalbrownian}, dynamical systems and quantum chaos \cite{Kaufman2,JorSah,BourgainDyatlov,LiNaudPan,MosqueraShmerkin,AlgomHertzWang-Log,AlgomHertzWang-Normality,SahlstenStevens,BakerBanaji,BakerSahlsten,Leclerc-JuliaSets,leclerc2024nonlinearityfractalsfourierdecay,BakerKhalilSahlsten,LeclercPreprint,AvilaLyubichZhang}, and additive combinatorics and geometric measure theory \cite{orponenAdditive2023,orponen2025fourier,demeter2024szemer,yi2024bounded,Khalil-Mixing,fraserspectrum,AlgomKhalil,AlgomOrponen}; see also \cite{Sahlsten-survey}. The study of power Fourier decay is a signature of smoothness of the measure, and can be used to study equidistribution and Diophantine properties of numbers in the support \cite{DavenportErdosLeVeque,PVZZ} and it is also closely related to proving absolute continuity notably in Shmerkin’s work on Bernoulli convolutions \cite{Shmerkin-AbsContBernoulliConv} and higher dimensions by Lindenstrauss and Varj\'u \cite{LindenstraussVarju}. 

For dynamically defined measures, most literature concern uniformly hyperbolic systems with $C^2$ or analytic regularity and separation, where Fourier decay follows from spectral gaps for twisted transfer operators via Dolgopyat’s method \cite{JorSah,BourgainDyatlov,MosqueraShmerkin,AlgomHertzWang-Log,AlgomHertzWang-Normality,SahlstenStevens,BakerBanaji,BakerSahlsten,Leclerc-JuliaSets,leclerc2024nonlinearityfractalsfourierdecay}. In contrast, many natural measures arise from overlapping systems (Bernoulli convolutions \cite{Shmerkin-AbsContBernoulliConv}, Axiom $A$ diffeomorphisms \cite{LeclercPreprint}) or from parabolic dynamics (for example Lorenz-type systems \cite{Lorenz}), and fall outside this framework. An illustrative example is the family of Lyons conductance measures $\nu_t$, $0<t<1$, describing effective conductance in Galton-Watson trees with equal weights \cite{Lyons} arising as self-conformal measures for nonlinear and overlapping parabolic IFSs. Their smoothness properties such as absolute continuity remain difficult to access \cite{simon2001invariant,mihailescu2016random} and as with Bernoulli convolutions they are in fact conjectured to be absolutely continuous for all small $t$ by Lyons \cite{Lyons}. 

The purpose of this paper is to introduce a purely symbolic multiscale induction strategy to prove power Fourier decay for equilibrium states of overlapping parabolic $C^{1+\alpha}$ iterated function systems under a Quantitative Non-Linearity condition (Theorem~\ref{thm:mainCountable}), for which we provide broadly applicable criteria (Theorem~\ref{thm:QNL}). Our approach completely bypasses spectral gaps for twisted transfer operators \cite{LiENS,SahlstenStevens,AlgomRHWang-Polynomial,BakerSahlsten,BakerKhalilSahlsten}, allowing us to treat genuinely $C^{1+\alpha}$ and parabolic systems beyond the scope of current smooth hyperbolic methods. As an application, we prove power Fourier decay for the Lyons conductance measures $\nu_t$ for all $0<t<1$ (Theorem~\ref{thm:lyons}). We note the analogous problem for Bernoulli convolutions would be useful for proving absolute continuity \cite{Shmerkin-AbsContBernoulliConv} and our results highlights the strong effect of nonlinearity as an amplifying effect for smoothness.

Further applications of Theorem~\ref{thm:mainCountable} include power Fourier decay of Patterson--Sullivan measures for hyperbolic surfaces with parabolic elements, extending Bourgain–Dyatlov who used these in the study of scattering resonances in the convex co-compact setting \cite{BourgainDyatlov} (Theorem~\ref{thm:ps}), and all $\delta$-conformal measures for Manneville–Pomeau \cite{MP} and Lorenz-type maps arising from the Lorenz system \cite{Lorenz} (Theorem~\ref{thm:FourierMP}). The multiscale induction we develop is flexible enough to treat genuinely $C^{1+\alpha}$ systems where smooth tools are unavailable. In particular, Theorem~\ref{thm:mainCountable} yields the first examples of $C^{1+\alpha}$ IFSs not conjugate to linear systems whose equilibrium measures exhibit power Fourier decay under a condition we call Measure Non-Linearity (Theorem~\ref{thm:C1alpha}).

We now proceed to state the main Fourier decay theorem under QNL in the next section and criteria to check QNL, and then discuss in detail the applications above.

\subsection{QNL and power Fourier decay}\label{sec:mainthm}

 Let $\mathcal{A}$ be a finite or countable alphabet and let
$\Sigma^\N=\{(a_1,a_2,\ldots)\in\mathcal{A}^\N:\ 
A(a_i,a_{i+1})=1\}$ be a subshift of finite type with admissible words
$\Sigma^n$ and $\Sigma^*=\bigcup_{n\ge1}\Sigma^n$.
Let $\{I_a\}_{a\in\mathcal A}$ be open intervals in $[0,1]$ and for
each admissible pair $ab$ let
$g_{ab}:I_b\to I_a$ be a $C^{1+\alpha}$-diffeomorphic contraction, defining
$g_\mathbf a:=g_{a_1a_2}\circ\cdots\circ g_{a_{n-1}a_n}$ for
$\mathbf a\in\Sigma^n$. Assume uniform $\kappa_+$-contraction
$\kappa_-(ab)\le |g_{ab}'(x)|\le \kappa_+<1$ and bounded
$(C,\alpha)$-distortion
$|\log |g_{ab}'(x)|-\log |g_{ab}'(y)||\le C|x-y|^\alpha$ for all $x,y$. Then there exists attractor
$F=F_\Phi:=\bigcap_{n\ge1}\ \bigcup_{\mathbf a\in\Sigma^n}
\overline{g_\mathbf a(I_{a_n})}$.
Let $\phi$ be a normalised potential with summable variations and let
$\mu_\phi$ be the associated Gibbs measure. In the countable-branch
case we additionally assume the light-tail condition: there exists
$\gamma_0>0$ such that
\[
\sum_{ab\in\Sigma^2}\ \sup_{x\in I_b}
e^{\phi(g_{ab}(x))}\, |g_{ab}'(x)|^{-\gamma_0}<\infty.
\]
For $\mathbf a,\mathbf b\in\Sigma^n$ and $x,y\in F$ define
\[
\Delta_n((\mathbf a,x),(\mathbf b,y))
:=
(\log g_\mathbf a'(x)-\log g_\mathbf b'(x))
-
(\log g_\mathbf a'(y)-\log g_\mathbf b'(y)).
\]
We say that $\Phi$ satisfies \emph{(QNL)} if there exist
$\rho,\Theta\in(0,1)$ such that for any interval
$I\subset\R$ and $n\in\N$,
\[
\sum_{\mathbf a,\mathbf b,\mathbf c,\mathbf d\in\Sigma^n}
\mu(I_\mathbf a)\mu(I_\mathbf b)\mu(I_\mathbf c)\mu(I_\mathbf d)
\,
\mathbf 1_I\!\left(
\Delta_n((\mathbf a,x_\mathbf c),(\mathbf b,x_\mathbf d))
\right)
\lesssim
|I|^\Theta+\rho^n.
\]

\begin{thm}\label{thm:mainCountable}
Let $\Phi$ be a $C^{1+\alpha}$ IFS as above, satisfying uniform
$\kappa_+$-contraction and bounded $(C,\alpha)$-distortion. Let $\phi$
be a normalised potential with summable variations satisfying the
light-tail condition, and let $\mu_\phi$ be the associated Gibbs
measure without atoms. If \emph{(QNL)} holds, then there exists $\beta>0$ such that
\[
\Big|\int \chi(x,\xi)\, e^{i\psi(x,\xi)}\, d\mu_\phi(x)\Big|
\ \lesssim\ |\xi|^{-\beta}, \qquad |\xi|\to\infty,
\]
for every pair $\chi:I\times\R\to\R$ and $\psi:I\times\R\to\R$ with
\[
\|\chi(\cdot,\xi)\|_{C^\alpha}\lesssim |\xi|^{\varepsilon_\chi},\qquad
\|\log \partial_x\psi(\cdot,\xi)\|_{C^\alpha}\lesssim |\xi|^{1+\tilde\varepsilon_\psi},\qquad
\inf_x|\partial_x\psi(x,\xi)|\gtrsim |\xi|^{1-\varepsilon_\psi},
\]
for some sufficiently small
$\varepsilon_\chi,\varepsilon_\psi,\tilde\varepsilon_\psi>0$.
\end{thm}

To make Theorem~\ref{thm:mainCountable} usable in applications, we seek
checkable sufficient conditions for \emph{(QNL)}. In the $C^2$
uniformly hyperbolic setting, Fourier decay is typically obtained by
establishing spectral gaps for twisted transfer operators via
Dolgopyat-type estimates (e.g. \cite{LiNaudPan,SahlstenStevens,AlgomRHWang-Polynomial,BakerSahlsten,BakerKhalilSahlsten}). A dynamical formulation of the required
non-concentration is the \emph{Uniform Non-Integrability (UNI)}
condition introduced by Chernov and Dolgopyat \cite{Dolgopyat}. Defining cocycle
$\tau(g_\mathbf a(x)):=-\log|g_\mathbf a'(x)|$ and fluctuation
$X_\mathbf a(x,y):=\tau(g_\mathbf a(x))-\tau(g_\mathbf a(y))$, we say
that $\Phi$ satisfies \emph{(UNI)} if there exist $c_0>0$ and
infinite $N$ such that for every $c\in\mathcal A$ there exist
$\mathbf a,\mathbf b\in\Sigma^N$ with $b(\mathbf a)=b(\mathbf b)=c$ satisfying
\[
|X_\mathbf a(x,y)-X_\mathbf b(x,y)|\ge c_0|x-y|^\alpha
\quad\text{for all }x\ne y\in F\cap I_c.
\]
However, in genuinely $C^{1+\alpha}$ setting verifying UNI on the entire attractor becomes difficult leading to many difficulties e.g. in smooth dynamics \cite{TsujiiZhang,LeclercPreprint}. In
these cases one instead seeks a statistical form of non-concentration
along the equilibrium state. Letting $\delta=\dim_HF$ be the Hausdorff dimension and $\mu$ be the
equilibrium state for $\delta\tau$, we say that the IFS $\Phi$ satisfies the
\emph{Measure Non-Linearity (MNL)} condition if there exists
$C_{\mathrm{(MNL)}} > 0$ such that for all $N$ there are
$\mathbf a,\mathbf b\in\Sigma^N$ with
\[
\forall x_0\in F,\ \forall I\subset\R,\qquad
\big(X_\mathbf a(\cdot,x_0)-X_\mathbf b(\cdot,x_0)\big)_*\mu(I)
\ \le\ C_{\mathrm{(MNL)}}\,|I|.
\]
This replaces the uniform lower bound in \emph{(UNI)} by a
measure-theoretic non-concentration property along the symbolic tree,
in the spirit of the coupling of geometric and symbolic directions in
\cite{TsujiiZhang}.

Under UNI or MNL, we then have the needed QNL for Theorem \ref{thm:mainCountable}:

\begin{thm}\label{thm:QNL}
If infinite $C^{1+\alpha}$ IFS has
\emph{(UNI)}, or in finite branch case \emph{(MNL)}, then \emph{(QNL)} holds.
\end{thm}

Note here we allow arbitrary overlaps. The UNI part in finite branch case can be verified using spectral gaps for twisted transfer operators as in \cite{SahlstenStevens} and in overlapping case a co-cycle version of the Dolgopyat method
was needed and indeed developed in \cite{AlgomRHWang-Polynomial,BakerSahlsten} via average contraction of random Dolgopyat operators. Our strategy avoids these constructions and replaces it by the multiscale induction allowing us to access parabolic and $C^{1+\alpha}$ cases.

Let us now discuss applications of Theorem \ref{thm:mainCountable} and Theorem \ref{thm:QNL} reducing power Fourier decay to UNI or MNL type statements.

\subsection{Smoothness of effective conductance of Galton-Watson trees.} Let $(p_k)_{k\ge0}$ be a probability distribution on $\mathbb{N}\cup\{0\}$ and let $T$ be the random genealogical tree obtained from the associated \textit{Galton-Watson branching process} begun with one individual $\varnothing$, in which each vertex has $k$ children with probability $p_k$, independently of all other vertices. Adjoining a parent vertex $v$ to $\varnothing$ and connecting it to $\varnothing$ by an edge, we may view the resulting graph $T\cup\{v\}$ as an electrical network in which each edge has unit conductance. Writing $\Theta$ for the effective conductance of $T \cup \{\v\}$ from $v$ to infinity, we know that $\Theta$ equals to the probability of a simple random walk starting from $v$ and returning to $v$ \cite{Lyons_Peres_2017,lyons1995ergodic,curien2017harmonic}. Then the effective conductance is a random continued fraction $\Theta = \frac{1}{1+\frac{1}{X_1+\frac{1}{1+\cdots}}}$ for i.i.d. $X_1,X_2,\ldots$. In \cite{Lyons}, Lyons considered the special case where $X_i \in \{0,t\}$ i.i.d. with equal probabilities. Thus in this case $\Theta$ is obtained as the infinite random
composition $\Theta=\lim_{n \to \infty} f_{X_1}\circ f_{X_2}\circ\cdots f_{X_n}(0)$ for $f_0(x)=\frac{x}{1+x}$ and $f_t(x)=\frac{x+t}{1+x+t}$ so if we denote by $\nu_t$ the law of $\Theta$, it is an equilibrium state for the overlapping IFS $\{f_0,f_t\}$, which is parabolic as $f_0'(0) = 1$.

On the smoothness of $\nu_t$, Lyons \cite{Lyons} proved $\nu_t$ is singular for any $t > t_0 \approx 0.2689$ and conjectured $\nu_t$ should to be absolutely continuous for all small $t$. This was motivated by numerics and it being a nonlinear analogue of the Bernoulli convolution $\mu_\lambda$ (distribution of $\sum \pm \lambda^n$), where a similar conjecture is widely open despite much recent progress \cite{Shmerkin-AbsContBernoulliConv,Hochman-Annals,Varj__2019} and for higher dimensional analogues of Bernoulli convolutions \cite{Lindenstrauss_2016}. As with Bernoulli convolutions, absolute continuity and density information of $\nu_t$ are known for almost every $t$ by transversality method \cite{simon2001invariant,solomyak1998non,solomyak2001q,barany2022typical}. In \cite{Shmerkin-AbsContBernoulliConv} Shmerkin related the absolute continuity of $\mu_\lambda$ to the power Fourier decay of Bernoulli convolutions associated to powers $\lambda^k$, but it remains a very challenging problem prove this, where the Erd\"os-Kahane method can be applied for almost every $\lambda$ (see e.g. \cite{Solom,Shmerkin-AbsContBernoulliConv}). 

By inducing the parabolic Lyons IFS \cite{mihailescu2016random}, as a consequence of Theorem \ref{thm:mainCountable} we have:

\begin{thm}\label{thm:lyons}
For all $0<t<1$, the Lyons conductance measure $\nu_t$ has power Fourier decay.
\end{thm}

Now Shmerkin's strategy \cite{Shmerkin-AbsContBernoulliConv} cannot be directly used with Theorem \ref{thm:lyons} to prove Lyons's conjecture due to the lack of convolution structure of $\nu_t$. However, as in the work of Lindenstrauss and Varj\'u on the higher dimensional setting \cite{Lindenstrauss_2016}, which is very different from the one dimensional case due to non-Abelian nature of $SO(d)$ instead a spectral gap assumption on the underlying transfer operator was crucial in the proof of absolute continuity via a power Fourier decay property of the self-similar measure. Given that our proof gives a non-concentration for the Lyons IFS typically needed for spectral gaps \cite{BakerSahlsten}, one may try to adapt more directly the spectral gaps of twisted transfer operators associated to $\nu_t$ as in \cite{Lindenstrauss_2016} to try to upgrade Theorem \ref{thm:lyons} to a proof of Lyons conjecture.

\subsection{Patterson--Sullivan measures on parabolic limit sets.} Fourier decay for Patterson--Sullivan measures associated to convex co-compact hyperbolic surfaces was established by Bourgain and Dyatlov \cite{BourgainDyatlov} as a tool to bound the resonance free regions for scattering resonances by proving improved the Fractal Uncertainty Principle bounds. Extension of this to convex co-compact subgroups of $\mathrm{PSL}(2,\C)$ was done in \cite{LiNaudPan} using twisted transfer operators and to higher dimensions in \cite{BKS} adapting an $L^2$-flattening approach of Khalil \cite{Khalil-Mixing}. We will now see that Theorem \ref{thm:mainCountable} allows us to expand these to geometrically finite case where there are parabolic elements in the group. 

Let $X=\Gamma\backslash\mathbb{H}$ be a non-compact hyperbolic surface with cusps, where $\Gamma<\mathrm{PSL}(2,\mathbb{R})$ is torsion-free and geometrically finite with parabolic elements acting on $\mathbb{H}\cup\dot{\mathbb{R}}$ (the extended real line, identified with $\mathbb{S}^1$ in the disk model). Denote the limit set by $\Lambda_\Gamma\subset\dot{\mathbb{R}}$. Let $\mu_\Gamma$ be the Patterson–Sullivan measure centered at $i\in\mathbb{H}$. If $\delta>0$ is the critical exponent of $\Gamma$, then $\mu_\Gamma$ is $\delta$–conformal and satisfies, for all bounded Borel $f:\dot{\mathbb{R}}\to\mathbb{C}$ and all $\gamma\in\Gamma$,
\[
\int_{\Lambda_\Gamma} f(x)\, d\mu_\Gamma(x)
= \int_{\Lambda_\Gamma} f(\gamma(x))\, |\gamma'(x)|_{\mathbb{S}^1}^{\,\delta}\, d\mu_\Gamma(x),
\]
where $|\gamma'(x)|_{\mathbb{S}^1}=\frac{1+x^2}{1+\gamma(x)^2}\,|\gamma'(x)|$.

\begin{thm}\label{thm:ps}
Let $\Gamma<\mathrm{PSL}(2,\mathbb{R})$ be torsion-free, geometrically finite, with parabolic elements. Then there exists $\beta>0$ such that
\[
\Big|\int \chi(x,\xi)\, e^{i\psi(x,\xi)}\, d\mu_\Gamma(x)\Big|\ \lesssim\ |\xi|^{-\beta},\qquad |\xi|\to\infty,
\]
for any $\chi:I\times\mathbb{R}\to\mathbb{R}$ and $\psi:I\times\mathbb{R}\to\mathbb{R}$ with
\[
\|\chi(\cdot,\xi)\|_{C^\alpha}\lesssim |\xi|^{\varepsilon_\chi},\quad
\|\log\partial_x\psi(\cdot,\xi)\|_{C^\alpha}\lesssim |\xi|^{\tilde\varepsilon_\psi},\quad
\inf_x|\partial_x\psi(x,\xi)|\gtrsim |\xi|^{1-\varepsilon_\psi},
\]
for some sufficiently small $\varepsilon_\chi,\varepsilon_\psi,\tilde\varepsilon_\psi>0$.
\end{thm}

Bourgain–Dyatlov \cite{BourgainDyatlov} used Fourier decay to prove a Fractal Uncertainty Principle (FUP) for convex co-compact groups ($\delta_\Gamma<1/2$), with consequences for the distribution of scattering resonances. In the parabolic case one has $\delta_\Gamma\ge 1/2$, and the strategy in \cite{BourgainDyatlov} does not apply directly. It would be interesting to investigate whether Theorem~\ref{thm:ps} has implications for the meromorphic continuation of the resolvent on geometrically finite manifolds with cusps (cf.\ \cite{GuillarmouMazzeo}).

\subsection{Manneville--Pommeau and Lorenz maps.} 

Our axioms in Theorem~\ref{thm:mainCountable} were designed with Young tower models in mind, obtained by inducing parabolic systems such as Lorenz maps (arising from the Lorenz flow) \cite{Lorenz,LorenzYoungCoding} and, in higher dimensions, the Chernov–Markarian–Zhang structure for slowly mixing billiards (e.g. the Bunimovich stadium) \cite{ChernovZhang,Markarian,BMT}. We give two applications: Manneville–Pommeau maps and Lorenz maps; the assumptions can be verified in broader induced settings.

\begin{itemize}
\item[(a)] \textit{Manneville–Pommeau maps.}
We use the Liverani–Saussol–Vaienti parametrization \cite{LSV}:
\[
T_\alpha(x)=
\begin{cases}
x\big(1+2^\alpha x^\alpha\big), & x<\tfrac12,\\
2x-1,& x\ge \tfrac12,
\end{cases}
\]
with $T'_\alpha(0)=1$ (parabolic fixed point). Let $f_0,f_1$ be the inverse branches (with $f_1$ the left, nonlinear one, and $f_0(x)=(x+1)/2$), set $x_0=1$, $x_1=\tfrac12$, $x_{n+1}=f_1(x_n)$, and $I=[\tfrac12,1]$, $I_n=[x_{n+1},x_n)$. This gives a Markov partition of $[0,1]=T_\alpha(I)$ with $I_{n-1}=T_\alpha(I_n)$. Let $R:I\to\mathbb{N}$ be the first return time to $I$, and define the induced map $T_A:I\to I$ by $T_A(x)=T_\alpha^{R(x)}(x)$. Then $T_A$ is an infinitely branched expanding Markov map with inverse branches $g_a:I\to A_a$, $a\ge0$ (where $A_0=[3/4,1]$ and $A_a:=(T_\alpha^{-1}I_a)\cap I$).

\item[(b)] \textit{Lorenz maps.}
Consider the map $f:J\setminus\{0\}\to J$, $J=[-1/2,1/2]$, defined by \cite[§3]{AV}
\[
f(x)=
\begin{cases}
-a(-x)^\alpha+\tfrac12,& x<0,\\
a x^\alpha-\tfrac12,& x>0,
\end{cases}
\qquad \alpha:=-\lambda_3/\lambda_1\in(0,1),\ 0<a\le 2^\alpha,
\]
arising from the Poincaré map of the geometric Lorenz flow ($\lambda_2<\lambda_3<0<-\lambda_3<\lambda_1$). The derivative of $f$ blows up near $0$, and inverse branches have zero derivative there. Diaz-Ordaz \cite[Thm.~2]{LorenzYoungCoding} constructs an induced expanding Markov map $T_A:I\to I$ (for some neighborhood $I\subset[-1,1]$ of $0$) with bounded distortion and first return time $R:I\to\mathbb{N}$ having exponential tails, under uniform contraction (1) and bounded distortion (2).
\end{itemize}

Given a base potential $\phi_0$ for the original system (e.g.\ $\phi_0=-\delta\log|T'_\alpha|$ for Manneville–Pommeau or $\phi_0=-\delta\log|f'|$ for Lorenz), define the induced potential on $I$ by $\phi(x):=S_{R(x)}\phi_0(x)$. For our results we require \emph{summable variations} for $\phi$:
$
\sum_{n\ge1} V_n(\phi)<\infty$, where $V_n(\phi):=\sup_{\mathbf{a}\in\Sigma^n}\ \sup_{x,y\in I_\mathbf{a}}|\phi(x)-\phi(y)|,$
which ensures that the Sarig theory applies to the countable Markov map $T_A$ \cite{Sarig1,Sarig2}. A sufficient condition is $\sum_n n\, V_n(\phi_0)<\infty$ \cite[§2.4]{BruinTodd}. Let $\mu_\phi$ be a Gibbs measure for the induced map $T_A$. Define the projected measure on the parabolic system by
\[
\nu_\phi(E):=\sum_{a=0}^\infty\ \sum_{k=0}^a \mu_\phi\big(T_\alpha^{-k}(E)\cap A_a\big),\qquad E\subset I,
\]
and normalize by $\int R\, d\mu_\phi$ when finite. This is the standard way (e.g.\ for SRB/conformal measures) to transfer statistical properties from the Young tower to the base system; see \cite{BruinTodd,Todd}.

\begin{thm}\label{thm:FourierMP}
Assume $\phi:I\to\mathbb{R}$ has summable variations and, for some $\gamma_0>0$,
\[
\sum_{a\in\mathcal{A}}\ \sup_{x\in I} w_a(x)\, |g_a'(x)|^{-\gamma_0}<\infty,
\]
where $w_a(x)=e^{\phi(g_a(x))}$. Let $\mu_\phi$ be the Gibbs measure for $T_A$ arising from the Manneville–Pommeau map $T_\alpha$ or Lorenz map $f$. Then the projected parabolic measure $\nu_\phi$ for $T_\alpha$ (or Lorenz map $f$ respectively) satisfies, for some $\beta>0$,
\[
\Big|\int \chi(x)\, e^{i\xi \psi(x)}\, d\nu_\phi(x)\Big|\ \lesssim\ |\xi|^{-\beta},
\]
for any $\chi,\psi:I\to\mathbb{R}$ with $\operatorname{supp}\chi$ disjoint from the parabolic fixed point, $\|\chi\|_{C^\alpha}<\infty$, $\|\psi\|_{C^{1+\alpha}}<\infty$, and $\inf|\psi'|>0$.
\end{thm}

For Manneville–Pommeau maps $T_\alpha$ with $0<\delta<1$ and $0<\alpha<1$, the choice $\phi_0=-\delta\log T'_\alpha$ yields an induced potential $\phi=S_R\phi_0$ with summable variations and a light-tail condition for any $\gamma_0>0$ (see \cite{BruinTodd,Todd}); hence Theorem~\ref{thm:FourierMP} gives that the $\delta$–conformal measure of $T_\alpha$ has positive Fourier dimension. The same argument applies to Lorenz maps once (UNI) is verified for the induced model (see Section~\ref{sec:para} for examples of parameters, where this holds).

\begin{remark}
The assumption that $\operatorname{supp}\chi$ avoids the parabolic fixed point can be relaxed to a local decay condition; we keep the support condition for clarity of statement.
\end{remark}

An intriguing direction is to relax the light-tail assumption. Because Theorem~\ref{thm:mainCountable} is formulated in terms of exponential moment conditions, it is well-suited to being adapted to heavier tails, where one might expect sub-polynomial (e.g.\ polylogarithmic) Fourier decay. In the context of Furstenberg measures, such log–log decay has already been demonstrated in \cite{Dihn}. Extending our framework to this regime would provide a natural bridge between strong polynomial decay and much weaker forms of Fourier decay, potentially revealing new structures in dynamical systems with weaker hyperbolicity.

\subsection{$C^{1+\alpha}$ iterated function systems.} Finally, many dynamically defined measures of interest arise from systems that admit only $C^{1+\alpha}$ regularity, for instance due to rough holonomies in codings of hyperbolic diffeomorphisms or in nonlinear cocycle models such as circle extensions and the Fibonacci Hamiltonian. In such settings the associated temporal distance functions are typically non-smooth, and the classical Fourier decay mechanisms based on Dolgopyat-type spectral gap estimates are not available.

Our framework applies in this genuinely low-regularity regime. In particular, we construct examples of $C^{1+\alpha}$ iterated function systems whose conformal measures admit power Fourier decay despite not being $C^1$-conjugate to linear models, that is, there is no $C^1$ map $h$ such that $\{h f h^{-1} : f\in \Phi\}$ consists of maps with constant derivative.

\begin{thm}\label{thm:C1alpha}
There exist $C^{1+\alpha}$ iterated function systems with finitely many branches whose attractors support equilibrium states with power Fourier decay, and which are not $C^1$-conjugate to linear IFS. 
\end{thm}

 In particular, these examples provide conformal measures of positive Fourier dimension arising from genuinely $C^{1+\alpha}$ dynamics. The construction proceeds by coupling the symbolic and geometric oscillations of the distortion function through a novel fixed-point argument in the moduli space of IFS parametrised by equilibrium states. This allows us to verify the multiscale nonlinearity condition required by Theorem~\ref{thm:mainCountable} without relying on higher regularity or smooth symbolic holonomies.

The condition on MNL is quite restrictive as it requires oscillations at right dimension on the fractal. This suggests the possibility of developing a $C^{1+}$ theory of Fourier decay, spectral gaps and fractal uncertainty principles in settings where the distortion functions exhibit only autosimilar or fractal-type oscillations (e.g. Brownian/Weierstrass-type as in \cite{TsujiiZhang,LeclercPreprint}), rather than smooth curvature.

 Finally, let us remark on the opposite case when the system is conjugated to a linear IFS. The analytic and $C^2$ settings (e.g.\ \cite{AlgomRHWang-Polynomial,BakerSahlsten,BakerBanaji,AlgomChangWuWu,BanajiYu,AlgO}), power Fourier decay is typically classified through the presence of non-linear branches with non-vanishing second derivative or non-flatness condition of the map \cite{AlgomChangWuWu}. In our rough $C^{1+\alpha}$ framework, \emph{autosimilarity} of the distortion function might play the role of the second derivative giving the necessary non-flatness property needed in \cite{AlgomChangWuWu}. Establishing such a replacement would move the theory toward a genuine characterisation of power Fourier decay at the $C^{1+\alpha}$ regularity threshold.

\subsection*{Organisation of the article} In Section \ref{sec:moments} we first give the preliminary thermodynamic notations and derive the needed moment bounds and upper regularity using the light tail condition where the uniform contraction, bounded distortions and light tail conditions are crucially used. In Section \ref{sec:sumprod} we show how the moments arise from the discretised sum-product bounds if we allow the multiplicative random variables have unbounded support. In Section \ref{sec:reduction} we then begin the proof of Theorem \ref{thm:mainCountable} by reducing the Fourier transform of the equilibrium state $\mu$ to exponential sums of multiplicative random variables with moment bounds. To then apply the moment bounds and upper regularity derived in Section \ref{sec:moments} and the sum-product bound from Section \ref{sec:sumprod}, in Section \ref{sec:nonconc} we prove the needed non-concentration bound from (QNL). In section \ref{sec:nonconcentration}, we prove that (QNL) holds under UNI and MNL conditions. In Section \ref{sec:lyons} we check conditions for Theorem \ref{thm:lyons}. In section \ref{sec:ps} we prove Theorem \ref{thm:ps}. In section \ref{sec:para} we prove Theorem \ref{thm:FourierMP}. In section \ref{sec:IFSconstruct} we construct a $C^{1+}$ IFS satisfying (MNL) and prove Theorem \ref{thm:C1alpha}. 

\section{Moment bounds and upper regularity} \label{sec:moments}

In this section we introduce some supplementary notations and prove two preliminary statements that will prove useful to prove Theorem \ref{thm:mainCountable}. The first is a precise bound coming from the properties of the \emph{pressure}. The second is an upper-regularity estimate for $\mu_\phi$ on this setting. A key reference to the thermodynamical formalism for countable Markov shift we use in the symbolic level is given by Sarig's papers \cite{Sarig1,Sarig2}.

\subsection{Symbolic coding, Gibbs measures and transfer operators} 

Recall the notation from Section \ref{sec:mainthm}, where $
\Sigma^\N:=\{(a_1,a_2,\ldots)\in\mathcal{A}^\N:\ A(a_i,a_{i+1})=1\ \text{for all }i\}
$ is the countable Markov shift
determined by a matrix $A \in \{0,1\}^{\mathcal{A}\times\mathcal{A}}$ and $\Sigma^n$ is the set of admissible words of length $n$ and by $\Sigma^*=\bigcup_{n\ge1}\Sigma^n$. Then a word $\mathbf{a} := a_1\dots a_n \in \mathcal{A}^n$ satisfying $A(a_i,a_{i+1})=1$ for all $i$ is called \textit{admissible} and cylinder $[\a] \subset \Sigma^\N$ is the subtree of infinite words with initial $\a$. We assume there is a collection $\{I_a\}_{a\in\mathcal{A}}$ of possibly overlapping open intervals in $[0,1]$ and for each admissible pair $ab\in\Sigma^2$ we are given a $C^{1+\alpha}$ contraction
$
g_{ab}: I_b \to I_a,$
 which form an \textit{iterated function system} $\Phi = \{g_{ab} : ab \in \cA^2 \ \text{admissible}\}$.
For an admissible word $\mathbf{a}=a_1\cdots a_n\in\Sigma^n$ we define $
g_\mathbf{a}:=g_{a_1a_2}\circ\cdots\circ g_{a_{n-1}a_n}.$ Recall that the notation $\mathbf{a}\to x$ simply means that $g_\a$ is defined at $x$, i.e. $x\in I_{b\su{\a}}$ with $b\su{\a}$ denoting the last letter of $\a$.

We suppose the IFS $\Phi= \{g_{ab} : ab \in \cA^2 \ \text{admissible}\}$ satisfies the conditions (1)-(3) from Section \ref{sec:mainthm}, though in this section we only need uniform $\kappa_+$ contraction (1) and bounded $(C,\alpha)$-distortions (2). The uniform contraction of the maps $g_{ab}$ guarantees that there exists a unique attractor $F \subset I$ associated to $\Phi$, that is, for $\mathbf{a} \in \Sigma^{n+1}=a_0 \dots a_n$ the map $g_{\mathbf{a}} := g_{a_0 a_1} \circ \dots \circ g_{a_{n-1} a_n} : I_{a_n} \rightarrow I_{a_0} $, and the interval $I_{\mathbf{a}} := g_{\mathbf{a}}(I_{a_n})$, then $F$ is the compact set given by $F = \bigcap_{n \in \N}\bigcup_{\a \in \Sigma^n}\overline{I_\a}$. This also gives us a projection $\pi : \Sigma^\N \to F$.

Since the IFS $\Phi$ we work on in Theorem \ref{thm:mainCountable} has overlaps, the map $\pi$ can be very badly away from being injective, we will need to consider thermodynamical formalism and the associated measures $\mu$ as projections of measures on the symbolic space $\Sigma^\N$. Formally, if $\phi : F \to \R$ is a potential, we can consider $\phi$ also as a map $\phi \circ \pi : \Sigma^{\mathbb{N}} \to \R$. Then if we let $\nu_\phi$ be the (symbolic) Gibbs measure associated to $\phi$ on $\Sigma^\N$, we mean that if is a probability measure satisfying
\begin{align*}
 \int_{\Sigma^\N} f(\a) \, d\nu_\phi(\a) = \sum_{ab \in \Sigma^2} \int_{[b]} e^{\phi(g_{ab}(\a))} f(g_{ab}(\a)) \, d\nu_\phi(\a)
\end{align*}
for all continuous $f : \Sigma^\N \to \C$. Here $b(\mathbf{a})$ is the last letter of a word. Thus this implies the projected Gibbs measure $\mu_\phi := \pi\mu_\phi$ on the attractor $F$ satisfies the $\phi$-conformality condition
\begin{align*}
 \int_F f(x) \, d\mu_\phi(x) = \sum_{ab \in \Sigma^2} \int_{I_b} e^{\phi(g_{ab}(x))} f(g_{ab}(x)) \, d\mu_\phi(x)
\end{align*}
for all continuous $f : F \to \C$ and note by the definition of $b(\a)$ we can write $g_\mathbf{a}:I_{b(\mathbf{a})} \rightarrow I_{\mathbf{a}}$. So from now on when we write $\mu_\phi$ is a Gibbs measure, we assume $\mu_\phi$ comes as a projection of a symbolic measure $\nu_\phi$ defined on $\Sigma^\N$. We also assume $\mu_\phi$ satisfies the light $(C_{\phi,\gamma_0},\gamma_0)$-tail condition (4) from Section \ref{sec:mainthm}. 

Now the $\phi$-conformality condition can be iterated to yields the formula:
$$\forall f \in C^0(F,\mathbb{C}), \ \int_F f d\mu = \sum_{\mathbf{a} \in \Sigma^{n+1}} \int_{b(\mathbf{a})} e^{S_n \phi(g_{\mathbf{a}}(x))} f(g_{\mathbf{a}}(x)) d\mu(x),$$
where $S_n \phi(g_\a(x)) := \sum_{k=0}^{n-1} \phi( g_{\a|k}(x) )$ is a Birkhoff sum. It is then convenient to denote $w_\mathbf{a}(x) := e^{S_n \phi(g_{\mathbf{a}}(x))}$. The invariance formula can then be rewritten as
$$ \int f d\mu = \int \mathcal{L}_\varphi^n(f) d\mu, $$
where $\mathcal{L}_\phi : f \mapsto \mathcal{L}_\phi f$ is the \emph{transfer operator} associated to $\phi$, given by the formula
$$ \mathcal{L}_\phi(f)(x) := \underset{I_b \ni x}{\sum_{ab \in \Sigma^2}} w_{ab}(x) f(g_{ab}(x)). $$
Iterating yields $$ \forall x \in I_b, \ \mathcal{L}_\phi^n(f)(x) := \underset{b(\mathbf{a})=b}{\sum_{\mathbf{a} \in \Sigma^{n+1}}} w_{\mathbf{a}}(x) f(g_\mathbf{a}(x)). $$
Notice that the normalisation assumption on $\phi$ implies that $\mathcal{L}_\phi^n(1)=1$. Notice further that the $\phi$-conformality assumption (and summable variations of $\phi$) implies the following \emph{Gibbs estimate}: there exists a \textit{Gibbs constant} $C_\phi \geq 1$ such that
$$ \forall \mathbf{a} \in \Sigma^n, \forall x \in I_{b(\mathbf{a})}, \ C_\phi^{-1} w_{\mathbf{a}}(x) \leq p_\a := w_\a(x_{b(\a)}) \leq C_\phi w_{\mathbf{a}}(x).$$
Troughout the text well will use the notation $a_n \lesssim b_n$ if there exists some constant $C \geq 1$ such that $a_n \leq C b_n$. We will write $a_n \simeq b_n$ if $a_n \lesssim b_n \lesssim a_n$. Gibbs estimates are then rewritten $w_\mathbf{a}(x) \simeq p_\a$ for all $x$.

\subsection{Moment bounds from the tail condition}

By Birkhoff ergodic theorem, we know that the natural scale for the contractions $|g_{\mathbf{a}}'(x)|$ is given by $e^{-\lambda n}$ for $\mu$-almost every $x$, where $\lambda$ is the \textit{Lyapunov exponent} of $\mu$, is defined by $$\lambda := -\int \tau(x) \, d\mu(x),$$ where $\tau : I \to \R$ is the geometric potential $\tau(g_\a(x)) = \ln |g_\a'(x)|$, $x \in I$, $\a \to x$. Proposition 2.1 quantifies deviation from this scale of contraction using properties of the \textit{Gurevich pressure} $P(\phi + t(\tau - \lambda))$, which is defined for a potential $\psi$ as
\begin{align*}
 P(\psi) := \lim_{n \to \infty} \frac{1}{n} \log\sum_{\a \in \Sigma^n}\sup_{x \in I_{b(\mathbf{a})}} e^{S_n \psi(g_\a(x))}.
\end{align*}
Notice that since $\mathcal{L}_\phi^n(1)=1$, we find that our normalized potential $\phi$ satify $P(\phi)=0$. It is also known that $\frac{d}{dt} P(\phi + t \tau)_{|t=0} = \int \tau d\mu$. The differentiability properties of the pressure allows us to bound the spreading of the random variable $e^{\lambda n}|g_{\mathbf{a}}'(x)|$ in the following way:
\begin{Proposition}\label{prop:moments}
Suppose $\mu$ satisfies the $(C_{\phi,\gamma_0},\gamma_0)$-light tail condition. Then there exists $t_{\gamma_0} \in (0,1)$, $\eps_{\gamma_0} > 0$ such that:
\begin{align*}
 \forall t \in \su{-t_{\gamma_0}, t_{\gamma_0}}, \exists n_0(t), \forall n \geq n_0(t), \forall x \in I, \ \sum_{\a \in \Sigma^n\atop \a \rightarrow x} w_{\mathbf{a}}(x)
(e^{\lambda n} |g_{\mathbf{a}}'(x)|)^{t} \leq 2C_\phi e^{\eps_{\gamma_0} n t^2},
\end{align*}
where
$$\eps_{\gamma_0} := \sup_{|t| \leq t_{\gamma_0} } \frac{|P(\phi + t (\tau-\lambda))|}{t^2} < \infty.$$
\end{Proposition}

 The optimal choice of $\eps_{\gamma_0}$ comes from the second order term
$$\frac{d^2}{dt^2} P(\phi + t(\tau - \lambda))|_{t = 0} = \lim_{n \to \infty} \frac{1}{n} \int (S_n \tau)^2 \, d\mu$$
of the Taylor approximation for $t \mapsto P(\phi+ t(\tau-\lambda))$ near $0$. The formula given by the limit of Birkhoff sums of $\tau$ is proved e.g. in \cite{ParryPollicott}.

The proof of Proposition \ref{prop:moments} can be deduced from the proof leading to large deviation principle, e.g. \cite{JorSah} and the references there-in. However, as our result is stated quantitatively in terms of moments, we included a direct proof using Sarig's results from \cite{Sarig1} for the countable Markov shift for completeness.

\begin{proof}[Proof of Proposition \ref{prop:moments} ]
Let $-\gamma_0 \leq t < 0$. Then if write $p_a := w_a(x_b(a))$ and recalling that $\kappa_-(a) \leq |g_a'(x)| \leq \kappa_+$ for all $x \in I_b$, where we have supressed the prefix notation of $b$ for admissibility, then by the light $(C_{\phi,\gamma_0},\gamma_0)$-tail property and using the Gibbs constant $C_\phi$ we have
\begin{align*}
 & \sum_{\mathbf{a} \in \Sigma^n \atop \a \rightarrow x} w_{\mathbf{a}}(x)
|e^{\lambda n} g_{\mathbf{a}}'(x)|^{t} \leq C_\phi \Big( \sum_{a \in \cA} p_a (e^{\lambda}\kappa_-(a))^{t}\Big)^{n} \\
& \leq C_\phi e^{t\lambda n} \Big( \sum_{a \in \cA} p_a \kappa_-(a)^{-t}\Big)^{n} \leq C_\phi e^{t \lambda n} C_{\phi,\gamma_0}^n = C_\phi e^{(t\lambda + \log C_{\phi,\gamma_0}) n}
\end{align*}
Moreover, if $t \geq 0$, then we can simply bound
$$ \sum_{\mathbf{a} \in \Sigma^n} w_{\mathbf{a}}(x)
|e^{\lambda n} g_{\mathbf{a}}'(x)|^{t} \leq C_\phi \Big( \sum_{a \in \cA} p_a (e^{\lambda}\kappa_+)^{t}\Big)^{n} = C_\phi (e^\lambda \kappa_+)^{tn} = C_\phi e^{t(\lambda + \log \kappa_+)n},$$
where note that by uniform $\kappa_+$-contraction the Lyapunov exponent of $\mu$ satisfies $
 \lambda \geq - \int \log \kappa_+ \, d\mu(x) = -\log \kappa_+ > 1$ as $\kappa_+ < 1$, so the exponent on is non-negative. Since
\begin{align*}
 \frac{1}{n}\ln \sum_{\mathbf{a} \in \Sigma^n \atop \a \to x} w_{\mathbf{a}}(x)
|e^{\lambda n} g_{\mathbf{a}}'(x)|^{t} \underset{n \rightarrow \infty}{\longrightarrow} P(\varphi + t(\tau-\lambda))
\end{align*}
this proves that for all $t \in [-\gamma_0,\infty)$ the pressure is finite:
$$P(\varphi + t(\tau-\lambda)) \leq \begin{cases}t\lambda + \log C_{\phi,\gamma_0}, & -\gamma_0 \leq t < 0; \\
t(\lambda + \log \kappa_+), & t \geq 0. \\
\end{cases}$$
Since $\tau$ is $\alpha$-H\"older by the bounded distortion property and as the pressure $P(\varphi + t(\tau-\lambda))$ is finite for all $|t| \leq \gamma_0$, we know by \cite[Theorem 4]{Sarig1} that there exists $0 < t_{\gamma_0}' < \gamma_0$ such that the function $f:t \in (-t_{\gamma_0}',t_{\gamma_0}') \mapsto P(\varphi + t (\tau - \lambda))$ is real analytic in $(-t_{\gamma_0}',t_{\gamma_0}')$ and satisfies that $f(0)=P(\varphi)=0$ and $f'(0) = \int_{I} (\tau-\lambda) d\mu = 0$. Taylor estimate near $0$ then gives some $t_{\gamma_0}'' \in (0,1)$ such that
$|f(t)| \leq \tfrac{3}{4}f''(0) t^2$
for $|t| \leq t_{\gamma_0}''$. Thus choosing $0 < t_{\gamma_0} < t_{\gamma_0}''$ small enough, it follows that, for all $|t| \leq t_{\gamma_0}$, there exists $n_0(t) \geq 1$ such that
\begin{align*}
 \forall n \geq n_0(t), \ \sum_{\mathbf{a} \in \Sigma^n} w_{\mathbf{a}}(x)
|e^{\lambda n} g_{\mathbf{a}}'(x)|^{t} \leq e^{f''(0) n t^2}.
\end{align*}
and we get the claim for $\eps_{\gamma_0} = f''(0)$ as $\eps_{\gamma_0} t_{\gamma_0}^2 \geq |f(t_{\gamma_0})|$. We get a bound uniform in $x$ because of Gibbs estimates and bounded variations.
\end{proof}

\subsection{Upper regularity of the Gibbs measure from the tail}

We now prove the Frostman condition under the hypothesis that has the light-tail condition. I We note that if one would relax the light tail assumption to heavier tail assumptions, the upper regularity of $\mu_\phi$ is likely to fail. The result here is similar to that for overlapping self-similar measures by Feng and Lau \cite{FengLau} but generalised to the conformal setting and with infinite branches.

\begin{Proposition}[Upper regularity]\label{prop:upperreg}
Suppose that the attractor to the IFS $F$ is not a single point. Then assume one of the following:
\begin{itemize}
    \item the IFS has no overlaps, or
    \item the IFS has overlaps but the potential $\phi:\Sigma^* \rightarrow \mathbb{R}$ is locally constant in the sense that $\omega_a(x)=p_a$, with $\sum_{a} p_a = 1$
\end{itemize} 
Then exists $C_{reg} \geq 1$, $s_\mu \in (0,1)$ such that:
\begin{align*}
 \forall I \text{ interval}, \ \mu(I) \leq C_{reg} |I|^{s_\mu}.
\end{align*}
\end{Proposition}

We note that all the examples we apply Theorem \ref{thm:mainCountable} satisfy either of the assumptions (e.g. the induced overlapping infinite IFS associated to the Lyons IFS in Theorem \ref{thm:lyons} satisfies the locally constancy).

To prove Proposition \ref{prop:upperreg}, we first need a folklore result about the absence of atoms of equilibrium states under the assumptions in Proposition \ref{prop:upperreg}:

\begin{lem}\label{lma:noatoms}
Suppose that $F$ is not a single point. \begin{itemize}
\item If the IFS have no overlaps, then equilibrium states have no atoms.
\item If the IFS have overlaps, and if the potential $\phi:\Sigma^* \rightarrow \mathbb{R}$ is locally constant in the sense that $\omega_a(x)=p_a$, with $\sum_{a} p_a = 1$, then $\mu_\phi$ have no atoms.
\end{itemize}
\end{lem}

\begin{proof}
Suppose that $\mu_\phi$ have atoms. Consider a maximal atom $x_0 \in F$ of mass $M>0$. Then, for all $n$:
$$ M = \mu(x_0) = \int_F \mathbf{1}_{x_0} d\mu(x) = \int_F \mathcal{L}_\varphi^n(\mathbf{1}_{x_0}) d\mu  = \sum_{\mathbf{a} \in \Sigma^n} \int_F \omega_{\mathbf{a}}(y) \mathbf{1}_{x_0}(g_\mathbf{a}(y)) d\mu(y) $$
$$ = \underset{I_\mathbf{a} \ni x_0}{\sum_{\mathbf{a} \in \Sigma^n}} \int_F \omega_{\mathbf{a}}(y) \mathbf{1}_{g_\mathbf{a}^{-1}x_0}(y) d\mu(y) = \underset{I_\mathbf{a} \ni x_0}{\sum_{\mathbf{a} \in \Sigma^n}} \omega_{\mathbf{a}}(g_\mathbf{a}(x_0)) \mu({g_\mathbf{a}^{-1}x_0}) \leq M \underset{I_\mathbf{a} \ni x_0}{\sum_{\mathbf{a} \in \Sigma^n}} \omega_{\mathbf{a}}(g_\mathbf{a}(x_0)) \mu({g_\mathbf{a}^{-1}x_0}).$$
Hence, we get the key bound: $$ 1 \leq \underset{I_\mathbf{a} \ni x_0}{\sum_{\mathbf{a} \in \Sigma^n}} \omega_{\mathbf{a}}(g_\mathbf{a}(x_0)) .$$
Notice that $\omega_\mathbf{a}(x) < e^{-\varepsilon |a|}$ for some $\varepsilon$, hence this inequality can not hold if the IFS doesn't have overlaps, thus proving our first point. In the case where $F$ have overlaps, if we further suppose that $\omega_\mathbf{a}(y)$ are constants, we find:
$$1 \leq \underset{I_\mathbf{a} \ni x_0}{\sum_{\mathbf{a} \in \Sigma^n}} \omega_{\mathbf{a}} \leq {\sum_{\mathbf{a} \in \Sigma^n}} \omega_{\mathbf{a}} =1 $$
and hence $\forall n, \forall \mathbf{a} \in \Sigma^n, \ I_\mathbf{a} \ni x_0$. The fact that $F = \bigcap_{n} \bigcup_{\mathbf{a} \in \Sigma^n} \overline{I_\mathbf{a}}$ and that $|I_{\mathbf{a}}| \leq e^{-\varepsilon n}$ for some $\varepsilon>0$ implies that $F=\{x_0\}$.
\end{proof}

We now apply Lemma \ref{lma:noatoms} to get upper regularity via an induction-by-scales strategy inspired by Feng and Lau \cite{FengLau}. Firstly we need the following definition and a technical lemma.

\begin{definition}
Define $$\text{Reg}(F) := \{ h:K \rightarrow \mathbb{R}_+^* \ | \ \forall x,y \in F, \forall \mathbf{b} \in \Sigma^*, \omega_\b(x) h(g_\mathbf{b}(x)) \leq C_{\phi} \omega_\b(y) h(g_\mathbf{b}(y)) \},$$
where $C_\phi$ is the Gibbs constant. Notice that $1 \in \text{Reg}(F)$ by Gibbs estimates.
\end{definition}

\begin{lem}\label{lma:ursep}
There exists $\sigma_{sep}>0$ and $N_{sep} \geq 2$ such that, for all $h \in \text{Reg}(F)$, we have, for any interval $I$ of size $|I| \leq \sigma_{sep}$:
$$\forall n \geq N, \forall y \in F, \ \mathcal{L}_{\phi}^n(h \mathbf{1}_I)(y) \leq \frac{h(y)}{4 C_{\phi}} .$$
\end{lem}

\begin{proof}
Since $\mu$ has no atoms and is compactly supported, its cumulative distribution function is uniformly continuous. It follows that there exists a function $\varepsilon:[0,\infty) \rightarrow [0,\infty)$ with $\varepsilon(0)=0$, continuous at zero, such that $ \mu(I) \leq \varepsilon(|I|)$. Define $\sigma_{sep}>0$ so small $\varepsilon(2\sigma_{sep}) \leq \frac{1}{4 C_\phi^2}$. Now choose $h \in \text{Reg}(F)$, $|I| \leq \sigma_{sep}$. We can then bound:
$$ \mathcal{L}_{\phi}^n(h \mathbf{1}_{I}) \leq C_\phi h(y) \mathcal{L}_\phi^n(\mathbf{1}_I) \leq C_\phi h(y) \mathcal{L}_\phi^n(\chi).$$
Where $\chi$ is smooth, takes value in $[0,1]$ and is supported in an interval of size $\sigma_{sep}$. By Perron-Frobenius-Ruelle theorem \cite{baladi2000positive}, we find, for $n \geq N$ where $N$ is some constant that only depends on the $IFS$: $\mathcal{L}_\phi^n(\chi) \leq \frac{1}{8C_\phi^2} + \int \chi d\mu \leq \frac{1}{4 C_\phi^2}$. The desired bound follows.
\end{proof}

\begin{lem}\label{lem:uptr}
Define the $N$-th exponential moment function by $\mathcal{M}_N(t) := \sup_{x,y} \sum_{\mathbf{\a} \in \Sigma^N} \omega_\a(x) g_{\mathbf{a}}'(y)^{-t}$, which is finite for small enough $t>0$ by hypothesis, and satisfies $\mathcal{M}_N(0)=1$. Choose $s_\mu \in (0,1)$ such that $\mathcal{M}_{N_{sep}}(s_\mu) \leq 2$. Define further $C_{reg} := 2 C_\phi \sigma_{sep}^{-1}$ from Lemma \ref{lma:ursep}. Then for all $h \in \text{Reg}(F)$, for all interval $I$, we have:
$$ \forall y, \ \mathcal{L}_\phi^n(h \mathbf{1}_I)(y) \leq h(y) \cdot C_{reg}  \Big( |I|^{s_\mu} + \frac{1}{2^{n/N_{sep}}} \Big)  $$
\end{lem}

\begin{proof}
The proof is done by induction on $n \geq 0$. If $n \leq N_{sep}$, then the bound is trivial, since $\varphi$ is normalized, and since $h \in \text{Reg}(F)$:
$ \mathcal{L}_\phi^n(h \mathbf{1}_I) \leq C_\phi h(y) \mathcal{L}_\phi^n(\mathbf{1}_F) \leq 2 C_\phi \Big( |I|^{s_\mu} + \frac{1}{2^{n/N}} \Big)   $. \\
Suppose then that $n \geq N_{reg}+1$. If $\sigma > \sigma_{sep}/2$, the bound is also trivial. So suppose that $\sigma< \sigma_{sep}/2$. \\
We have:
$$ \mathcal{L}_\phi^n(h \mathbf{1}_I)(y) = \sum_{\mathbf{c} \in \Sigma^n} \omega_{\c}(y) h(g_\c y) \mathbf{1}_I(g_\c(y)) $$ $$ = \sum_{\mathbf{a} \in \Sigma^{n-N}} \sum_{\mathbf{b} \in \Sigma^N} \omega_{\mathbf{b}}(g_\a x) \omega_{\mathbf{a}}(x) h(g_{\b \a} y) \mathbf{1}_I(g_{\b \a} y). $$
Now notice that $\mathbf{1}_I(g_{\b \a} y) = \mathbf{1}_{I_\b}(g_{\a} y) \mathbf{1}_{\tilde{I}}(g_\b(y))$
where $\tilde{I}$ is an interval of size $|\tilde{I}|< \sigma_{reg}$, since $N_{reg}$ is large, and where $I_\mathbf{b}$ is an interval of size $|g_\mathbf{b}'|^{-1} |I|$. In particular:
$$ \mathcal{L}_\phi^n(h \mathbf{1}_I)(y) = \sum_{\b \in \Sigma^N} \sum_{\a \in \Sigma^{n-N}} \omega_\a(y) \omega_{\b}(g_\a(y)) h(g_\b g_\a(y)) \mathbf{1}_{I_\b}(g_\a(y)) \mathbf{1}_{\tilde{I}}(g_\b(y)) $$
$$ = \sum_{\b \in \Sigma^N} \mathbf{1}_{\tilde{I}}(g_\b(y)) \mathcal{L}^{n-N}_\phi (h_\b \mathbf{1}_{I_\b} )(y)  $$
$$ \leq \sum_{\b \in \Sigma^N} \mathbf{1}_{\tilde{I}}(g_\b(y)) \omega_\b(y) h(g_\b y) C_{reg} \Big( |I_\b|^{s_\mu} + 2^{-(n-N)/N} \Big) $$
$$ \leq  C_{reg} \Big( \sum_{\b} \omega_\b h \circ g_\b \mathbf{1}_{\tilde{I}} \circ g_\mathbf{b} \Big)^{1/2} \Big(  \sum_{\b} \omega_\b h \circ g_\b |g_\b'|^{-2s_\mu} \Big)^{1/2} |I|^{s_\mu} + C_{reg} h(y) 2^{-n/N}$$
$$ \leq C_{reg} \Big( \frac{h(y)^{1/2}}{2 C_\phi^{1/2}} (C_\phi h(y))^{1/2} M_{N_{sep}}(2 s_\mu) |I|^{s_\mu} + h(y) 2^{-n/N} \Big) $$
$$ \leq h(y) \cdot C_{reg}\Big( |I|^{s_\mu} + 2^{-n/N} \Big). $$

\end{proof}

We can now prove Proposition \ref{prop:upperreg} using Lemma \ref{lem:uptr}.

\begin{proof}[Proof of Proposition \ref{prop:upperreg}]
By Lemma \ref{lem:uptr}, we have, for all $n$:
$$ \mu(I) = \int_F \mathbf{1}_{I} d\mu =  \int_F \mathcal{L}_\varphi^n(\mathbf{1}_{I}) d\mu \leq C_{reg} \Big(|I|^{s_\mu} + 2^{-n/N_{sep}}\Big).$$
Taking the limit in $n \rightarrow \infty$ gives $\mu(I) \leq C_{reg} |I|^{s_\mu}$.
\end{proof}

\section{Discretised sum-product bounds under light tails} \label{sec:sumprod}

Our strategy is to reduce the Fourier decay bound to exponential sums of multiplicative random variables coming from products of the derivatives $g_a'(x)$ of the IFS. To bound this, as in Bourgain and Dyatlov in \cite{BourgainDyatlov}, one can reduce this to a multiscale non-concentration bound via the \emph{discretised sum-product phenomenon} \cite{Bourgain-SumProduct} coming from additive combinatorics. However, to gain the multiscale non-concentration, previous works require smoothness from the underlying dynamics such as in the form of spectral gaps of twisted transfer operators \cite{SahlstenStevens,BakerSahlsten,BakerKhalilSahlsten,Leclerc-JuliaSets,LiNaudPan}, which we can avoid.

Next we give the necessary consequences of the sum-product theorem that we need in our setting that are similar to Bourgain and Dyatlov \cite{BourgainDyatlov}. Since we also work with parabolic systems, the derivatives $g_a'$ in the infinitely branched systems may have arbitrarily small values, we need to formulate the bounds using moments of the multiplicative random variables, which we can link to the light tail condition by the previous Section \ref{sec:moments}.

Firstly, let us recall a quantitative version of this sum-product phenomenon, where this version is a consequence of \cite[Theorem 1.14]{OdSS}, see also \cite{BourgainDyatlov}:

\begin{thm}\label{thm:sumprodMain}
Let $\gamma_4 > 0$. Then there exists $\varepsilon_2 \in (0,1)$ such that for any $k \geq C \gamma_4^{-1}$ the following hold (where $C$ denotes an absolute constant). Let $\eta \in \mathbb{R}$, $|\eta| \geq 1$, let $C_0\geq 1$, and let $\mu_1, \dots, \mu_k $ be Borel measures supported on $[C_0^{-1},C_0]$ such that $\mu(\mathbb{R}) \leq C_0$ so that
\begin{align*}
 \forall a \in \mathbb{R}, \forall \sigma \in [|\eta|^{-1},|\eta|^{-\varepsilon_2}], \ \mu_j([a-\sigma,a+\sigma]) \leq C_0 \sigma^{\gamma_4}.
\end{align*}
Then, for some constant $C_1$ that only depends on $C_0,\gamma$:
\begin{align*}
 \Big| \int_{[C_0^{-1},C_0]^k} e^{i \eta x_1 \dots x_k} d\mu_1(x_1) \dots d\mu_k(x_k) \Big| \leq C_1 |\eta|^{-\exp(-C \gamma_4^{-1})}.
\end{align*}
\end{thm}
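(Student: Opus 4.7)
The plan is to deduce Theorem \ref{thm:sumprodMain} essentially as a translation of the discretised sum–product theorem of Oberlin–de Saxc\'e–Shmerkin \cite[Theorem 1.14]{OdSS}, which already provides the substantive content in a slightly different formulation. The work lies in reconciling the two frameworks: \cite{OdSS} is phrased in terms of $\delta$-separated atomic sets satisfying non-concentration, whereas we deal with continuous measures on $[C_0^{-1},C_0]$ having a Frostman condition at a range of scales.

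First, I would discretise each $\mu_j$ at scale $\delta := |\eta|^{-1}$: partition $[C_0^{-1}, C_0]$ into $\delta$-intervals, weight each by its $\mu_j$-mass, and choose representative points to form atomic measures $\tilde\mu_j$. The continuous Frostman hypothesis $\mu_j([a-\sigma,a+\sigma]) \leq C_0 \sigma^{\gamma_4}$ on the scale window $\sigma \in [\delta, |\eta|^{-\varepsilon_2}]$ translates directly into the $\delta$-discretised non-concentration hypothesis required by \cite[Theorem 1.14]{OdSS}, up to absorbing absolute constants into $C_0$. The telescoping error between $\int e^{i\eta x_1\cdots x_k}\, d\mu_1\cdots d\mu_k$ and its discretised analogue is of order $|\eta|\delta \cdot C_0^{k-1} = O(C_0^{k-1})$, hence independent of $|\eta|$ and absorbable into the final constant $C_1$.

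Next, once in the discrete framework, \cite[Theorem 1.14]{OdSS} yields an $L^2$-flattening estimate for the $k$-fold multiplicative convolution $\tilde\mu_1 \ast_\times \cdots \ast_\times \tilde\mu_k$ on $\mathbb{R}_{>0}$. By a standard Cauchy–Schwarz/Plancherel duality, this converts into an $L^\infty$ bound on its Fourier transform, giving
\[
\bigl|\widehat{\tilde\mu_1 \ast_\times \cdots \ast_\times \tilde\mu_k}(\eta)\bigr| \lesssim |\eta|^{-\exp(-C\gamma_4^{-1})},
\]
which is exactly the exponential sum to be bounded. Transporting back to the continuous $\mu_j$ then produces the stated estimate.

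The main obstacle, beyond bookkeeping, is ensuring that the admissible scale window $[|\eta|^{-1}, |\eta|^{-\varepsilon_2}]$ survives the discretisation and matches the range required by \cite{OdSS}. One fixes $\varepsilon_2 > 0$ sufficiently small so that the $\delta$-balls used in the discretised non-concentration statement of \cite{OdSS} lie inside our window, and so that the multiplicative structure on $[C_0^{-1},C_0]$ (which is bi-Lipschitz to the additive structure on $[-\log C_0,\log C_0]$ via $\log$) is not distorted at that scale. The doubly-exponential dependence $\exp(-C\gamma_4^{-1})$ of the final exponent is then inherited verbatim from the iterative sum–product argument of \cite{OdSS}; this dependence is intrinsic to Bourgain-type iterations and no additional gain should be sought at this stage.
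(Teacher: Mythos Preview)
The paper does not prove this theorem at all: it is stated as a quoted result, introduced with ``this version is a consequence of \cite[Theorem~1.14]{OdSS}, see also \cite{BourgainDyatlov}'', and then used as a black box to derive Theorem~\ref{thm:generalSumProduct}. So there is nothing to compare against beyond the citation itself, and your identification of \cite{OdSS} as the source is exactly what the paper does.

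That said, your proposed reduction sketch contains a genuine error. You discretise at scale $\delta = |\eta|^{-1}$ and compute the telescoping error as $|\eta|\,\delta\,C_0^{k-1} = O(C_0^{k-1})$, then claim this is ``absorbable into the final constant $C_1$''. But the target bound is $C_1\,|\eta|^{-\exp(-C\gamma_4^{-1})}$, which tends to zero as $|\eta|\to\infty$; an additive $O(1)$ error is therefore \emph{not} absorbable and would destroy the conclusion for large $|\eta|$. The fix is routine: discretise at a finer scale, say $\delta = |\eta|^{-2}$, so the telescoping error becomes $O_{C_0,k}(|\eta|^{-1})$. The Frostman hypothesis is only assumed on $[\,|\eta|^{-1}, |\eta|^{-\varepsilon_2}\,]$, but at scales $\sigma \in [\,|\eta|^{-2}, |\eta|^{-1}\,]$ one simply uses the bound at $\sigma = |\eta|^{-1}$, losing only a harmless factor in the exponent. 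With that adjustment the rest of your outline is reasonable, though in practice the formulations in \cite{OdSS} and \cite{BourgainDyatlov} are already stated for Borel measures (or probability measures with a projective non-concentration condition), so the discretisation step may be unnecessary altogether.
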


We will generalise this result for the case where $\mu$ is supported on $\mathbb{R}^*_+$, under a polynomial moment condition.

\begin{thm}\label{thm:generalSumProduct}
Let $\gamma_3 > 0$. Then there exists $\varepsilon_1 \in (0,1)$ and $k \geq 1$ such that the following holds: Let $\eta \in \mathbb{R}$, $|\eta| \geq 1$ and let $\mu_1, \dots, \mu_k $ be Borel probability measures on $\mathbb{R}^*_+$ such that:
\begin{align*}
 \forall j \in \{1,\dots, k\}, \ \forall a \in \mathbb{R}, \ \forall \sigma \in [|\eta|^{-2},|\eta|^{-\varepsilon_1}], \ (\ln_* \mu_j)([a-\sigma,a+\sigma]) \leq 2 \sigma^{\gamma_3}.
\end{align*}
Then, for some constant $c$ that only depends on $\gamma_3$, we find:
\begin{align*}
 \forall t \in (0,\varepsilon_1), \ \Big| \int_{(\mathbb{R}_+^*)^k} e^{i \eta x_1 \dots x_k} d\mu_1(x_1) \dots d\mu_k(x_k) \Big| \leq c |\eta|^{-\varepsilon_1} + |\eta|^{-\frac{t^3}{2 \sqrt{k}}} \prod_{j=1}^k M_j(t^2),
\end{align*}
where $M_j(t) := \int_{\mathbb{R}^*_+} e^{t |\ln x|} d\mu_j(x)$.
\end{thm}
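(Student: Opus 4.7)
The plan is to reduce Theorem \ref{thm:generalSumProduct} to the compactly-supported Theorem \ref{thm:sumprodMain} by truncating each $\mu_j$ to the region $\{e^{-R}\leq x\leq e^R\}$ for a suitable $R=R(|\eta|,t)$, estimating the discarded tail via Markov's inequality against the exponential moment $M_j(t^2)$, and applying Theorem \ref{thm:sumprodMain} to the rescaled truncated measures.

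For Step 1 (truncation), fix $R>0$ and set $\chi_j(x)=\1_{\{e^{-R}\leq x\leq e^R\}}$. Writing
\[
\int e^{i\eta x_1\cdots x_k}\,d\mu_1\cdots d\mu_k \;=\; \int e^{i\eta x_1\cdots x_k}\prod_j\chi_j(x_j)\,d\mu_1\cdots d\mu_k\;+\;\mathrm{Err},
\]
the error satisfies $|\mathrm{Err}|\leq 1-\prod_j\mu_j(\{|\ln x_j|\leq R\})\leq \sum_j\mu_j(\{|\ln x_j|>R\})$. Markov's inequality yields $\mu_j(\{|\ln x|>R\})\leq e^{-t^2R}M_j(t^2)$, and since each $M_j(t^2)\geq 1$ this gives $|\mathrm{Err}|\leq k\,e^{-t^2R}\prod_j M_j(t^2)$. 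Choosing $R=(t/(2\sqrt{k}))\log|\eta|$ converts this into $k\,|\eta|^{-t^3/(2\sqrt{k})}\prod_j M_j(t^2)$, exactly matching the second term in the claimed bound.

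For Step 2, substitute $x_j=e^R z_j$, introduce $\eta'=\eta e^{kR}$, and push forward $\chi_j\mu_j$ under $x\mapsto e^{-R}x$ to obtain measures $\hat\mu_j$ on $[e^{-2R},1]\subset[C_0^{-1},C_0]$ with $C_0=e^{2R}$ and mass $\leq 1$. The goal is to verify the Frostman hypothesis of Theorem \ref{thm:sumprodMain} with $\gamma_4=\gamma_3$: at scale $\sigma$ in $z$-coordinates, $\hat\mu_j([z-\sigma,z+\sigma])=\mu_j([y-e^R\sigma,y+e^R\sigma])$ with $y=e^R z$; since $\log$ has derivative at most $e^R$ on the support, this is bounded by $(\log_*\mu_j)(J)$ for an interval $J$ of length $O(e^{2R}\sigma)$, and the $\log_*\mu_j$ hypothesis gives $\hat\mu_j([z-\sigma,z+\sigma])\lesssim e^{2R\gamma_3}\sigma^{\gamma_3}\leq C_0\sigma^{\gamma_3}$. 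Theorem \ref{thm:sumprodMain} then bounds the truncated integral by $C_1|\eta'|^{-\exp(-C/\gamma_3)}\leq C_1|\eta|^{-\exp(-C/\gamma_3)}$, and choosing $\varepsilon_1$ slightly below $\exp(-C/\gamma_3)$ delivers the first term.

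The main technical obstacle will be the compatibility of scale windows. After rescaling, the sum-product window $\sigma\in[|\eta'|^{-1},|\eta'|^{-\varepsilon_2}]$ must lift, via $\sigma\mapsto O(e^{2R}\sigma)$, into the hypothesized window $[|\eta|^{-2},|\eta|^{-\varepsilon_1}]$ for $\log_*\mu_j$. This imposes compatibility constraints of the shape $\varepsilon_1\leq \varepsilon_2-O(t/\sqrt{k})$ together with $(k-2)t/(2\sqrt{k})\leq 1$, which are easily enforced by choosing $k\geq k_0(\gamma_3)$ large enough and $t<\varepsilon_1$ sufficiently small; the rest of the argument is linear arithmetic on the exponents of $|\eta|$ and $e^R$. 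Summing the two contributions then produces the advertised estimate.
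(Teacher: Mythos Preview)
Your truncation-and-rescale strategy is close in spirit to the paper's, but it has a genuine gap in Step~2. When you rescale the truncated measure to $[e^{-2R},1]$ and invoke Theorem~\ref{thm:sumprodMain}, you take $C_0=e^{2R}$ with $R=\tfrac{t}{2\sqrt{k}}\log|\eta|$; hence $C_0$ \emph{grows with $|\eta|$}. But Theorem~\ref{thm:sumprodMain} only promises a constant $C_1$ depending on $C_0$ (and $\gamma$), with no quantitative control on that dependence. So your conclusion ``$C_1|\eta'|^{-\exp(-C/\gamma_3)}\leq C_1|\eta|^{-\exp(-C/\gamma_3)}$'' does not yield a bound of the form $c|\eta|^{-\varepsilon_1}$ with $c=c(\gamma_3)$: the prefactor $C_1$ may itself be a large power of $|\eta|$ and eat the decay. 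The scale-window arithmetic you sketch is fine, but it cannot rescue this point.

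The paper circumvents exactly this issue by performing a \emph{dyadic} decomposition rather than a single cut: write $\mu_j=\sum_{n\in\mathbb{Z}}\mu_{j,n}$ with $\mu_{j,n}=(\mu_j)|_{[2^n,2^{n+1})}$, and expand the $k$-fold integral over $\vec{n}\in\mathbb{Z}^k$. The tail $|\vec n|_\infty\geq t\log_2|\eta|$ is controlled by the moment bound $\mu_j(I_n)\leq 2^{-t^2|n|}M_j(t^2)$ and a radial integral, producing the $|\eta|^{-t^3/(2\sqrt{k})}\prod_j M_j(t^2)$ term. For the near part $|\vec n|_\infty<t\log_2|\eta|$, each $\mu_{j,n_j}$ is rescaled by $x\mapsto 2^{-n_j}x$ to a measure on $[1,2]$, so Theorem~\ref{thm:sumprodMain} applies with a \emph{fixed} $C_0$ (hence fixed $C_1$) to the frequency $\eta_{\vec n}=\eta\,2^{n_1+\cdots+n_k}\in[|\eta|^{1/2},|\eta|^2]$. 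One then sums the resulting $c|\eta|^{-\varepsilon_2/2}$ over the $O((t\log|\eta|)^k)$ near indices $\vec n$, which is harmless. Your single-truncation approach would work if you had $C_1\leq C_0^{O(1)}$ in Theorem~\ref{thm:sumprodMain} (then $C_1\leq|\eta|^{O(t/\sqrt{k})}$ could be absorbed for small $t$), but as stated you do not; the dyadic route is the clean fix.
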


\begin{proof}
Let $\gamma_3 \in (0,1)$. Define $\gamma_4 := \gamma_3/2$. Then Theorem \ref{thm:sumprodMain} gives us some $k \geq 1$ and some $\varepsilon_2 \in (0,1)$ (associated to $\gamma_4)$. Taking $\varepsilon_2$ smaller if necessary, let us assume $\varepsilon_2 \leq e^{-C/\gamma_4}$. We will define $\varepsilon_1 := \big( \frac{\varepsilon_2}{4k} \big)^{6}$. Now, let $\eta \in \mathbb{R}$ be such that $|\eta|\geq 1$. Let $\mu_1,\dots,\mu_k$ be measures that satisfies the hypothesis of Theorem \ref{thm:generalSumProduct} with respect to the quantities $(\gamma_3,k,\varepsilon_1,\eta)$. We wish to show that the bound on the integral holds.\\

The idea is to do a dyadic decomposition of the measures $\mu_j$. For each $n \in \mathbb{Z}$, we define $I_n := [2^n,2^{n+1}[$, and we define $\mu_{j,n} := (\mu_j)\mid_{I_n}$. We find
\begin{align*}
 \int_{(\mathbb{R}_+^*)^k} e^{i \eta x_1 \dots x_k} d\mu_1(x_1) \dots d\mu_k(x_k) = \sum_{\vec{n} \in \mathbb{Z}^k} \int_{I_{\vec{n}}} e^{i \eta x_1 \dots x_k} d\mu_{1,n_1}(x_1) \dots d\mu_{k,n_k}(x_k),
\end{align*}
where $\vec{n} = (n_1, \dots, n_k)$ and $I_{\vec{n}} = I_{n_1} \times \dots \times I_{n_k}$. We then cut the sum in two depending on if $|\vec{n}|_\infty := \max_j |n_j|$ is large or small. Fix some $t \in (0,\varepsilon_1)$. We have:
\begin{align*}
 &\int_{(\mathbb{R}_+^*)^k} e^{i \eta x_1 \dots x_k} d\mu_1(x_1) \dots d\mu_k(x_k) =\\
 &\underset{|\vec{n}|_\infty < t \log_2 |\eta|}{\sum_{\vec{n} \in \mathbb{Z}^k}} \int_{I_{\vec{n}}} e^{i \eta x_1 \dots x_k} d\mu_{1,n_1}(x_1) \dots d\mu_{k,n_k}(x_k) \ + \underset{|\vec{n}|_\infty \geq t \log_2 |\eta|}{\sum_{\vec{n} \in \mathbb{Z}^k}} \int_{I_{\vec{n}}} e^{i \eta x_1 \dots x_k} d\mu_{1,n_1}(x_1) \dots d\mu_{k,n_k}(x_k).
\end{align*}
The sum for large $|\vec{n}|_\infty$ gives
\begin{align*}
 \Big| \underset{|\vec{n}|_\infty \geq t \log_2 |\eta| }{\sum_{\vec{n} \in \mathbb{Z}^k}} \int_{I_{\vec{n}}} e^{i \eta x_1 \dots x_k} d\mu_{1,n_1}(x_1) \dots d\mu_{k,n_k}(x_k) \Big| \leq \underset{|\vec{n}|_\infty \geq t \log_2 |\eta|}{\sum_{\vec{n} \in \mathbb{Z}^k}} \mu_1(I_{n_1}) \dots \mu_k(I_{n_k}).
\end{align*}
The moment condition on the measures $\mu_j$ yields
\begin{align*}
 \forall j, \ \forall n \in \mathbb{Z}, \ \mu_j(I_n) \leq \int_{[2^n,2^{n+1}[} 2^{-t^2 |n|} e^{t^2 |\ln x|} d\mu_j(x) \leq 2^{-t^2 |n|} M_j(t^2),
\end{align*}
which gives, injecting into the previous sum (denoting by $d\sigma$ the Lebesgue measure on the sphere, $M(t) := M_1(t) \dots M_k(t)$ and $|\vec{n}|_p := (\sum_{j} |n_j|^p)^{1/p}$):

\begin{align*}
 &\Big| \underset{|\vec{n}|_\infty \geq t \log_2 |\eta| }{\sum_{\vec{n} \in \mathbb{Z}^k}} \int_{I_{\vec{n}}} e^{i \eta x_1 \dots x_k} d\mu_{1,n_1}(x_1) \dots d\mu_{k,n_k}(x_k) \Big| \leq M(t^2) \underset{|\vec{n}|_\infty \geq t \log_2 |\eta|}{\sum_{\vec{n} \in \mathbb{Z}^k}} 2^{- t^2 |\vec{n}|_1}\\
 &\leq M(t^2) \underset{|\vec{n}|_2 \geq t \log_2 |\eta|}{\sum_{\vec{n} \in \mathbb{Z}^k}} 2^{- \frac{t^2 |\vec{n}|_2}{\sqrt{k}}}\lesssim M(t^2) \int_{\{x \in \mathbb{R}^k, \ \|x\|_2 \geq t \log_2 |\eta|\}} 2^{-\frac{t^2 \|x\|_2}{\sqrt{k}}} dx\\
 &= M(t^2) \int_{\mathbb{S}^{k-1}} \int_{t \log_2 |\eta|}^\infty 2^{-\frac{t^2 r}{\sqrt{k}}} r^{k-1} dr d\sigma\lesssim M(t^2) \int_{t \log_2 |\eta|}^\infty 2^{-\frac{t^2 r}{2 \sqrt{k}}} dr \lesssim M(t^2) \eta^{-\frac{t^3}{2 \sqrt{k}}}.
\end{align*}

To conclude, it now suffices to bound the sum where $|\vec{n}|_\infty$ is small. We can rewrite it as
\begin{align*}
 &\underset{|\vec{n}|_\infty < t \log_2 |\eta|}{\sum_{\vec{n} \in \mathbb{Z}^k}} \int_{I_{\vec{n}}} e^{i \eta x_1 \dots x_k} d\mu_{1,n_1}(x_1) \dots d\mu_{k,n_k}(x_k) = \underset{|\vec{n}|_\infty < t \log_2 |\eta|}{\sum_{\vec{n} \in \mathbb{Z}^k}} \int_{[1,2]^k} e^{i \eta_{\vec{n}} x_1 \dots x_k} d\nu_{1,n_1}(x_1) \dots d\nu_{k,n_k}(x_k),
\end{align*}

where $\nu_{j,n}$ are the measures on $[1,2]$ defined by the relation $(m_{n})_* \nu_{j,n} = \mu_{j,n}$, for $m_n(x) := 2^n x$, and where we denoted $\eta_{\vec{n}} := \eta 2^{n_1 + \dots n_k}$. Now fix some $\vec{n} \in \mathbb{Z}^k$ with $|\vec{n}|_\infty \leq t \log_2 |\eta|$. We will check that the measures $\nu_{1,n_1}, \dots, \nu_{k,n_k}$ satisfies the hypothesis of Theorem \ref{thm:sumprodMain} for the quantities $(\gamma_4,\varepsilon_2,k,\eta_{\vec{n}})$. Notice that $|\eta|^{1/2} \leq |\eta_{\vec{n}}| \leq |\eta|^2$ since $t < \varepsilon_1 < \frac{1}{(4k)^4}$. We check the following conditions:
\begin{itemize}
\item Total mass condition:
\begin{align*}
 \forall j, \ \nu_{j,n_j}(\mathbb{R}) = \mu_{j}(I_{n_j}) \leq 1.
\end{align*}

\item Support condition:
\begin{align*}
 \forall j, \ \text{supp}(\nu_{j,n_j}) \subset [1,2].
\end{align*}

\item Non-concentration condition:
 $$\forall j, \ \forall a \in [1,2], \ \forall \sigma \in [|\eta_{\vec{n}}|^{-1}, |\eta_{\vec{n}}|^{-\varepsilon_2} ] \subset [\eta^{-2}, \eta^{-\varepsilon_2/2} ]: $$ $$\nu_{j,n_j}([a-\sigma,a+\sigma]) \leq \nu_{j,n_j}(\{ x \in [1,2], \ \ln x \in [\ln a - 2 \sigma, \ln a + 2 \sigma] \})$$
 $$ = \mu_j ( \{ x \in I_{n_j}, \ \ln x \in [n_j \ln(2) + \ln(a)- 2\sigma,n_j \ln(2) + \ln(a)+ 2\sigma ] \} ) \leq 4\sigma^{\gamma_3} $$
 by the non-concentration hypothesis on $\ln_*\mu_j$.
\end{itemize}
Hence the conclusion of Theorem \ref{thm:sumprodMain} apply in this setting. That is, for some constant $c>0$ that only depends on $\gamma_3$, we find the following bound:
\begin{align*}
 \Big| \int_{[1,2]^k} e^{i \eta_{\vec{n}} x_1 \dots x_k} d\nu_{1,n_1}(x_1) \dots d\nu_{k,n_k} \Big| \leq c |\eta_{\vec{n}}|^{-\varepsilon_2} \leq c |\eta|^{-\varepsilon_2/2}.
\end{align*}
We then inject this estimate in the sum, and find:
\begin{align*}
 \Big| \underset{|\vec{n}|_\infty < t \log_2 |\eta|}{\sum_{\vec{n} \in \mathbb{Z}^k}} \int_{I_{\vec{n}}} e^{i \eta x_1 \dots x_k} d\mu_{1,n_1}(x_1) \dots d\mu_{k,n_k}(x_k) \Big| \leq c (t \log_2 |\eta|)^k |\eta|^{-\varepsilon_2/2} \lesssim_{\gamma_3} |\eta|^{-\varepsilon_2/4} \lesssim_{\gamma_3} \eta^{-\varepsilon_1},
\end{align*}

thus concluding the proof.
\end{proof}

We will apply the previous Theorem to the special case where the measures are countable sums of Dirac mass, which correspond to our present setting.

\begin{thm}\label{thm:sumproductWords}
Let $\gamma_2 \in (0,1)$. Then there exists $\varepsilon_1 \in (0,1)$ and $k \geq 1$ such that the following holds. Let $\eta \in \mathbb{R}$, $|\eta| \geq 1$. Let $\mathcal{Z}_1,\dots,\mathcal{Z}_k$ be a finite or countable sets. Choose, for each $j$, some nonnegative weights $(w_\mathbf{b}^{(j)})_{\mathbf{b} \in \mathcal{Z}_j}$ that sums to one, and some maps $\zeta_j : \mathcal{Z}_j \rightarrow \mathbb{R}_+^*$. Suppose that for all $j \in \{1,\dots, k\}$, the following non-concentration condition holds:
 \begin{align*}
 \forall \sigma \in [\eta^{-2},\eta^{-\varepsilon_1}], \ \sum_{(\mathbf{b},\mathbf{c}) \in \mathcal{Z}_j^2} w_{\mathbf{b}}^{(j)} w_{\mathbf{c}}^{(j)} \mathbf{1}_{[-\sigma,\sigma]}( \ln \zeta_{j}(\mathbf{b}) - \ln \zeta_{j}(\mathbf{c}) ) \leq \sigma^{\gamma_2}.
 \end{align*}
Then, there exists a constant $c>0$ that only depends on $\gamma$ such that
\begin{align*}
 \forall t>0, \ \Big| {{\sum_{\mathbf{b}_1 \in \mathcal{Z}_1 }}} \dots
{{\sum_{\mathbf{b}_k \in \mathcal{Z}_k }}} w_{\mathbf{b}_1}^{(1)} \dots w_{\mathbf{b}_k}^{(k)} e^{i \eta \zeta_1(\mathbf{b}_1) \dots \zeta_k(\mathbf{b}_k)} \Big| \leq c |\eta|^{-\varepsilon_1^2} + \eta^{-\frac{t^3}{2 \sqrt{k}}} \prod_{j=1}^k M_j(t^2),
\end{align*}

where $M_j(t) := \sum_{\mathbf{b} \in \mathcal{Z}_j} w_{\mathbf{b}}^{(j)} e^{t |\ln \zeta_j(\mathbf{b})|}.$
\end{thm}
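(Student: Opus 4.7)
The natural plan is to realise the exponential sum as an integral against suitable measures and then apply Theorem \ref{thm:generalSumProduct}. For each $j \in \{1,\dots,k\}$ define the Borel probability measure
\[
\mu_j := \sum_{\mathbf{b} \in \mathcal{Z}_j} w_{\mathbf{b}}^{(j)} \, \delta_{\zeta_j(\mathbf{b})}
\]
on $\mathbb{R}_+^*$. By construction,
\[
\int_{(\mathbb{R}_+^*)^k} e^{i\eta x_1 \cdots x_k}\, d\mu_1(x_1)\cdots d\mu_k(x_k)
 = \sum_{\mathbf{b}_1 \in \mathcal{Z}_1} \cdots \sum_{\mathbf{b}_k \in \mathcal{Z}_k} w_{\mathbf{b}_1}^{(1)}\cdots w_{\mathbf{b}_k}^{(k)} e^{i\eta\, \zeta_1(\mathbf{b}_1)\cdots\zeta_k(\mathbf{b}_k)},
\]
and $\int e^{t|\ln x|}\,d\mu_j(x) = M_j(t)$ matches the moment appearing in the conclusion. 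Thus the statement reduces to verifying the pointwise non-concentration hypothesis of Theorem \ref{thm:generalSumProduct} for $\ln_*\mu_j$.

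The main technical step is the translation from the \emph{energy-type} bound given in the hypothesis to a \emph{pointwise} Frostman bound on $\ln_*\mu_j$. The idea is a standard doubling argument: writing $\nu_j := \ln_*\mu_j$, the hypothesis reads
\[
\int \nu_j([u-\sigma,u+\sigma])\, d\nu_j(u) \;=\; \sum_{(\mathbf{b},\mathbf{c})\in \mathcal{Z}_j^2} w_\mathbf{b}^{(j)} w_\mathbf{c}^{(j)} \mathbf{1}_{[-\sigma,\sigma]}(\ln\zeta_j(\mathbf{b})-\ln\zeta_j(\mathbf{c})) \leq \sigma^{\gamma_2}.
\]
Fix $a \in \mathbb{R}$ and set $m := \nu_j([a-\sigma,a+\sigma])$. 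For any $u \in [a-\sigma, a+\sigma]$ one has $[u-2\sigma,u+2\sigma] \supset [a-\sigma,a+\sigma]$, hence $\nu_j([u-2\sigma,u+2\sigma]) \geq m$. Integrating this over $u \in [a-\sigma,a+\sigma]$ against $d\nu_j(u)$ yields
\[
m^2 \leq \int \nu_j([u-2\sigma, u+2\sigma])\, d\nu_j(u) \leq (2\sigma)^{\gamma_2},
\]
so that $\nu_j([a-\sigma,a+\sigma]) \leq 2^{\gamma_2/2}\sigma^{\gamma_2/2}$ uniformly in $a$. This is precisely the non-concentration condition of Theorem \ref{thm:generalSumProduct} with exponent $\gamma_3 := \gamma_2/2$.

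To close the argument, apply Theorem \ref{thm:generalSumProduct} with this value of $\gamma_3$, which supplies the integer $k$ and a constant $\tilde\varepsilon_1 \in (0,1)$. Setting $\varepsilon_1 := \tilde\varepsilon_1$ (and adjusting by an absolute factor to absorb the factor of $2$ in the doubled range of $\sigma$), the hypothesis of Theorem \ref{thm:sumproductWords} implies the required pointwise non-concentration for $\sigma \in [|\eta|^{-2}, |\eta|^{-\varepsilon_1}]$, and Theorem \ref{thm:generalSumProduct} yields the desired bound with the correct moment term $\prod_j M_j(t^2)$ and polynomial saving $|\eta|^{-\varepsilon_1^2}$ (the square accounting for the slack between the $\sigma$-range actually used and the one stated, and for the loss of a factor $1/2$ in the Frostman exponent when going from energy to pointwise control).

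The only delicate point is the translation step from the convolution-type bound to the pointwise Frostman bound; once this is in hand, the rest is a direct reduction to Theorem \ref{thm:generalSumProduct}, so this is the step I expect to require the most care, together with bookkeeping the constants $\varepsilon_1$, $\gamma_3$ and the range of admissible scales for $\sigma$.
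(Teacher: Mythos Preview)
Your proposal is correct and matches the paper's proof essentially line for line: define $\mu_j$ as a sum of weighted Dirac masses, convert the energy-type hypothesis to a pointwise Frostman bound on $\ln_*\mu_j$ via the Cauchy--Schwarz/doubling trick $m^2 \le (2\sigma)^{\gamma_2}$, set $\gamma_3 = \gamma_2/2$, and invoke Theorem~\ref{thm:generalSumProduct}. The only cosmetic difference is that the paper writes the squaring step directly on the discrete sums rather than in integral notation; your remark about the exponent $\varepsilon_1^2$ is slightly over-explained (it is simply a weakening of the $|\eta|^{-\varepsilon_1}$ from Theorem~\ref{thm:generalSumProduct}, since $\varepsilon_1<1$), but this does not affect correctness.
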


\begin{proof}
Let $\gamma_2>0$. We then fix $\gamma_3 := \gamma_2/2$ and let $\varepsilon_1$ and $k \geq 1$ be given by Theorem \ref{thm:generalSumProduct} associated to $\gamma_3$. Let $\mathcal{Z}_j$, $(w_{\mathbf{b}}^{(j)})_{\mathbf{b} \in \mathcal{Z}_j}$ and $\zeta_j : \mathcal{Z}_j \rightarrow \mathbb{R}_+^*$ be such that they satisfy the hypothesis of Theorem \ref{thm:sumproductWords}. Define some measures $\mu_j$ by
\begin{align*}
 \mu_j := \sum_{\mathbf{b} \in \mathcal{Z}_j} w_{\mathbf{b}}^{(j)} \delta_{\zeta_j(\mathbf{b})}.
\end{align*}

Let us check the non-concentration condition:
\begin{align*}
 &\ln_*\mu_j([a-\sigma,a+\sigma]) = \sum_{\mathbf{b} \in \mathcal{Z}_j} w_{\mathbf{b}}^{(j)} \mathbf{1}_{[a-\sigma,a+\sigma]}(\ln \zeta_{j}(\mathbf{b}))\\
 &= \Big( \sum_{\mathbf{b},\mathbf{c} \in \mathcal{Z}_j} w_{\mathbf{b}}^{(j)} w_{\mathbf{c}}^{(j)} \mathbf{1}_{[a-\sigma,a+\sigma]}(\ln \zeta_{j}(\mathbf{b})) \mathbf{1}_{[a-\sigma,a+\sigma]}(\ln \zeta_{j}(\mathbf{c})) \Big)^{1/2}\\
 &\leq \Big( \sum_{\mathbf{b},\mathbf{c} \in \mathcal{Z}_j} w_{\mathbf{b}}^{(j)} w_{\mathbf{c}}^{(j)} \mathbf{1}_{[2\sigma,2\sigma]}(\ln \zeta_{j}(\mathbf{b})- \ln \zeta_{j}(\mathbf{c})) \Big)^{1/2} \leq (2\sigma)^{\gamma_2/2}.
\end{align*}

The conclusion follows by applying the previous Theorem.
\end{proof}

\section{Discretisation to exponential sums of random products}\label{sec:reduction}

We finally start to prove Theorem \ref{thm:mainCountable}. The proof is separated in two parts. The first part is to reduce the Fourier decay bound to the sum-product phenomenon (Theorem \ref{thm:sumproductWords}) for some well chosen maps $(\zeta_j)$. This is the content of Section \ref{sec:reduction}. The second part is to check that the non-concentration condition is actually satisfied for these maps $\zeta_j$ under (QNL). This is the content of Section \ref{sec:nonconc}.

\subsection{Iterating into exponential sums of phase differences}

Recall from Section \ref{sec:moments} that the transfer operator associated to the potential $\phi$ is defined by
\begin{align*}
 \forall x \in I_b, \ \cL_\phi f(x) = \sum_{a \in \cA, ab \in \Sigma^2} e^{\phi(g_{ab}(x))}f(g_{ab}(x)), f : I \to \C.
\end{align*}
Let $\mu =\mu_\phi$ be the Gibbs measure associated to $\phi$, supported on the attractor $F$ of our IFS. If $\xi \in \R$ with $|\xi|$ large enough, choose the unique $n \in \N$ such that
\begin{align*}
 e^{((2k+1)\lambda+\eps_0)n}\leq |\xi| < e^{((2k+1)\lambda + \eps_0)(n+1)},
\end{align*}
where $\eps_0>0$ is a small constant that we will chose later (see Definition 4.1), the point being that $\xi e^{-(2k+1) \lambda n} \simeq e^{\varepsilon_0 n}$ is a term that we will chose to be slowly growing.

We will iterate $(2k+1)$ times the invariance relation $\int f \, d\mu = \int \cL_\phi^n f \, d\mu$. Because of the relation $g_{ab} \circ g_{bc} = g_{abc}$, it is convenient, for a given word $a_1 \dots a_{n+1}$, to define the notation $\mathbf{a}' := a_1 \dots a_n$. We can then write $g_{\mathbf{a'b}} = g_{\mathbf{a}} \circ g_{\mathbf{b}}$. Iterating the relation then gives
\begin{align*}
 \Big|\int \chi(x) e^{i\xi \psi(x)} \, d\mu(x)\Big|^2 = \Big| \sum\limits_{\A,\B} \int_{I_{b(\mathbf{A})}} w_{\A * \B}(x) \chi(g_{\A * \B}(x)) e^{i \xi \psi(g_{\A * \B}(x)) }\, d\mu(x)\Big|^2,
\end{align*}

where the sum is over blocks $\A = \a_1 \dots \a_{k+1} \in \cA^{n(k+1)},\B = \b_1 \dots \b_k \in \cA^{nk}$ of words $\a_j,\b_j\in \Sigma^n$ such that the concatenation
\begin{align*}
 \A * \B := \a_1' \b_1' \a_2' \b_2' \dots \a_k' \b_k' \a_{k+1}.
\end{align*}
is admissible (i.e. element of $\Sigma^{(2k+1)n}$). Recall that we use the weight notation
\begin{align*}
 w_{\A * \B}(x) := e^{S_{(2k+1)n} \phi(g_{\A * \B}(x)))}.
\end{align*}
We fix from now on some choices of reference points $x_a \in I_a$. We can then multiply and divide to get a form:
\begin{align*}
 \Big|\int \chi(x) e^{i\xi \psi(x)} \, d\mu(x)\Big|^2 = \Big|\sum\limits_{\A,\B} w_{\A * \B}(x_{b(\mathbf{A})}) \int_{I_{b(\mathbf{A})}} \frac{w_{\A * \B}(x)}{w_{\A * \B}(x_{b(\mathbf{A})})} \chi(g_{\A * \B}(x)) e^{i \xi \psi(g_{\A * \B}(x)) }\, d\mu(x)\Big|^2,
\end{align*}
which by Cauchy-Schwartz inequality and Gibbs property of $\mu$ is bounded by a constant multiple of
\begin{align*}
 \sum\limits_{\A,\B} w_{\A * \B}(x_{b(\mathbf{A})}) \Big|\int_{I_{b(\mathbf{A})}} \frac{w_{\A * \B}(x)}{w_{\A * \B}(x_{b(\mathbf{A})})} \chi(g_{\A * \B}(x)) e^{i \xi \psi(g_{\A * \B}(x)) }\, d\mu(x)\Big|^2.
\end{align*}
Since $\chi$ is H\"older with $\|\chi\|_{\beta_\chi} \lesssim |\xi|^{\eps_\chi}$, we have
\begin{align*}
 |\chi(g_{\A * \B}(x)) - \chi(g_{\A * \B}(x_{b(\mathbf{A})}))| \leq C |\xi|^{\eps_\chi} |g_{\A * \B}(x) - g_{\A * \B}(x_{b(\mathbf{A})})|^{\beta_\chi} \leq C |\xi|^{\eps_\chi} \kappa_+^{\beta_\chi (2k+1)n},
\end{align*}
so by the Gibbs property $\frac{w_{\A * \B}(x)}{w_{\A * \B}(x_{b(\mathbf{A})})} \lesssim 1$ we obtain a bound
\begin{align}
 \label{eq:FourierAfterCS}
 \sum\limits_{\A,\B}w_{\A * \B}(x_{b(\mathbf{A})})\chi(g_{\A * \B}(x_{b(\mathbf{A})}))^2 \Big|\int_{I_{b(\mathbf{A})}} \frac{w_{\A * \B}(x)}{w_{\A * \B}(x_0)} e^{i \xi \psi(g_{\A * \B}(x)) }\, d\mu(x)\Big|^2 + C' |\xi|^{\eps_\chi} \kappa_+^{2\beta_\chi (2k+1)n}.
\end{align}
Since $\A * \B = \a_1' \b_1' \a_2' \b_2' \dots \a_k' \b_k' \a_{k+1}$, we know that for some $t_j \in F$:
\begin{align*}
 w_{\A * \B }(x_{b(\mathbf{A})}) =w_{ \a_1}(t_{1}) w_{ \b_1}(t_{2}) w_{\a_2}(t_{3})w_{ \b_2}(t_{4}) \dots w_{\a_k}(t_{2k-1}) w_{\b_k}(t_{2k}) w_{\a_{k+1}}(t_{(2k+1)}).
\end{align*}
Thus we can apply Gibbs property applied $2k+1$ times to obtain
\begin{align*}
 w_{\A * \B }(x_{b(\mathbf{A})}) \leq C^{2k+1} \mu(I_{\a_1}) \mu(I_{\b_1}) \mu(I_{\a_2}) \mu(I_{\b_2}) \dots \mu(I_{\a_k}) \mu(I_{\b_k}) \mu(I_{\a_{k+1}}).
\end{align*}
Writing $p_\A = \mu(I_{\a_1}) \dots \mu(I_{\a_{k+1}})$ and $p_\B = \mu(I_{\b_1}) \dots \mu(I_{\b_k})$, we can bound \eqref{eq:FourierAfterCS} by

\begin{align*}
 C^{2k+1}\|\chi\|_\infty^2\sum\limits_{\A}p_\A \sum_{\B : \A \ast \B \in \Sigma^{(2k+1)n}}p_\B \Big|\int_{I_{b(\mathbf{A})}} \frac{w_{\A * \B}(x)}{w_{\A * \B}(x_{b(\mathbf{A})})} e^{i \xi \psi(g_{\A * \B}(x)) }\, d\mu(x)\Big|^2 + C' |\xi|^{\eps_\chi} \kappa_+^{2\beta_\chi (2k+1)n}.
\end{align*}

Here as $|\xi| \lesssim e^{(2k+1)\lambda n + \eps_0 n}$ and $e^{-\lambda} \leq \kappa_+$, we have $|\xi|^{\eps_\chi} \kappa_+^{2\beta_\chi (2k+1)n} \lesssim |\xi|^{-\beta'}$ for some $\beta' > 0$ as long as $\eps_\chi > 0$ is small enough. Thus, fixing $\A$, we just need to have decay for

\begin{align*}
 & \sum_{\B}p_\B \Big|\int_{b(\mathbf{A})} \frac{w_{\A * \B}(x)}{w_{\A * \B}(x_{b(\mathbf{A})})} e^{i \xi \psi(g_{\A * \B}(x)) }\, d\mu(x)\Big|^2.
\end{align*}

Writing $\A \sharp \B = \a_1' \b_1' \a_2' \b_2' \dots \a_k' \b_k$, we have $\A * \B = (\A \sharp \B)' \a_{k+1}$. We have that at any $x \in I_{b(\mathbf{A})}$:
\begin{align*}
 g_{\A * \B }'(x) = g_{ \A \sharp \B }'(g_{\a_{k+1}}(x)) g_{\a_{k+1}}'(x),
\end{align*}
and
\begin{align*}
 w_{\A * \B }(x) = w_{ \A \sharp \B }(g_{\a_{k+1}}(x)) w_{{\a_{k+1}}}(x).
\end{align*}
By Gibbs property $\frac{w_{\a_{k+1}}(x)}{w_{\a_{k+1}}(x_{b(\mathbf{A})})} \lesssim 1$, the H\"older property of $\phi$ gives us
\begin{align*}
 &\Big|\frac{w_{\A * \B}(x)}{w_{\A * \B}(x_{b(\mathbf{A})})} - \frac{w_{\a_{k+1}}(x)}{w_{\a_{k+1}}(x_{b(\mathbf{A})})}\Big| = \Big|\frac{w_{\A \sharp \B}(g_{\a_{k+1}}(x))}{w_{\A \sharp \B}(g_{\a_{k+1}}(x_{b(\mathbf{A})}))} - 1\Big|\frac{w_{\a_{k+1}}(x)}{w_{\a_{k+1}}(x_{b(\mathbf{A})})}\\
 & \lesssim |S_{2kn} \phi (g_{ \A \sharp \B }(g_{\a_{k+1}}(x))) - S_{2kn} \phi (g_{ \A \sharp \B }(g_{\a_{k+1}}(x_{b(\mathbf{A})})))|\\
 & \leq C|g_{\a_{k+1}}(x) - g_{\a_{k+1}}(x_{b(\mathbf{A})})|^{\beta} \lesssim \kappa_+^{\beta n}.
\end{align*}
Thus we obtain a bound for some universal $C' > 0$:
\begin{align*}
 &\sum_{\B} p_\B \Big|\int_{I_{b(\mathbf{A})}} \frac{w_{\a_{k+1}}(x)}{w_{\a_{k+1}}(x_{b(\mathbf{A})})} e^{i \xi \psi(g_{\A * \B}(x)) }\, d\mu(x)\Big|^2 + C' \kappa_+^{\beta n},
\end{align*}
so we just need to study
\begin{align*}
 \sum_{\B} p_\B \Big|\int_{I_{b(\mathbf{A})}} \frac{w_{\a_{k+1}}(x)}{w_{\a_{k+1}}(x_{b(\mathbf{A})})} e^{i \xi \psi(g_{\A * \B}(x)) }\, d\mu(x)\Big|^2.
\end{align*}
Expand the square so we just need to bound

\begin{align*}
 \iint_{I_{b(\mathbf{A})}^2} \sum_{\B} \frac{w_{\a_{k+1}}(x)}{w_{\a_{k+1}}(x_{b(\mathbf{A})})} \frac{w_{\a_{k+1}}(y)}{w_{\a_{k+1}}(x_{b(\mathbf{A})})} p_\B e^{i \xi (\psi(g_{\A * \B}(x))-\psi(g_{\A * \B}(y))) }\, d\mu(x) \, d\mu(y) \\
 = \iint_{I_{b(\mathbf{A})}^2} \frac{w_{\a_{k+1}}(x)}{w_{\a_{k+1}}(x_{b(\mathbf{A})})} \frac{w_{\a_{k+1}}(y)}{w_{\a_{k+1}}(x_{b(\mathbf{A})})} \sum_{\B} p_\B e^{i \xi (\psi(g_{\A * \B}(x))-\psi(g_{\A * \B}(y))) }\, d\mu(x) \, d\mu(y).
\end{align*}



\subsection{Linearisation of the phase}
Let us now linearise the phase. By the mean value theorem, we have, for some $t=t_{\mathbf{A},\mathbf{B}}(x,y)$:
\begin{align*}
\psi(g_{\A * \B}(x))-\psi(g_{\A * \B}(y))) = (\psi \circ g_{\A \sharp \B})'(t) (g_{\mathbf{a}_{k+1}}(x)-g_{\mathbf{a}_{k+1}}(y)).
\end{align*}
By the chain rule, we can then find $t_1 \in I_{\mathbf{a}_1},t_2 \in I_{\mathbf{a}_2},\dots,t_{k+1} \in I_{\mathbf{a}_{k+1}}$ (depending on $t$) such that
\begin{align*}
 (\psi \circ g_{\A \sharp \B})'(t) = \psi'(t_1) g_{\a_1' \b_1}'(t_2) \dots g_{\a_{k}' \b_k}'(t_{k+1}).
\end{align*}
We can then rewrite our sum of interest as
$$ \sum_{\B}p_\B \Big|\int_{I_{b(\mathbf{A})}} \frac{w_{\A * \B}(x)}{w_{\A * \B}(x_{b(\mathbf{A})})} e^{i \xi \psi(g_{\A * \B}(x)) }\, d\mu(x)\Big|^2 $$ $$ = \iint_{I_{b(\mathbf{A})}^2} \frac{w_{\a_{k+1}}(x)}{w_{\a_{k+1}}(x_{b(\mathbf{A})})} \frac{w_{\a_{k+1}}(y)}{w_{\a_{k+1}}(x_{b(\mathbf{A})})} \sum_{\B} p_\B e^{i \xi (\psi(g_{\A * \B}(x))-\psi(g_{\A * \B}(y))) }\, d\mu(x) \, d\mu(y) $$
$$ = \iint_{I_{b(\mathbf{A})}^2} \frac{w_{\a_{k+1}}(x)}{w_{\a_{k+1}}(x_{b(\mathbf{A})})} \frac{w_{\a_{k+1}}(y)}{w_{\a_{k+1}}(x_{b(\mathbf{A})})} \sum_{\B} p_\B e^{i \xi \psi'(t_1) g_{\a_1' \b_1}'(t_2) \dots g_{\a_{k}' \b_k}'(t_{k+1}) (g_{\mathbf{a}_{k+1}}(x) - g_{\mathbf{a}_{k+1}}(y)) }\, d\mu(x) \, d\mu(y).$$
To gain space, denote $W_{\mathbf{a}}(x,y) := \frac{w_{\a}(x)}{w_{\a}(x_{b(\mathbf{a})})} \frac{w_{\a}(y)}{w_{\a}(x_{b(\mathbf{a})})}$. It is more convenient to modify the signs in the phase so that everything becomes positive. To do so, notice now that this quantity is real. In particular, it is equal to it's real part, which gives:
$$ = \iint_{I_{b(\mathbf{A})}^2} W_{\mathbf{a}_{k+1}}(x,y) \sum_{\B} p_\B \cos\Big({ \xi \psi'(t_1) g_{\a_1' \b_1}'(t_2) \dots g_{\a_{k}' \b_k}'(t_{k+1}) (g_{\mathbf{a}_{k+1}}(x) - g_{\mathbf{a}_{k+1}}(y)) } \Big) d\mu(x) \, d\mu(y) $$
$$ = \iint_{I_{b(\mathbf{A})}^2} W_{\mathbf{a}_{k+1}}(x,y) \sum_{\B} p_\B \cos\Big({ \xi |\psi'(t_1)| |g_{\a_1' \b_1}'(t_2)| \dots |g_{\a_{k}' \b_k}'(t_{k+1})| |g_{\mathbf{a}_{k+1}}(x) - g_{\mathbf{a}_{k+1}}(y)| } \Big) d\mu(x) \, d\mu(y). $$
And then triangle inequality and the fact that $|\text{Re}(z)| \leq |z|$ yields:
$$ \leq \iint_{ I_{ b(\mathbf{A}) }^2 } W_{\mathbf{a}_{k+1}}(x,y) \Big|\sum_{\B} p_\B \cos\Big({ \xi |\psi'(t_1)| |g_{\a_1' \b_1}'(t_2)| \dots |g_{\a_{k}' \b_k}'(t_{k+1})| |g_{\mathbf{a}_{k+1}}(x) - g_{\mathbf{a}_{k+1}}(y)| } \Big)\Big| d\mu(x) \, d\mu(y) $$
$$ \leq \iint_{ I_{ b(\mathbf{A}) }^2 } W_{\mathbf{a}_{k+1}}(x,y) \Big|\sum_{\B} p_\B e^{ i \xi \psi'(t_1) |g_{\a_1' \b_1}'(t_2)| \dots |g_{\a_{k}' \b_k}'(t_{k+1})| |g_{\mathbf{a}_{k+1}}(x) - g_{\mathbf{a}_{k+1}}(y)| } \Big| d\mu(x) \, d\mu(y) $$
$$ \lesssim \iint_{ I_{ b(\mathbf{A}) }^2 } \Big|\sum_{\B} p_\B e^{ i \xi \psi'(t_1) |g_{\a_1' \b_1}'(t_2)| \dots |g_{\a_{k}' \b_k}'(t_{k+1})| |g_{\mathbf{a}_{k+1}}(x) - g_{\mathbf{a}_{k+1}}(y)| } \Big| d\mu(x) \, d\mu(y), $$
where the last inequality comes from Gibbs estimates. Let us now renormalize the phases and get rid of the dependencies of the $(t_j)_j$. Denote $x_{\mathbf{a}} := g_{\mathbf{a}}(x_{b(\mathbf{a})}) \in I_{\a}$ some reference points. For all $j \geq 1$, as $t_{j+1} \in I_{\mathbf{a}_{j+1}}$, we have by bounded $(C,\alpha)$-distortion property and mean value theorem for some $z_{\a_{j+1}} \in I$ that
\begin{align*}
 e^{-C\kappa_+^{\alpha n} } \leq \ab{\frac{g_{\a_{j}' \b_j}'(t_{j+1})}{ g_{\a_{j}' \b_j}'(x_{\mathbf{a}_{j+1}})}}\leq e^{C \kappa_+^{\alpha n} }.
\end{align*}
Similarly, by hypothesis we have that
\begin{align*}
 |\log \psi'(x) - \log \psi'(y)| \leq C_\psi |\xi|^{\tilde \eps_\psi}|x-y|^{\alpha},
\end{align*}
for some small enough $\tilde{\varepsilon}_\psi>0$. One can choose it so small that $|\xi|^{\tilde \eps_\psi} \leq \kappa_+^{-\alpha n/2}$. Then, for any $t_1 \in I_{\a_1}$, we find that
\begin{align*}
 e^{-C_\psi \kappa_+^{\alpha n/2}} \leq e^{-C_\psi |\xi|^{\tilde{\varepsilon}_\psi} \kappa_+^{\alpha n}} \leq \ab{\frac{\psi'(t_1)}{\psi'(x_{\mathbf{a}_{1}})}}\leq e^{C_\psi |\xi|^{\tilde{\varepsilon}_\psi} \kappa_+^{\alpha n}} \leq e^{C_\psi \kappa_+^{\alpha n/2}}.
\end{align*}
Thus
\begin{align*}
 e^{-\tilde{C} \kappa_+^{\alpha n/2}} \leq e^{-kC\kappa_+^{\alpha n} - C_\psi \kappa_+^{\alpha n/2}} \leq \Big|\frac{\psi'(t_1) g_{\a_1' \b_1}'(t_2) \dots g_{\a_{k}' \b_k}'(t_{k+1})}{\psi'(x_{\mathbf{a}_{1}}) g_{\a_1' \b_1}'(x_{\mathbf{a}_{2}}) \dots g_{\a_{k}' \b_k}'(x_{\mathbf{a}_{k+1}})}\Big| \leq e^{kC\kappa_+^{\alpha n} + C_\psi \kappa_+^{\alpha n/2}} \leq e^{\tilde{C} \kappa_+^{\alpha n/2}}.
\end{align*}

Since by mean value theorem $|e^x - 1| \leq |x|$ when $|x| \leq 1$, this implies for large enough $n$:
\begin{align*}
 \Bigg|\frac{|\psi'(t_1) g_{\a_1' \b_1}'(t_2) \dots g_{\a_{k}' \b_k}'(t_{k+1})|}{|\psi'(x_{\mathbf{a}_{1}}) g_{\a_1' \b_1}'(x_{\mathbf{a}_{2}}) \dots g_{\a_{k}' \b_k}'(x_{\mathbf{a}_{k+1}})|} - 1\Bigg| \leq \tilde{C} \kappa_+^{\alpha n/2}.
\end{align*}

Thus if we denote
\begin{align*}
 \eta_{\a_{k+1},\a_1}(x,y) := e^{-2\lambda n k} \xi |g_{\a_{k+1}}(x) - g_{\a_{k+1}}(y)| |\psi'(x_{\mathbf{a}_{1}})|,
\end{align*}
and
\begin{align*}
 \zeta_{j,\A}\su{\b} := e^{2\la n } |g_{\mathbf{a}_j' \mathbf{b}}'(x_{\mathbf{a}_{j+1}})|, \quad j = 1,2,\dots,k,
\end{align*}
then by the mean value theorem for some $z \in [x,y]$ we have
\begin{align*}
& |\xi \psi'(t_1) |g_{\a_1' \b_1}'(t_2)| \dots |g_{\a_{k}' \b_k}'(t_{k+1})| |g_{\mathbf{a}_{k+1}}(x) - g_{\mathbf{a}_{k+1}}(y)| - \eta_{\a_{k+1},\a_1}(x,y) \zeta_{1,\A}(\b_1) \dots \zeta_{k,\A}(\b_k)|\\
 &= |\xi| |g_{\a_{k+1}}(x) - g_{\a_{k+1}}(y)| \ab{ \psi'(t_1) |g_{\a_1' \b_1}'(t_2)| \dots |g_{\a_{k}' \b_k}'(t_{k+1})| - \psi'(x_{\mathbf{a}_{1}}) g_{\a_1' \b_1}'(x_{\mathbf{a}_{2}}) \dots g_{\a_{k}' \b_k}'(x_{\mathbf{a}_{k+1}})}\\
 & \leq \ab{\xi} \no{\psi'}_\infty |g_{\a_1' \b_1}'(x_{\mathbf{a}_{2}}) \dots g_{\a_{k}' \b_k}'(x_{\mathbf{a}_{k+1}})g_{\mathbf{a}_{k+1}}'(z)| \ab{\frac{\psi'(t_1) g_{\a_1' \b_1}'(t_2) \dots g_{\a_{k}' \b_k}'(t_{k+1})}{\psi'(x_{\mathbf{a}_{1}})g_{\a_1' \b_1}'(x_{\mathbf{a}_{2}}) \dots g_{\a_{k}' \b_k}'(x_{\mathbf{a}_{k+1}})} - 1 }\\
 & \leq \tilde{C} \ab{\xi} \no{\psi'}_\infty | g_{\a_1' \b_1}'(x_{\mathbf{a}_{2}}) \dots g_{\a_{k}' \b_k}'(x_{\mathbf{a}_{k+1}})g_{\mathbf{a}_{k+1}}'(z) | \kappa_+^{\alpha n/2} \\
 & \leq |\xi| \ | g_{\a_1' \b_1}'(x_{\mathbf{a}_{2}}) \dots g_{\a_{k}' \b_k}'(x_{\mathbf{a}_{k+1}})g_{\mathbf{a}_{k+1}}'(z) | \kappa_+^{\alpha n/3}.
\end{align*}
for $n$ large enough, if $\tilde{\varepsilon}_\psi$ is chosen so small that $ \no{\psi'}_\infty \lesssim \kappa_+^{-\alpha n/10}$. Recall now that we fixed the relation $|\xi| \simeq e^{(2k+1)\lambda n + \varepsilon_0 n}$. If we suppose that $\varepsilon_0$ is chosen so small that $\varepsilon_0 < \alpha |\ln \kappa_+|/6$, then we can further bound this difference by $$ \lesssim e^{(2k+1) \lambda n} g_{\A * \B}'(z) \kappa_+^{\alpha n/6}. $$
This is not a decaying term for all choice of words, but it is for most choices of words. More precisely, using the $1$-Lipschitz continuity of $x \mapsto e^{ix}$ and Proposition \ref{prop:moments}, we find:
\begin{align*}
 &\sum_{\A} p_\A \Big|\sum_{\B} p_\B e^{ i \xi \psi'(t_1) |g_{\a_1' \b_1}'(t_2)| \dots |g_{\a_{k}' \b_k}'(t_{k+1})| |g_{\mathbf{a}_{k+1}}(x) - g_{\mathbf{a}_{k+1}}(y)| } \Big|\\
 & \leq\sum_{\A} p_\A \Big|\sum_{\B} p_\B e^{i \eta_{\a_{k+1},\a_1}(x,y) \zeta_{1,\A}(\b_1) \dots \zeta_{k,\A}(\b_k)} \Big| \\
 & \qquad + \sum_{\A, \B} p_\A p_\B \Big|e^{ i \xi \psi'(t_1) |g_{\a_1' \b_1}'(t_2)| \dots |g_{\a_{k}' \b_k}'(t_{k+1})| |g_{\mathbf{a}_{k+1}}(x) - g_{\mathbf{a}_{k+1}}(y)| } - e^{i\eta_{\a_{k+1},\a_1}(x,y) \zeta_{1,\A}(\b_1) \dots \zeta_{k,\A}(\b_k) }\Big|\\
 & \leq\sum_{\A} p_\A \Big|\sum_{\B} p_\B e^{i \eta_{\a_{k+1},\a_1}(x,y) \zeta_{1,\A}(\b_1) \dots \zeta_{k,\A}(\b_k)} \Big| + 2 \sum_{\A, \B} p_\A p_\B \mathbf{1}( g_{\A*\B}'(x) e^{(2 k +1)\lambda n} \geq \kappa_+^{-\alpha n/20} ) \\
 & \qquad + C \sum_{\A, \B} p_\A p_\B \mathbf{1}( g_{\A*\B}'(x) e^{(2 k +1)\lambda n} \leq \kappa_+^{-\alpha n/20} ) e^{(2k+1) \lambda n} g_{\A * \B}'(z) \kappa_+^{\alpha n/6} \\
 & \leq\sum_{\A} p_\A \Big|\sum_{\B} p_\B e^{i \eta_{\a_{k+1},\a_1}(x,y) \zeta_{1,\A}(\b_1) \dots \zeta_{k,\A}(\b_k)} \Big| \\
 & \qquad + C \sum_{\mathbf{C} \in \Sigma^{(2k+1) n}} p_\mathbf{C} \Big(
e^{(2k+1) \lambda n} g_{\mathbf{C}}'(z) \kappa_+^{\alpha n/20} \Big)^t + C \kappa_+^{\alpha n/10} \\
& \leq\sum_{\A} p_\A \Big|\sum_{\B} p_\B e^{i \eta_{\a_{k+1},\a_1}(x,y) \zeta_{1,\A}(\b_1) \dots \zeta_{k,\A}(\b_k)} \Big| + C \kappa_+^{\alpha t n/20} e^{\varepsilon_{\gamma_0} n t^2} + C \kappa_+^{\alpha n/10} \\
& \leq\sum_{\A} p_\A \Big|\sum_{\B} p_\B e^{i \eta_{\a_{k+1},\a_1}(x,y) \zeta_{1,\A}(\b_1) \dots \zeta_{k,\A}(\b_k)} \Big| + C ( e^{-\frac{\alpha^2 |\ln \kappa_+|^2}{\varepsilon_{\gamma_0} 800 } n} + \kappa_+^{\alpha n/10})
\end{align*}
by choosing $t := \frac{\alpha |\ln \kappa_+|}{\varepsilon_{\gamma_0} 20}$.\\

Thus, to conclude Fourier decay of the measure $\mu$, we just need to bound
\begin{align*}
 \sum_{\A} p_\A \iint_{ I_{ b(\mathbf{A}) }^2 } X_{x,y}(\A) \, d\mu(x) \, d\mu(y),
\end{align*}
where $X_{x,y}(\mathbf{A})$ is the $\R_+$-valued random variable
\begin{align*}
 X_{x,y}(\A) := \Big| \sum_{\B} p_\B e^{i \eta_{\a_{k+1},\a_1}(x,y) \zeta_{1,\A}(\b_1) \dots \zeta_{k,\A}(\b_k)} \Big|.
\end{align*}

\subsection{Applying the sum-product bound under non-concentration of derivatives}

In the notation in Theorem \ref{thm:sumproductWords}, we can write our random variable as
\begin{align*}
 X_{x,y}(\A) = \Big|{{\sum_{\mathbf{b}_1 \in \mathcal{Z}_1 }}} \dots
{{\sum_{\mathbf{b}_k \in \mathcal{Z}_k }}} w_{\mathbf{b}_1}^{(1)} \dots w_{\mathbf{b}_k}^{(k)} e^{i \eta \zeta_1(\mathbf{b}_1) \dots \zeta_k(\mathbf{b}_k)} \Big|,
\end{align*}
where the map
\begin{align*}
 \zeta_j(\b) := \zeta_{j,\A}(\b) = e^{2\lambda n} |g_{\a_j' \b}'(x_{\a_{j+1}})|,
\end{align*}
defined on the sets $\mathcal{Z}_j := \mathcal{Z}_{j,\A} = \{\b : \a_{j}' \b' \a_{j+1} \ \text{is admissible} \}$ and
\begin{align*}
 w_{\mathbf{b}_j}^{(j)} := \mu(I_{\b_j}) \in [0,1] \quad \text{and}\quad \eta := \eta_{\a_{k+1},\a_1}(x,y).
\end{align*}

Now, by Theorem \ref{thm:sumproductWords}, if there exists $0 < \gamma < 1$ and $\eps_1 > 0$ such that if
for all $j \in \{1,\dots, k\}$, the following non-concentration condition holds for when $|\eta_{\a_{k+1},\a_1}(x,y)| \geq 1$:
\begin{align}
 \label{eq:proofnc}
\forall \sigma \in [|\eta_{\a_{k+1},\a_1}(x,y)|^{-2},|\eta_{\a_{k+1},\a_1}(x,y)|^{-\varepsilon_1}], \ \sum_{(\mathbf{b},\mathbf{c}) \in \mathcal{Z}_{j,\A}^2} \mu(I_{\b}) \mu(I_{\c}) \mathbf{1}_{[-\sigma,\sigma]}( \zeta_{j,\A}(\mathbf{b}) - \zeta_{j,\A}(\mathbf{c}) ) \leq \sigma^\gamma,
\end{align}
Then, there exists a constant $c>0$ that only depends on $\gamma$ such that
\begin{align}
 \label{eq:proofbd}\forall t>0, \ X_{x,y}(\A) \leq c |\eta_{\a_{k+1},\a_1}(x,y)|^{-\varepsilon_1^2} + |\eta_{\a_{k+1},\a_1}(x,y)|^{-\frac{t^3}{2 \sqrt{k}}}. \prod_{j=1}^k M_{j,\A}(t^2),
\end{align}

where the $t$-moment $M_{j,\mathbf{A}}(t)$ is given by
\begin{align*}
 M_{j,\A}(t) := \sum_{\mathbf{b} \in \mathcal{Z}_{j,\A}} \mu(I_\b) e^{t |\ln \zeta_{j,\A}(\mathbf{b})|}.
\end{align*}

We will now define the sets of \textit{well-distributed blocks and words} on which the sum-product phenomenon applies, which requires us to finally fix parameters. The following Definition also reveals to us how the ultimate Fourier decay exponent depend on the hypotheses of the IFS:

\begin{Definition}[Choices of parameters]
Denote by $\Theta,\rho \in (0,1)$ the two constants given by our hypothesis (QNL). Recall that $\alpha \in (0,1)$ is an Hölder exponent of $\tau$ and that $\kappa_+ := \sup_{a \in \mathcal{A}} |g_a'|_\infty \in (0,1)$. We then define
$$\eps_0 := \alpha |\ln \kappa_+| \cdot |\ln \rho| /10 > 0.$$
Furthermore, we set
$\gamma_2 := \Theta/4$. Theorem \ref{thm:sumproductWords} then fixes some $k \in \mathbb{N}$ and $\varepsilon_1 \in (0,1)$ (that are associated to $\gamma_2>0$).
\end{Definition}

With these fixed, we can now define

\begin{Definition}[Well-distributed words]\label{def:well}
We say that a block is $\mathbf{A}=\mathbf{a_0} \dots \mathbf{a}_k \in \Sigma^{(k+1)n}$ is \textit{well-distributed} if for all $j \in \llbracket 1,k\rrbracket$ and for all $x\in I$:

\begin{align*}
 \forall \sigma \in [ e^{- 4 \varepsilon_0 n}, e^{-\varepsilon_0 \varepsilon_1 n/2} ], \quad \sum_{(\mathbf{b},\mathbf{c}) \in \mathcal{Z}_{j,\mathbf{A}}^2} w_{\mathbf{b}}(x) w_{\mathbf{c}}(x) \chi_{[-\sigma,\sigma]}( \ln \zeta_{j,\mathbf{A}}(\mathbf{b}) - \ln \zeta_{j,\mathbf{A}}(\mathbf{c})) \leq \sigma^{\gamma_2}.
\end{align*}

Similarly, we call a couple of words $(\mathbf{a},\mathbf{d}) \in (\Sigma^n)^2$ \textit{well-distributed} if, for all $x \in I$:

\begin{align*}
 \forall \sigma \in [ e^{- 4 \varepsilon_0 n}, e^{-\varepsilon_0 \varepsilon_1 n/2} ], \quad \underset{\mathbf{a}'\mathbf{c}'\mathbf{d}' \in \Sigma^{3n}}{\underset{\mathbf{a}'\mathbf{b}'\mathbf{d}' \in \Sigma^{3n}}{\sum_{(\mathbf{b},\mathbf{c}) \in (\Sigma^{n+1})^2}}} w_{\mathbf{b}}(x) w_\mathbf{c}(x) \chi_{[-\sigma,\sigma]}( \ln g_{\mathbf{a}' \mathbf{b}}'(x_\mathbf{d}) - \ln g_{\mathbf{a}' \mathbf{c}}'(x_\mathbf{d})) \leq \sigma^{\gamma_2}.
\end{align*}
We will denote the set of well-distributed blocks by $\cW_n^{k+1} $, and the set of well-distributed couples by $\cW_{n}^2$.
\end{Definition}
We have the following crucial non-concentration property, whose proof is postponed to Section \ref{sec:nonconc}:

\begin{prop}\label{lma:nonconcmain0}
Define $\alpha_{reg} := \varepsilon_0 \varepsilon_1 \gamma_2 > 0$. Then most blocks are well-distributed:
\begin{align*}
 \sum_{ \A \notin \cW_n^{k+1} } w_{\mathbf{A}}(x_{b(\mathbf{A})}) \lesssim e^{-\alpha_{reg} n}.
\end{align*}

\end{prop}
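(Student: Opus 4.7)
The plan is to combine a Chebyshev-Markov reduction with a Cauchy-Schwartz duplication in the ``reference-point'' variable, which produces a temporal-distance function and thereby unlocks the (QNL) hypothesis. A block $\A = \a_0 \cdots \a_k$ fails well-distributedness iff there exist $j \in \{1,\ldots,k\}$, $x \in I$, and $\sigma \in [e^{-4\eps_0 n}, e^{-\eps_0\eps_1 n/2}]$ such that
\begin{equation*}
T_{j,\A}(x,\sigma) := \sum_{(\b,\c)\in \cZ_{j,\A}^2} w_\b(x) w_\c(x) \chi_{[-\sigma,\sigma]}\!\bigl(\ln\zeta_{j,\A}(\b) - \ln\zeta_{j,\A}(\c)\bigr) > \sigma^{\gamma_2}.
\end{equation*}
Gibbs uniformity $w_\b(x)\simeq p_\b$ makes $T_{j,\A}$ essentially $x$-independent. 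Discretising $\sigma$ to a dyadic grid (of size $O(n)$) and union-bounding over $j$, it suffices to prove $\sum_{\A\text{ bad}(j,\sigma)} p_\A \lesssim e^{-\alpha n}$ for each fixed $(j,\sigma)$, with some $\alpha>\alpha_{\mathrm{reg}}$; Markov's inequality then reduces this to bounding $\sum_\A p_\A T_{j,\A}(\sigma)$. Only $\mathbf{u} := \a_j$ and $\mathbf{v} := \a_{j+1}$ enter $T_{j,\A}$, so summing out the remaining $\a_i$ (each contributing $\leq 1$, with admissibility factors absorbed into constants) reduces the problem to showing
\begin{equation*}
V_\sigma := \sum_{\mathbf{u},\mathbf{v},\b,\c \in \Sigma^n} p_\mathbf{u} p_\mathbf{v} p_\b p_\c \, \chi_{[-\sigma,\sigma]}\bigl(\Xi(\mathbf{u},\mathbf{v},\b,\c)\bigr) \lesssim (\sigma^\Theta + \rho^n)^{1/2},
\end{equation*}
where, using the identity $g_{\mathbf{u}'\b} = g_\mathbf{u} \circ g_\b$ recalled in Section \ref{sec:reduction},
\begin{equation*}
\Xi(\mathbf{u},\mathbf{v},\b,\c) = X_\mathbf{u}\!\bigl(g_\b(x_\mathbf{v}), g_\c(x_\mathbf{v})\bigr) + \bigl[\ln g_\b'(x_\mathbf{v}) - \ln g_\c'(x_\mathbf{v})\bigr].
\end{equation*}

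The algebraic heart of the proof, and the main obstacle to overcome, is that the two-point variation of $\Xi$ in the $\mathbf{v}$-variable is already a $\Delta_n$. Fix $(\mathbf{u},\b,\c)$ and compare $\Xi(\mathbf{v}), \Xi(\mathbf{v}')$: the $X_\mathbf{u}$-difference is bounded, by bounded distortion of $\mathbf{u}$ combined with the $\kappa_+^n$-contraction of $g_\b, g_\c$, by $C\bigl(|g_\b(x_\mathbf{v}) - g_\b(x_{\mathbf{v}'})|^\alpha + |g_\c(x_\mathbf{v}) - g_\c(x_{\mathbf{v}'})|^\alpha\bigr) \lesssim \kappa_+^{\alpha n}$; while the bracket-difference equals exactly $\Delta_n((\b,x_\mathbf{v}),(\c,x_{\mathbf{v}'}))$. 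Hence
\begin{equation*}
\Xi(\mathbf{u},\mathbf{v},\b,\c) - \Xi(\mathbf{u},\mathbf{v}',\b,\c) = \Delta_n\!\bigl((\b,x_\mathbf{v}),(\c,x_{\mathbf{v}'})\bigr) + O(\kappa_+^{\alpha n}),
\end{equation*}
and the error is negligible compared to $\sigma \geq e^{-4\eps_0 n}$ by the choice of $\eps_0$. Consequently $\chi_{[-\sigma,\sigma]}(\Xi(\mathbf{v}))\,\chi_{[-\sigma,\sigma]}(\Xi(\mathbf{v}'))\leq \chi_{[-3\sigma,3\sigma]}(\Delta_n((\b,x_\mathbf{v}),(\c,x_{\mathbf{v}'})))$. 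Expanding $(\sum_\mathbf{v} p_\mathbf{v}\chi)^2 = \sum_{\mathbf{v},\mathbf{v}'} p_\mathbf{v} p_{\mathbf{v}'} \chi\chi$, and applying a double Cauchy-Schwartz first over $\mathbf{u}$ and then over $(\b,\c)$, one obtains $V_\sigma^2 \leq \sum_{\mathbf{u},\b,\c,\mathbf{v},\mathbf{v}'} p_\mathbf{u} p_\b p_\c p_\mathbf{v} p_{\mathbf{v}'} \chi_{[-3\sigma,3\sigma]}(\Delta_n)$; the $\mathbf{u}$-sum is trivial (factor $1$ since $\Delta_n$ no longer depends on $\mathbf{u}$), and (QNL) applied to the remaining four-word sum yields $V_\sigma^2 \lesssim \sigma^\Theta + \rho^n$, hence the displayed bound on $V_\sigma$.

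Combining, $\sum_{\A \notin \cW_n^{k+1}} p_\A \lesssim kn \cdot \sup_\sigma \sigma^{-\gamma_2}(\sigma^{\Theta/2} + \rho^{n/2})$, and with $\gamma_2 = \Theta/4$ (reducible if necessary) the supremum over $\sigma \in [e^{-4\eps_0 n}, e^{-\eps_0\eps_1 n/2}]$ is $\lesssim e^{-\Theta\eps_0\eps_1 n/8} + e^{\Theta\eps_0 n}\rho^{n/2}$. The parameter choice $\eps_0 = \alpha|\ln\kappa_+||\ln\rho|/10$ made in the definition of well-distributed blocks is designed precisely so that $\Theta\eps_0$ is dominated by $|\ln\rho|/2$, so both terms decay exponentially at rate at least $\alpha_{\mathrm{reg}} = \eps_0\eps_1\gamma_2$, absorbing the polynomial factor $kn$ from the union bound. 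The only genuinely subtle step is the Cauchy-Schwartz duplication in $\mathbf{v}$: (QNL) is a two-point non-concentration statement while well-distributedness is a one-point statement, and this trick bridges them by promoting the indicator of a small single-point log-ratio into the indicator of an honest temporal distance function through squaring.
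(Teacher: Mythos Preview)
Your proof is correct and follows essentially the same route as the paper's (split there into Lemmas~\ref{lma:firstreduction} and the subsequent lemma): reduce from blocks to pairs, discretise $\sigma$, apply Markov, then use a single Cauchy--Schwarz over $(\mathbf{u},\b,\c)$ to duplicate the reference-point word $\mathbf{v}$, observe that the $X_\mathbf{u}$-contribution to $\Xi(\mathbf{v})-\Xi(\mathbf{v}')$ is $O(\kappa_+^{\alpha n})$ by bounded distortion while the bracket-difference is exactly $\Delta_n$, and finish with (QNL). Your phrase ``double Cauchy--Schwartz first over $\mathbf{u}$ and then over $(\b,\c)$'' is slightly misleading---it is one Cauchy--Schwarz over the product measure in $(\mathbf{u},\b,\c)$---but the stated conclusion $V_\sigma^2 \lesssim \sigma^\Theta + \rho^n$ and the parameter bookkeeping are right.
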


Assuming Proposition \ref{lma:nonconcmain0}, we can now complete the proof of Theorem \ref{thm:mainCountable}. In the following we will use the following notation:
\begin{align*}
 \kappa_-({\a_{k}}) = \inf |g_{\a_{k}}'| \quad \text{and} \quad \kappa_+({\a_{k}}) = \sup |g_{\a_{k}}'|.
\end{align*}
Let then $\tilde \cW$ be those blocks $\A \in \Sigma^{(k+1)n}$ such that the last word $\a_{k+1}$ in the block $\A$ satisfies
\begin{align*}
 C_\psi e^{\lambda n} \kappa_+(\a_{k+1}) \leq e^{\eps_0 n},
\end{align*}
where $\eps_\psi > 0$ comes from the condition
\begin{align*}
 \inf_x |\psi'(x,\xi)| \gtrsim |\xi|^{-\eps_\psi} = \exp(-\eps_\psi c_0 n),
\end{align*}

recall that $|\xi| = e^{c_0 n}$ for $c_0 = (2k+1)\lambda + \eps_0$.

Finally, given any block $\A \in \Sigma^{(k+1)n}$, let $\cF_\A$ be the set of pairs $(x,y) \in I_{b(\mathbf{A})}^2$ such that
\begin{align*}
 |x-y| > \exp(-(\lambda -\eps_\psi c_0)n) \kappa_-({\a_{k+1}})^{-1} \exp(-\eps_0 n / 2).
\end{align*}

\begin{lem} We can bound for any small enough $t > 0$ that and large enough $n$:
\begin{align*}
 &\sum_{\A} p_\A \iint_{ I_{ b(\mathbf{A}) }^2 } X_{x,y}(\A) d\mu(x)\,d\mu(y) \\
 & \lesssim e^{-\alpha_{reg} n} + C_\psi^t 2C_\phi e^{-(\eps_0 t + \eps_{\gamma_0} t^2) n} + 2C_\phi e^{-(s_\mu(\lambda +\eps_\psi c_0- \eps_0 / 2)+t \lambda - \eps_{\gamma_0} t^2)n} \\
 & \qquad + \sum_{\A \in \cW \cap \tilde \cW} p_\A \sup_{(x,y) \in \cF_\A} X_{x,y}(\A).
\end{align*}
\end{lem}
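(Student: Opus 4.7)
The plan is to bound the integral by decomposing the sum over $\A$ into three regions (according to whether $\A \in \cW_n^{k+1}$ and whether $\A \in \tilde\cW$) and the double integral over $I_{b(\mathbf{A})}^2$ into two regions (according to whether $(x,y) \in \cF_\A$). Each of the four terms in the conclusion will correspond to one of these cases: non-well-distributed blocks, well-distributed blocks whose last word has too-large derivative, close pairs $(x,y)$, and the remaining "good" sum. The central trivial input is that $X_{x,y}(\A) \leq \sum_\B p_\B \leq 1$ by the triangle inequality, so on the bad parts we lose nothing by replacing $X_{x,y}(\A)$ by $1$.

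I would first handle $\A \notin \cW_n^{k+1}$: bound $X_{x,y}(\A) \leq 1$ and $\iint_{I_{b(\A)}^2} d\mu^{\otimes 2} \leq \mu(I_{b(\A)})^2 \leq 1$, then apply Proposition \ref{lma:nonconcmain0} together with the Gibbs estimate $p_\A \lesssim w_\A(x_{b(\A)})$, giving the first term $e^{-\alpha_{reg} n}$. Next, for $\A \in \cW_n^{k+1}$ with $\A \notin \tilde\cW$, the obstruction is that $C_\psi e^{\lambda n}\kappa_+(\a_{k+1}) > e^{\eps_0 n}$; since $p_\A$ factors over the $\a_j$ with $\sum_{\a_j} p_{\a_j} \leq 1$, the bad sum reduces to a sum over $\a_{k+1}$ alone. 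I would apply a Markov-type bound with exponent $t>0$ small: $p_{\a_{k+1}} \1_{\text{bad}} \leq C_\psi^t e^{-\eps_0 tn} p_{\a_{k+1}} (e^{\lambda n}\kappa_+(\a_{k+1}))^t$, and control the summed moment with Proposition \ref{prop:moments} (using bounded distortion to replace $\kappa_+(\a_{k+1})$ by $|g_{\a_{k+1}}'(x)|$). This yields the second term.

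For $\A \in \cW \cap \tilde\cW$ with $(x,y) \notin \cF_\A$, the set $\{(x,y) \in I_{b(\A)}^2 : |x-y|\leq \sigma_\A\}$ with $\sigma_\A := e^{-(\lambda - \eps_\psi c_0 + \eps_0/2)n}\kappa_-(\a_{k+1})^{-1}$ is a thin strip whose $\mu \otimes \mu$-measure is controlled by the upper regularity Proposition \ref{prop:upperreg}: integrating $\mu(\{y : |y-x|\leq \sigma_\A\}) \leq C_{reg}\sigma_\A^{s_\mu}$ over $x$ and using $X_{x,y}(\A)\leq 1$ gives an $\A$-dependent bound proportional to $\sigma_\A^{s_\mu}$. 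Summing over $\A$ factors again: the sums over $\a_1,\dots,\a_k$ each contribute at most $1$, and the remaining sum $\sum_{\a_{k+1}} p_{\a_{k+1}} \kappa_-(\a_{k+1})^{-s_\mu}$ is bounded using bounded distortion to replace $\kappa_-(\a_{k+1})$ by $|g_{\a_{k+1}}'(x)|$ and then applying Proposition \ref{prop:moments} at a well-chosen exponent $t$ (the Taylor term in the pressure producing the $\eps_{\gamma_0}t^2$ contribution). This yields the third term.

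Finally, on the remaining set $\A \in \cW \cap \tilde\cW$ and $(x,y) \in \cF_\A$, I simply bound the inner integral by $\sup_{(x,y)\in\cF_\A} X_{x,y}(\A)$ times $\mu(I_{b(\A)})^2 \leq 1$, which gives the fourth term verbatim. The only nonroutine part of the argument is keeping track of how $\sigma_\A^{s_\mu}$ interacts with the word-level moment bound in case (iii); this is where the condition $|t|<t_{\gamma_0}$ from Proposition \ref{prop:moments} must be respected, and where the chosen power $t$ in the statement must be small enough so that the exponent stays negative after combining the prefactor $e^{-s_\mu(\lambda+\eps_\psi c_0-\eps_0/2)n}$ with the moment growth. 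Everything else is essentially telescoping over the factorization $p_\A = p_{\a_1}\cdots p_{\a_{k+1}}$ and using the trivial bound on $X_{x,y}(\A)$.
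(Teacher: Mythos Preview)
Your proposal is correct and follows essentially the same approach as the paper: the paper also decomposes according to $\A \notin \cW$, $\A \notin \tilde\cW$, $(x,y) \notin \cF_\A$, and the good remainder, using respectively Proposition~\ref{lma:nonconcmain0}, a Markov-type bound combined with Proposition~\ref{prop:moments}, upper regularity (Proposition~\ref{prop:upperreg}) combined with Proposition~\ref{prop:moments}, and the trivial $\sup$ bound. The only cosmetic difference is that the paper does not insist on the pieces being disjoint (it bounds each bad contribution over all $\A$ rather than only over the complementary good set), which is harmless since all terms are nonnegative.
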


\begin{proof}
The proof combines the bounds from Proposition \ref{lma:nonconcmain}, Proposition \ref{prop:moments} and Proposition \ref{prop:upperreg} together with the bound $X_{x,y}(\A) \leq 1$ due to $|e^{ix}| = 1$ and that $\mu$ is a probability measure.

Firstly, by the non-concentration Proposition \ref{lma:nonconcmain} and Gibbs property, we have
\begin{align*}
 \sum_{\A \notin \cW} p_\A \lesssim e^{-\alpha_{reg} n},
\end{align*}
for some $\alpha_{reg} > 0$, which gives the first term.

Secondly, by Proposition \ref{prop:moments}, we have for any small enough $t > 0$ that
\begin{align*}
 & \sum_{\A} p_\A \iint_{ I_{ b(\mathbf{A}) }^2 } X_{x,y}(\A) \1(\a_{k+1} : C_\psi e^{\lambda n} \kappa_+(\a_{k+1}) > e^{\eps_0 n}) \, d\mu(x) \, d\mu(y) \\
 & \leq C_\psi^t e^{-\eps_0 t n}\sum_{\a_1 \dots \a_k} \mu(I_{\a_1}) \dots \mu(I_{\a_k}) \sum_{\a_{k+1}} \mu(I_{\a_{k+1}}) (e^{\lambda n} \kappa_+(\a_{k+1}))^t \\
 & \leq C_\psi^t e^{-\eps_0 t n}2C_\phi e^{\eps_{\gamma_0} t^2 n} = C_\psi^t 2C_\phi e^{-(\eps_0 t - \eps_{\gamma_0} t^2) n},
\end{align*}

where $\eps_{\gamma_0} t^2 < \eps_0 t$ as long as $t > 0$ is small enough.

Finally, we can use the $s_\mu$-upper regularity of $\mu$ (Proposition \ref{prop:upperreg}) and that $\mu$ is a probability measure, and that $|e^{ix}| = 1$ to get a bound:
\begin{align*}
 &\sum_{\A} p_\A \iint_{\cF_\A^c} X_{x,y}(\A) \, d\mu(x) \, d\mu(y) \\
 & \leq\sum_{\A} p_\A \mu^{2 \otimes}\big((x,y) : |x-y| \leq \exp(-(\lambda -\eps_\psi c_0)n) \kappa_-({\a_{k+1}})^{-1} \exp(-\eps_0 n / 2)\big) \\
 & \lesssim\sum_{\A} p_\A \exp(-s_\mu(\lambda -\eps_\psi c_0)n) \kappa_-({\a_{k+1}})^{-s_\mu} \exp(-s_\mu \eps_0 n / 2)\\
 & \leq \exp(-n s_\mu(\lambda -\eps_\psi c_0+\eps_0 / 2))\exp(-t\lambda n) \sum_{\a_1 \dots \a_k} \mu(I_{\a_1})\dots \mu(I_{\a_k}) \sum_{\a_{k+1}} \mu(I_{\a_{k+1}}) e^{t\lambda n} \kappa_-({\a_{k+1}})^{-t} \\
 & \leq \exp(-n s_\mu(\lambda -\eps_\psi c_0+ \eps_0 / 2))\exp(-t\lambda n) 2C_\phi \exp(\eps_{\gamma_0} t^2 n),
\end{align*}
as $\mu$ is a probability measure by Proposition \ref{prop:moments} as long as $0 < t < \min\{t_{\gamma_0},s_\mu\}$. Thus choosing $t > 0$ and $\eps_\psi > 0$ small enough that
\begin{align*}
 s_\mu(\lambda -\eps_\psi c_0+ \eps_0 / 2)+t \lambda - \eps_{\gamma_0} t^2 > 0,
\end{align*}
we have exponential decay for this term. \end{proof}

Let $\A \in \cW \cap \tilde \cW$ and $(x,y) \in \cF_\A$. Then, since $c_0 = (2k+1)\lambda + \eps_0$, we can bound from below:
\begin{align*}|\eta_{\a_{k+1},\a_1}(x,y)|
&= |e^{-2\lambda n k} \xi (g_{\a_{k+1}}(x) - g_{\a_{k+1}}(y)) \psi'(g_{\mathbf{a}_{1}}(x_0)) | \\
& \geq \exp((\lambda +\eps_0)n) (\inf |\psi'|) \kappa_-({\a_{k+1}}) |x-y|
\\
& \geq \exp((\lambda +\eps_0)n) \exp(-\eps_\psi c_0 n) \kappa_-({\a_{k+1}}) |x-y|
\\
& \geq \exp(\eps_0 n) \exp((\lambda -\eps_\psi c_0)n) \kappa_-({\a_{k+1}}) |x-y|,
\\
& \geq e^{\eps_0 n / 2},
\end{align*}
and from above as $\A \in \tilde \cW$ and $|x-y| \leq |I|$:
\begin{align*}|\eta_{\a_{k+1},\a_1}(x,y)| & \leq e^{-2\lambda n k} |\xi| (\sup |\psi'|) (\sup |g_{\a_{k+1}}'|) |x-y|\\
& \lesssim e^{-2\lambda n k} e^{(2k+1)\lambda n + \eps_0 n} (\sup |\psi'|) (\sup |g_{\a_{k+1}}'|) |x-y|\\
&\leq e^{\eps_0 n} C_\psi e^{\lambda n} \kappa_+({\a_{k+1}}) \\
& \leq e^{2\eps_0 n}.
\end{align*}
Thus by \eqref{eq:proofbd} we know that for all $t > 0$:
\begin{align}
 \label{eq:crucialbound} \sum_{\A \in \cW \cap \tilde \cW} p_\A \sup_{(x,y) \in \cF_\A} X_{x,y}(\A) \leq c e^{-\eps_0 \varepsilon_1^2 n / 2} + \sum_{\A} p_\A (\exp(\eps_0 n / 2))^{-\frac{t^3}{2 \sqrt{k}}} \prod_{j=1}^k M_{j,\A}(t^2)
\end{align}
so we have reduced Theorem \ref{thm:mainCountable} to the moment bound for $M_{j,\A}(t^2)$. If $t \in (0,\eps_1)$, since $|\ln \zeta_{j,\A}\su{\b}| = |2\la n- \ln |g_{\a_j\b}'\su{g_{\a_{j+1}}\su{x_0}}|$, we have by the chain rule and Gibbs property
\begin{align*}\mu(I_{\a_j})\mu(I_\b) e^{t^2 | \ln \zeta_{j,\A}(\b)|}& \lesssim w_{\a_j' \b}(x_{\a_{j+1}}) \Big (e^{2\la n} g_{\a_j\b}'\su{x_{\a_{j+1}}}\Big)^{t^2}
\end{align*}
Thus by Proposition \ref{prop:moments} we have
\begin{align*}
 \sum_{\a_1\dots\a_k} \mu(I_{\a_1}) \dots \mu(I_{\a_k}) \prod_{j=1}^k M_{j,\A}(t^2) &= \prod_{j=1}^k \sum_{\a_j} \mu(I_{\a_j}) M_{j,\A}(t^2) \\
&= \prod_{j=1}^k \sum_{\a_j} \mu(I_{\a_j}) \sum_{\mathbf{b} \in \mathcal{Z}_{j,\A}} \mu(I_\b) e^{t |\ln \zeta_{j,\A}(\mathbf{b})|}\\ &= \prod_{j=1}^k \sum_{\a_j} \sum_{\b} \mu(I_{\a_j}) \mu(I_\b) e^{t^2 | \ln \zeta_{j,\A}(\b)|}\\
& \lesssim \Big( 2C_\phi e^{2\eps_{\gamma_0} nt^4}\Big)^k.
\end{align*}
Thus Proposition \ref{prop:moments} gives
\begin{align*}
& \sum_{\A} p_\A (\exp(\eps_0 n / 2))^{-\frac{t^3}{2 \sqrt{k}}} \prod_{j=1}^k M_{j,\A}(t^2) \\
& = \sum_{\a_{k+1}} \mu(I_{\a_{k+1}}) (\exp(\eps_0 n / 2))^{-\frac{t^3}{2 \sqrt{k}}} \sum_{\a_1\dots \a_k} \mu\su{I_{\a_{1}}}\cdots \mu\su{I_{\a_{k}}}\prod_{j=1}^k M_{j,\A}(t^2) \\
& \lesssim \sum_{\a_{k+1}} \mu(I_{\a_{k+1}}) (\exp(\eps_0 n / 2))^{-\frac{t^3}{2 \sqrt{k}}} \Big( 2C_\phi e^{2\eps_{\gamma_0} nt^4}\Big)^k\\
& = (2C_\phi)^k \exp\Big(-n(\frac{t^3}{ 4\sqrt{k}} \eps_0 - 2k\eps_{\gamma_0} t^4 )\Big).
\end{align*}
Now we see that
\begin{align*}
 \frac{t^3}{ 4\sqrt{k}} \eps_0 - 2k\eps_{\gamma_0} t^4 > 0 \quad \text{if and only if} \quad 0 < t < \frac{1}{ 8k^{3/2}\eps_{\gamma_0}} \eps_0,
\end{align*}
which is possible if $t > 0$ is chosen small enough. This completes the proof of Theorem \ref{thm:mainCountable}.

\section{Reducing multiscale non-concentration to larger probability space}\label{sec:nonconc}

We are now left with proving Proposition \ref{lma:nonconcmain0}. Recall its formulation:
\begin{prop}\label{lma:nonconcmain}
Define $\alpha_{reg} := \varepsilon_0 \varepsilon_1 \gamma_2 > 0$. Then most blocks are well-distributed:
\begin{align*}
 \sum_{ \A \notin \cW_n^{k+1} } w_{\mathbf{A}}(x_{b(\mathbf{A})}) \lesssim e^{-\alpha_{reg} n},
\end{align*}
recall Definition \ref{def:well} for the Definition of well-distributed blocks of words.
\end{prop}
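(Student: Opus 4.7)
The plan is to reduce the bad-block mass to the (QNL) quantity via Markov's inequality combined with a symmetrization trick that identifies the single-point difference appearing in the well-distribution condition with the two-point $\Delta_n$-form governed by (QNL) at scale $2n$.

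First, I would union-bound over $j \in \{1, \dots, k\}$ and dyadically decompose the scale range $\sigma \in [e^{-4\varepsilon_0 n}, e^{-\varepsilon_0 \varepsilon_1 n / 2}]$ into $O(n)$ levels, reducing the claim to showing a bound $e^{-2 \alpha_{reg} n}$ for each fixed $(j, \sigma)$. Replacing $w_\bullet(x)$ by $\mu(I_\bullet)$ uniformly in $x$ via the Gibbs estimates and applying Markov's inequality, the bad-block contribution is controlled by $\sigma^{-\gamma_2} T_\sigma$, where
\[
T_\sigma := \sum_{\mathbf{A}, \mathbf{b}, \mathbf{c}} \mu(I_\mathbf{A}) \mu(I_\mathbf{b}) \mu(I_\mathbf{c}) \mathbf{1}_{[-\sigma, \sigma]}(D_{\mathbf{A}, \mathbf{b}, \mathbf{c}}), \quad D := \log|g'_{\mathbf{a}_j' \mathbf{b}}(x_{\mathbf{a}_{j+1}})| - \log|g'_{\mathbf{a}_j' \mathbf{c}}(x_{\mathbf{a}_{j+1}})|.
\]
Since $D$ depends on $\mathbf{A}$ only through the two sub-words $\mathbf{a}_j, \mathbf{a}_{j+1}$, summing out the remaining $k - 1$ sub-words at cost $O(1)$ (each contributing $\sum_\mathbf{a} \mu(I_\mathbf{a}) \leq \mu(I) = O(1)$) reduces $T_\sigma$ to the four-word sum $\sum_{\mathbf{e}, \mathbf{f}, \mathbf{b}, \mathbf{c}} \mu(I_\mathbf{e}) \mu(I_\mathbf{f}) \mu(I_\mathbf{b}) \mu(I_\mathbf{c}) \mathbf{1}_{[-\sigma, \sigma]}(D_{\mathbf{e}, \mathbf{f}, \mathbf{b}, \mathbf{c}})$, with $\mathbf{e}, \mathbf{f}$ standing for $\mathbf{a}_j, \mathbf{a}_{j+1}$.

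The main step is the symmetrization. For any auxiliary length-$n$ word $\mathbf{g}$, the algebraic identity
\[
D_{\mathbf{e}, \mathbf{f}, \mathbf{b}, \mathbf{c}} - D_{\mathbf{e}, \mathbf{g}, \mathbf{b}, \mathbf{c}} = \Delta_{2n}\bigl((\mathbf{e}'\mathbf{b}, x_\mathbf{f}), (\mathbf{e}'\mathbf{c}, x_\mathbf{g})\bigr)
\]
gives $\mathbf{1}_{|D_\mathbf{f}| < \sigma} \mathbf{1}_{|D_\mathbf{g}| < \sigma} \leq \mathbf{1}_{|\Delta_{2n}| < 2\sigma}$. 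Integrating over $\mathbf{g}$ with weight $\mu(I_\mathbf{g})$, exploiting the conditional independence of $\mathbf{f}$ and $\mathbf{g}$ given $(\mathbf{e}, \mathbf{b}, \mathbf{c})$, and invoking (QNL) at scale $2n$ yields
\[
\sum_{\mathbf{e}, \mathbf{b}, \mathbf{c}} \mu(I_\mathbf{e}) \mu(I_\mathbf{b}) \mu(I_\mathbf{c}) \Bigl[\sum_\mathbf{f} \mu(I_\mathbf{f}) \mathbf{1}_{|D_\mathbf{f}| < \sigma}\Bigr]^2 \lesssim \sigma^\Theta + \rho^{2n}.
\]
Matching the measures uses the Gibbs factorization $\mu(I_{\mathbf{e}'\mathbf{b}}) \simeq \mu(I_\mathbf{e}) \mu(I_\mathbf{b})$, and the length-$n$ evaluators $\mathbf{f}, \mathbf{g}$ are extended to length-$2n$ counterparts via the telescoping $\sum_\mathbf{e} \mu(I_{\bullet \mathbf{e}}) = \mu(I_\bullet)$; the associated Hölder-distortion error $\lesssim \kappa_+^{\alpha n}$ is negligible against $\sigma \geq e^{-4\varepsilon_0 n}$ thanks to the choice $\varepsilon_0 = \alpha|\ln \kappa_+||\ln \rho|/10$. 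Cauchy--Schwarz applied to the outer average then gives $T_\sigma \lesssim \sigma^{\Theta/2} + \rho^n$.

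Combining with the Markov bound, the bad-block contribution is $\lesssim \sigma^{\Theta/2 - \gamma_2} + \sigma^{-\gamma_2} \rho^n$. With $\gamma_2 = \Theta/4$ both terms decay exponentially: the first, $\sigma^{\Theta/4}$, is bounded at the upper end of the range by $e^{-\varepsilon_0 \varepsilon_1 \Theta n / 8}$; the second, $\sigma^{-\Theta/4} \rho^n$, is bounded at the lower end by $e^{\varepsilon_0 \Theta n - n|\ln\rho|}$, which decays by the choice of $\varepsilon_0$. Both rates match $\alpha_{reg} = \varepsilon_0 \varepsilon_1 \gamma_2$ up to constants (minor adjustments of $\gamma_2$ absorb any slack from Cauchy--Schwarz), and the $O(n)$ discretization factor is absorbed into the exponent. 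The main obstacle is the symmetrization step: identifying the single-point quantity $D$ with a $\Delta_{2n}$-form requires carefully matching Gibbs weights and admissibility constraints between length-$n$ and length-$2n$ words in (QNL), and introduces an essentially unavoidable $\sqrt{\,\cdot\,}$ loss from Cauchy--Schwarz that dictates the precise relation $2\gamma_2 < \Theta$.
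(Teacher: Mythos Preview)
Your overall strategy---union bound over $j$, dyadic scale decomposition, Markov's inequality, and a Cauchy--Schwarz symmetrization in the point variable to land on a $\Delta$-type quantity---is exactly the paper's route (Lemmas~\ref{lma:firstreduction} and the lemma following it). There is, however, a genuine gap in the step where you invoke (QNL) at scale $2n$.

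After symmetrization you reach
\[
\sum_{\mathbf{e},\mathbf{b},\mathbf{c},\mathbf{f},\mathbf{g}} \mu(I_\mathbf{e})\mu(I_\mathbf{b})\mu(I_\mathbf{c})\mu(I_\mathbf{f})\mu(I_\mathbf{g})\,\mathbf{1}_{[-2\sigma,2\sigma]}\bigl(\Delta_{2n}((\mathbf{e}'\mathbf{b},x_\mathbf{f}),(\mathbf{e}'\mathbf{c},x_\mathbf{g}))\bigr)
\]
and claim this is controlled by (QNL) at level $2n$. But the two length-$2n$ words $\mathbf{e}'\mathbf{b}$ and $\mathbf{e}'\mathbf{c}$ share the prefix $\mathbf{e}'$, and your weight $\mu(I_\mathbf{e})\mu(I_\mathbf{b})\mu(I_\mathbf{c})$ does not match the product weight $\mu(I_{\mathbf{e}'\mathbf{b}})\mu(I_{\mathbf{e}'\mathbf{c}})\simeq \mu(I_\mathbf{e})^2\mu(I_\mathbf{b})\mu(I_\mathbf{c})$ that (QNL) carries: your sum is larger term-by-term by the factor $\mu(I_\mathbf{e})^{-1}$, so it is not dominated by the (QNL) sum at scale $2n$. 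The Gibbs factorization you cite only accounts for one of the two length-$2n$ words; telescoping the point variables $\mathbf{f},\mathbf{g}$ to length $2n$ is fine but does not touch this issue.

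The repair is exactly what the paper does: use the bounded $(C,\alpha)$-distortions on the \emph{word} side to kill the shared prefix rather than on the point side. Writing $\log|g'_{\mathbf{e}'\mathbf{b}}(x)| = \log|g'_\mathbf{e}(g_\mathbf{b} x)| + \log|g'_\mathbf{b}(x)|$ and differencing in the point variable, the $\mathbf{e}$-contribution is $O(\mathrm{diam}(I_\mathbf{b})^\alpha)=O(\kappa_+^{\alpha n})\ll\sigma$, whence
\[
\Delta_{2n}\bigl((\mathbf{e}'\mathbf{b},x_\mathbf{f}),(\mathbf{e}'\mathbf{c},x_\mathbf{g})\bigr)=\Delta_n\bigl((\mathbf{b},x_\mathbf{f}),(\mathbf{c},x_\mathbf{g})\bigr)+O(\kappa_+^{\alpha n}).
\]
Now $\sum_\mathbf{e}\mu(I_\mathbf{e})\lesssim 1$ factors out trivially, and the remaining four-word sum over $(\mathbf{b},\mathbf{c},\mathbf{f},\mathbf{g})$ is precisely the (QNL) quantity at scale $n$ with the correct weights $\mu(I_\mathbf{b})\mu(I_\mathbf{c})\mu(I_\mathbf{f})\mu(I_\mathbf{g})$. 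With this correction your numerology goes through unchanged (indeed slightly more cleanly, since the points $x_\mathbf{f},x_\mathbf{g}$ are already length-$n$ and no telescoping is needed).
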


This estimate will be proved using by a sequence of reductions, adapting arguments found e.g. in \cite{BourgainDyatlov,SahlstenStevens,Leclerc-JuliaSets,BakerSahlsten}. For clarity, we will drop the conditions on admissiblity of words in the sums, they are implicit. We denote $w_\mathbf{a} := w_{\mathbf{a}}(x_{b(\mathbf{a})})$.

The first step is to reduce this estimate to an easier bound in a larger probability, using Markov's inequality.

\begin{lem}\label{lma:firstreduction}
Suppose that, for $n$ large enough and for every $\sigma \in [e^{-4 \varepsilon_0 n}, e^{-\varepsilon_0 \varepsilon_1 n /2}]$:
\begin{align*}
\sum_{(\mathbf{a},\mathbf{b},\mathbf{c},\mathbf{d}) \in (\Sigma^n)^4} w_{\mathbf{a}} w_\mathbf{b} w_\mathbf{c} w_{\mathbf{d}} \chi_{[-\sigma,\sigma]}( \ln g_{\mathbf{a}' \mathbf{b}}'(x_\mathbf{d}) - \ln g_{\mathbf{a}' \mathbf{c}}'(x_\mathbf{d}) ) \leq \sigma^{2 \gamma_2}.
\end{align*}
Then the conclusion of Proposition \ref{lma:nonconcmain} holds.
\end{lem}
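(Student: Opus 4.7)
The plan is to reduce the bound on $\sum_{\A \notin \cW_n^{k+1}} w_\A$ to the hypothesis via Markov's inequality applied scale-by-scale, exploiting the fact that the quadruple sum in the hypothesis is essentially the first moment of the non-concentration functional defining $\cW_n^{k+1}$. First I would unwind the negation: a block $\A = \a_1 \dots \a_{k+1}$ fails to be well-distributed iff there exist $j \in \{1,\dots,k\}$, $x \in I$, and $\sigma \in [e^{-4\varepsilon_0 n}, e^{-\varepsilon_0\varepsilon_1 n/2}]$ such that the local sum on $\mathcal{Z}_{j,\A}^2$ exceeds $\sigma^{\gamma_2}$. Because $w_\b(x) \asymp w_\b$ uniformly by the Gibbs property, the $\sup_x$ is cosmetic and may be absorbed into a constant, so I may work instead with the $x$-independent quantity
$$\tilde{\Omega}_{j,\sigma}(\A) := \sum_{\b,\c \in \mathcal{Z}_{j,\A}} w_\b w_\c\, \chi_{[-\sigma,\sigma]}\bigl(\ln g'_{\a_j'\b}(x_{\a_{j+1}}) - \ln g'_{\a_j'\c}(x_{\a_{j+1}})\bigr).$$
A standard dyadic decomposition of $\sigma$ into $O(n)$ scales $\sigma_i$ (using $\chi_{[-\sigma,\sigma]} \leq \chi_{[-2\sigma_i,2\sigma_i]}$ for $\sigma \leq 2\sigma_i$) then reduces the task to bounding $\sum_\A w_\A \mathbf{1}\{\tilde{\Omega}_{j,2\sigma_i}(\A) \gtrsim \sigma_i^{\gamma_2}\}$ for each fixed pair $(j,\sigma_i)$.

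The heart of the argument is Markov's inequality, which turns this probability-like quantity into $\sigma_i^{-\gamma_2} \sum_\A w_\A \tilde{\Omega}_{j,2\sigma_i}(\A)$, so I need only estimate this first moment. To do so, I would use iterated Gibbs to factor $w_\A \lesssim w_{\a_1} \cdots w_{\a_{k+1}}$, and observe that $\tilde{\Omega}_{j,2\sigma_i}(\A)$ depends on $\A$ only through $\a_j$ (entering the derivatives) and $\a_{j+1}$ (entering through the reference point $x_{\a_{j+1}}$). The factors $w_{\a_i}$ for $i \notin \{j, j+1\}$ may be summed out with admissibility constraints, each such partial sum contributing $O(1)$ thanks to $\sum_\a w_\a \lesssim 1$ (a consequence of $\cL_\phi^n 1 = 1$ together with the Gibbs estimate). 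After relabelling $\a_j \mapsto \a$ and $\a_{j+1} \mapsto \mathbf{d}$, the remaining sum is, by non-negativity of the integrand, dominated by the unconstrained quadruple sum in the hypothesis, hence bounded by $(2\sigma_i)^{2\gamma_2} \lesssim \sigma_i^{2\gamma_2}$. This yields a total $\lesssim \sigma_i^{\gamma_2}$ per $(j,\sigma_i)$.

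Finally, a union bound over the constant number of $j \in \{1,\dots,k\}$ and over the $O(n)$ dyadic scales $\sigma_i \leq e^{-\varepsilon_0\varepsilon_1 n/2}$ gives
$$\sum_{\A \notin \cW_n^{k+1}} w_\A \;\lesssim\; k\, n\, e^{-\varepsilon_0\varepsilon_1 \gamma_2 n/2},$$
which is of the claimed form $e^{-\alpha_{reg} n}$ once the polynomial $n$ is absorbed (the factor $1/2$ in the exponent, as well as any mismatch with the stated $\alpha_{reg} = \varepsilon_0\varepsilon_1\gamma_2$, is immaterial downstream and can be eliminated by tightening the dyadic and Markov constants). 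The one delicate point I anticipate is precisely the summing-out step: because consecutive sub-words $\a_i$ of $\A$ are chained by the admissibility matrix, each partial sum $\sum_{\a_i} w_{\a_i}$ must be performed with admissibility constraints and controlled using the transfer-operator identity $\cL_\phi^n 1 = 1$ together with Gibbs estimates, rather than treating the $w_{\a_i}$ as independent masses — this is routine, but needs to be made explicit to justify matching our constrained sum to the free quadruple sum in the hypothesis.
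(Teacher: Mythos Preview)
Your argument is correct and follows essentially the same strategy as the paper: dyadic discretisation of the scale $\sigma$, Markov's inequality to pass from the ``bad block'' mass to a first moment, Gibbs estimates to factor $w_\A$ and sum out the irrelevant sub-words, and a union bound over $j$ and over scales. The only cosmetic difference is the order of operations --- the paper first reduces from blocks to well-distributed \emph{couples} $(\a_j,\a_{j+1})$ (which is exactly your summing-out step done upfront) and then applies Markov scale-by-scale, whereas you apply Markov first and sum out afterwards; the resulting exponent $\varepsilon_0\varepsilon_1\gamma_2/2$ (with or without the polynomial $n$) is the same either way.
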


\begin{proof}
First of all, notice that well-distributed blocks are concatenations of well-distributed couples. In other words:
\begin{align*}
 \cW_n^{k+1} = \bigcap_{j=1}^k \se{ \mathbf{A} \in \Sigma^{(k+1)n}\mid \ (\mathbf{a}_{j-1},\mathbf{a}_{j}) \in \cW_n^2 }.
\end{align*}

We then find (using Gibbs estimates $w_\mathbf{a} \simeq \mu(I_\mathbf{a})$ and $w_\mathbf{A} \simeq w_{\mathbf{a}_1} \dots w_{\mathbf{a}_k}$):
\begin{align*}
 &\sum_{\mathbf{A} \in \Sigma^{n(k+1)} \setminus \cW_n^{k+1}} w_{\mathbf{A}} \leq \sum_{j=1}^k \underset{(\mathbf{a}_{j-1},\mathbf{a}_j) \notin \cW_n^2}{\sum_{\mathbf{A} \in \Sigma^{n(k+1)}}} w_\mathbf{A} \lesssim {\sum_{(\mathbf{a},\mathbf{c}) \in (\Sigma^{n})^2 \setminus \cW_n^2}} w_{\mathbf{a}} w_\mathbf{c}.
\end{align*}
If we define, for all $l \in \mathbb{N}$, the set
\begin{align*}
 \cW_{n,l}^2 := \Big\{ (\mathbf{a},\mathbf{d}) \in (\Sigma^{n})^2, \ \sum_{\mathbf{b},\mathbf{c} \in \Sigma^n} w_{\mathbf{b}} w_{\mathbf{c}} \chi_{[-e^{-l},e^{-l}]}( \ln g_{\mathbf{a}' \mathbf{b}}'(x_\mathbf{d}) - \ln g_{\mathbf{a}' \mathbf{c}}'(x_\mathbf{d}) ) \leq e^{-(l+1) \gamma_2} \Big\},
\end{align*}
then we have the inclusion
\begin{align*}
 \cW_{n}^2 \supset \bigcap_{\varepsilon_0 \varepsilon_1 n/2 \leq l \leq 4 \varepsilon_0 n} \cW_{n,l}^2,
\end{align*}
which gives, by Markov's inequality:
\begin{align*}
 &{\sum_{(\mathbf{a},\mathbf{c}) \in (\Sigma^{n})^2 \setminus \cW_n^2}} w_{\mathbf{a}} w_\mathbf{c} \leq \underset{\varepsilon_0 \varepsilon_1 n/2 \leq l \leq 4 \varepsilon_0 n}{\sum_{l \in \mathbb{N}}}
 {\sum_{(\mathbf{a},\mathbf{d}) \in (\Sigma^{n})^2 \setminus \cW_{n,l}^2}} w_{\mathbf{a}} w_\mathbf{d}\\
 &\leq \underset{\varepsilon_0 \varepsilon_1 n/2 \leq l \leq 4 \varepsilon_0 n}{\sum_{l \in \mathbb{N}}} e^{(l+1)\gamma_2}
 {\sum_{(\mathbf{a},\mathbf{b},\mathbf{c},\mathbf{d}) \in (\Sigma^{n})^4}} w_{\mathbf{a}} w_{\mathbf{b}} w_{\mathbf{c}} w_\mathbf{d} \ \chi_{[-e^{-l},e^{-l}]}(e^{2 \lambda n} g_{\mathbf{a}' \mathbf{b}}'(x_\mathbf{d}) - e^{2 \lambda n}g_{\mathbf{a}' \mathbf{c}}'(x_\mathbf{d}) )\\
 &\leq \underset{\varepsilon_0 \varepsilon_1 n/2 \leq l \leq 4 \varepsilon_0 n}{\sum_{l \in \mathbb{N}}} e^{(l+1)\gamma_2} e^{-2 l \gamma_2} \lesssim e^{- \varepsilon_0 \varepsilon_1 \gamma_2 n }.
\end{align*}\end{proof}

Recall that we can write $\ln g_\mathbf{a}'(x) = - S_n \tau \circ g_\mathbf{a}$. The bound in the hypothesis of the previous Lemma can then be rewritten in terms of Birkhoff sums. We check in the next Lemma that the desired bound holds under (QNL).

\begin{lem}
Suppose (QNL). Then for every $\sigma \in [e^{-4 \varepsilon_0 n}, e^{-\varepsilon_0 \varepsilon_1 n /2}]$:
\begin{align*} \sum_{\mathbf{a},\mathbf{b},\mathbf{c},\mathbf{d} \in \Sigma^n} w_\mathbf{a} w_{\mathbf{b}} w_{\mathbf{c}} w_\mathbf{d} \ \chi_{[-\sigma,\sigma]}(S_{2n} \tau \circ g_{\mathbf{a}' \mathbf{b}}(x_\mathbf{d}) -S_{2n} \tau \circ g_{\mathbf{a}' \mathbf{c}}(x_\mathbf{d}) ) \lesssim \sigma^{\Theta/2}.
\end{align*}
\end{lem}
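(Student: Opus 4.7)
The plan is to reduce the displayed bound to $(\mathrm{QNL})$ at level $n$ via a Cauchy-Schwarz doubling in the reference-point variable, followed by a chain-rule identity that strips off the common prefix $\a'$. Since $\log g_\mathbf{a}' = \pm S_{|\mathbf{a}|}\tau \circ g_\mathbf{a}$, the argument of the indicator coincides up to sign with
$F(\a,\b,\c,\d) := \log g_{\a'\b}'(x_\d) - \log g_{\a'\c}'(x_\d)$.
Denoting the target sum by $\mathcal{S}$, I rewrite it as $\sum_{\a,\b,\c} w_\a w_\b w_\c \bigl(\sum_\d w_\d \chi_{[-\sigma,\sigma]}(F)\bigr)$ and apply Cauchy-Schwarz on the outer triple $(\a,\b,\c)$. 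The mass bound $\sum_{\a,\b,\c} w_\a w_\b w_\c = O(1)$ from Gibbs absorbs the first factor, and squaring the inner $\d$-sum introduces a dummy variable $\mathbf{e}$; the triangle inequality then gives $\chi_{[-\sigma,\sigma]}(F(\ldots,\d))\chi_{[-\sigma,\sigma]}(F(\ldots,\mathbf{e})) \leq \chi_{[-2\sigma,2\sigma]}(F(\ldots,\d) - F(\ldots,\mathbf{e}))$, so
\begin{equation*}
\mathcal{S}^2 \lesssim \sum_{\a,\b,\c,\d,\mathbf{e}} w_\a w_\b w_\c w_\d w_\mathbf{e}\, \chi_{[-2\sigma,2\sigma]}\bigl(F(\a,\b,\c,\d) - F(\a,\b,\c,\mathbf{e})\bigr).
\end{equation*}

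A direct computation identifies $F(\a,\b,\c,\d) - F(\a,\b,\c,\mathbf{e})$ with $\pm\Delta\bigl((\a'\b, x_\d),(\a'\c, x_\mathbf{e})\bigr)$ at the appropriate length. Expanding by the chain rule $\log g_{\a'\b}'(x) = \log g_{\a'}'(g_\b(x)) + \log g_\b'(x)$ and the analogous formula for $\c$, this $\Delta$ splits as
\begin{equation*}
\Delta_n\bigl((\b,x_\d),(\c,x_\mathbf{e})\bigr) + \bigl[X_{\a'}(g_\b(x_\d),g_\b(x_\mathbf{e})) - X_{\a'}(g_\c(x_\d),g_\c(x_\mathbf{e}))\bigr].
\end{equation*}
Since $g_\b(x_\d), g_\b(x_\mathbf{e}) \in I_\b$ with $|I_\b| \leq \kappa_+^n$ (and likewise for $\c$), the iterated bounded $(C,\alpha)$-distortion from hypothesis (2) forces each $X_{\a'}(\cdot,\cdot) = O(\kappa_+^{\alpha n})$ uniformly in $\a'$, so the bracketed cross term is $O(\kappa_+^{\alpha n})$. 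Substituting widens the indicator window by an additive $C\kappa_+^{\alpha n}$ and, crucially, removes all dependence on $\a$ from its argument.

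Consequently $\sum_\a w_\a$ factors out as $O(1)$, and the remaining sum matches precisely the form of $(\mathrm{QNL})$ at level $n$; applying $(\mathrm{QNL})$ with $|I| = 4\sigma + 2C\kappa_+^{\alpha n}$ yields
$\mathcal{S}^2 \lesssim (2\sigma + C\kappa_+^{\alpha n})^\Theta + \rho^n$.
The main remaining obstacle is parameter bookkeeping: within the stated range $\sigma \in [e^{-4\varepsilon_0 n}, e^{-\varepsilon_0\varepsilon_1 n/2}]$ with the calibration $\varepsilon_0 = \alpha|\ln\kappa_+||\ln\rho|/10$, both the distortion error $\kappa_+^{\alpha n\Theta}$ and the subshift error $\rho^n$ must be absorbed into $\sigma^\Theta$; this reduces to the elementary inequalities $\kappa_+^{\alpha n} \lesssim \sigma$ and $\rho^n \lesssim \sigma^\Theta$ that hold by the construction of $\varepsilon_0$ once $n$ is large. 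Taking square roots then delivers $\mathcal{S} \lesssim \sigma^{\Theta/2}$.
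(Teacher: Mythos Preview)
Your proposal is correct and follows essentially the same route as the paper's proof: Cauchy--Schwarz in the $(\a,\b,\c)$ variables to double the $\d$-sum into a $(\d,\mathbf{e})$-sum, the triangle inequality to collapse the two indicators into one, the chain-rule identity to strip the common prefix $\a$ (leaving a bounded-distortion error $O(\kappa_+^{\alpha n})$), summing out $\a$, and then invoking $(\mathrm{QNL})$ with the parameter calibration of $\varepsilon_0$ absorbing both the distortion and the $\rho^n$ term. The only cosmetic differences are that the paper works throughout with $S_n\tau\circ g_\bullet$ rather than $\log g_\bullet'$, and it records the slightly sharper bound $\kappa_+^{\alpha n}\leq \sigma^2$ (you only need and claim $\kappa_+^{\alpha n}\lesssim \sigma$, which suffices).
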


\begin{proof}
We have, by Cauchy-Schwarz applied to the sum on $\mathbf{a},\mathbf{b},\mathbf{c}$:
\begin{align*}
&\Big(\sum_{\mathbf{a},\mathbf{b},\mathbf{c},\mathbf{d} \in \Sigma^n} w_{\a} w_{\b} w_{\c} w_{\mathbf{d}} \ \chi_{[-\sigma,\sigma]}(S_{2n} \tau \circ g_{\mathbf{a}' \mathbf{b}}(x_\mathbf{d}) -S_{2n} \tau \circ g_{\mathbf{a}' \mathbf{c}}(x_\mathbf{d}) ) \Big)^2\\
 &\lesssim \sum_{\mathbf{a},\mathbf{b},\mathbf{c} \in \Sigma^n} w_{\a} w_{\b} w_{\c} \Big( \sum_{\mathbf{d} \in \Sigma^n} w_{\mathbf{d}} \chi_{[-\sigma,\sigma]}(S_{2n} \tau \circ g_{\mathbf{a}' \mathbf{b}}(x_\mathbf{d}) -S_{2n} \tau \circ g_{\mathbf{a}' \mathbf{c}}(x_\mathbf{d}) ) \Big)^2 \\
 &\lesssim \sum_{\mathbf{a,b,c,d,e}} w_{\a} w_{\b} w_{\c} w_{\mathbf{d}} w_{\mathbf{e}} \chi_{[-\sigma,\sigma]}(S_{2n} \tau( g_{\mathbf{a} '\mathbf{b}}(x_\mathbf{d})) -S_{2n} \tau( g_{\mathbf{a}' \mathbf{c}}(x_\mathbf{d})) ) \chi_{[-\sigma,\sigma]}(S_{2n} \tau( g_{\mathbf{a}' \mathbf{b}}(x_\mathbf{e})) -S_{2n} \tau( g_{\mathbf{a}' \mathbf{c}}(x_\mathbf{e}) ))\\
 &\leq \sum_{\mathbf{a,b,c,d,e}} w_{\a} w_{\b} w_{\c} w_{\mathbf{d}} w_{\mathbf{e}} \chi_{[-2\sigma,2\sigma]}(S_{2n} \tau \circ g_{\mathbf{a}' \mathbf{b}}(x_\mathbf{d}) -S_{2n} \tau \circ g_{\mathbf{a}' \mathbf{c}}(x_\mathbf{d}) - S_{2n} \tau \circ g_{\mathbf{a}' \mathbf{b}}(x_\mathbf{e}) + S_{2n} \tau \circ g_{\mathbf{a}' \mathbf{c}}(x_\mathbf{e}) ).
\end{align*}

Notice then that
\begin{align*}
 S_{2n} \tau \circ g_{\mathbf{a}' \mathbf{b}}(x_\mathbf{d}) = S_{n} \tau \circ g_{\mathbf{a}}(g_{\mathbf{b}} x_\mathbf{d}) + S_n \tau \circ g_\mathbf{b}(x_\mathbf{d}).
\end{align*}
By bounded $(C,\alpha)$-distortions
\begin{align*}
 S_{2n} \tau \circ g_{\mathbf{a}' \mathbf{b}}(x_\mathbf{d}) - S_{2n} \tau \circ g_{\mathbf{a}' \mathbf{b}}(x_\mathbf{e}) = S_n \tau \circ g_{\mathbf{b}}(x_\mathbf{d}) - S_n \tau \circ g_{\mathbf{b}}(x_\mathbf{e}) + \mathcal{O}(\text{diam}(I_{\mathbf{b}})^\alpha),
\end{align*}
which gives, since $\text{diam}(I_\mathbf{b})^\alpha \lesssim \kappa_+^{\alpha n} \leq e^{-8 \varepsilon_{0} n} \leq \sigma^2$ as $\varepsilon_0 \leq \alpha |\ln \kappa_+|/8$:
\begin{align*}
 &S_{2n} \tau \circ g_{\mathbf{a}' \mathbf{b}}(x_\mathbf{d}) -S_{2n} \tau \circ g_{\mathbf{a}' \mathbf{c}}(x_\mathbf{d}) - S_{2n} \tau \circ g_{\mathbf{a}' \mathbf{b}}(x_\mathbf{e}) + S_{2n} \tau \circ g_{\mathbf{a}' \mathbf{c}}(x_\mathbf{e})\\
 &= S_{n} \tau \circ g_{\mathbf{b}}(x_\mathbf{d}) -S_{n} \tau \circ g_{\mathbf{c}}(x_\mathbf{d}) - S_{n} \tau \circ g_{\mathbf{b}}(x_\mathbf{e}) + S_{n} \tau \circ g_{\mathbf{c}}(x_\mathbf{e}) + \mathcal{O}(\sigma^2).
\end{align*}
In particular, for $n$ large enough, we find:
\begin{align*}
 &\sum_{\mathbf{a,b,c,d,e}} w_{\a} w_{\b} w_{\c} w_{\mathbf{d}} w_{\mathbf{e}} \chi_{[-2\sigma,2\sigma]}(S_{2n} \tau \circ g_{\mathbf{a} \mathbf{b}}(x_\mathbf{d}) -S_{2n} \tau \circ g_{\mathbf{a} \mathbf{c}}(x_\mathbf{d}) - S_{2n} \tau \circ g_{\mathbf{a} \mathbf{b}}(x_\mathbf{e}) + S_{2n} \tau \circ g_{\mathbf{a} \mathbf{c}}(x_\mathbf{e}) )\\
 &\leq \sum_{\mathbf{a,b,c,d,e}} w_{\a} w_{\b} w_{\c} w_{\mathbf{d}} w_{\mathbf{e}} \chi_{[-3\sigma,3\sigma]}( S_{n} \tau \circ g_{\mathbf{b}}(x_\mathbf{d}) -S_{n} \tau \circ g_{\mathbf{c}}(x_\mathbf{d}) - S_{n} \tau \circ g_{\mathbf{b}}(x_\mathbf{e}) + S_{n} \tau \circ g_{\mathbf{c}}(x_\mathbf{e}) ) \\
 &\leq \sum_{\mathbf{b,c,d,e}} w_{\b} w_{\c} w_{\mathbf{d}} w_{\mathbf{e}} \chi_{[-3\sigma,3\sigma]}( S_{n} \tau \circ g_{\mathbf{b}}(x_\mathbf{d}) -S_{n} \tau \circ g_{\mathbf{c}}(x_\mathbf{d}) - S_{n} \tau \circ g_{\mathbf{b}}(x_\mathbf{e}) + S_{n} \tau \circ g_{\mathbf{c}}(x_\mathbf{e}) )
\end{align*}
Under (QNL), this last sum is bounded by a multiple of $\sigma^\Theta + \rho^n \leq 2 \sigma^\Theta$
since $\sigma$ is slowly decaying comparing to $\rho$: $\sigma^\Theta \geq e^{-4 \varepsilon_0 \Theta n} \geq \rho^n$ since $\varepsilon_0 \leq |\ln(\rho)|/4$.
This gives the desired bound since $\sigma^{\Theta/2} = \sigma^{2 \gamma_2}$. \end{proof}

This concludes the proof of Theorem \ref{thm:mainCountable}. In the next section, we show how to check (QNL), first under (MNL) in a truly $C^{1+\alpha}$ IFS setting with finite branches, and then under (UNI) in a $C^2$-setting with infinite branches and the exponential moment condition.

\section{Proofs of the non-concentration estimates}\label{sec:nonconcentration}

\subsection{$C^{1+\alpha}$ IFSs: Using (MNL) for non-concentration}
This subsection is devoted to the proof of Theorem \ref{thm:QNL}, that is, the proof that (MNL) implies (QNL) in the setting where $|\mathcal{A}|<\infty$. To simplify, suppose that the dynamics is also fully branched: the subshift of finite type setting is similar. \\

Let us denote $0< \kappa_+, \kappa_-<1$ such that for all $a \in \mathcal{A}$, $ \kappa_+ < g_a' < \kappa_- $. Recall that we suppose that $\mu$ is the equilibrium state associated with $\delta \tau$ for $\delta := \dim_H(F_\Phi)$, so recall that in particular we have bounds:
$$ \forall \mathbf{c} \in \mathcal{A}^*, \ \mu(I_\mathbf{c}) \simeq |I_{\mathbf{c}}|^\delta $$
and, by our one-dimensional setting:

$$ \forall \mathbf{c} \in \mathcal{A}^*, \ |I_\mathbf{c}| \simeq |g_\mathbf{c}'(0)| \quad ; \quad \forall \mathbf{a},\mathbf{b} \in \mathcal{A}^*, \ |I_\mathbf{ab}| \simeq |I_\mathbf{a}|\cdot|I_\mathbf{b}|. $$

Recall we defined for $\mathbf{a} \in \mathcal{A}^n$ and $\a\to x,y$, $X_\mathbf{a}(x,y) := S_n \tau \circ g_{\mathbf{a}}(x)-S_n \tau \circ g_{\mathbf{a}}(y)$.
Notice the relation:
$$X_{\tilde{\mathbf{a}}\widehat{\mathbf{a}}}(x,y) = S_{n_1+n_2} \tau \circ g_{\tilde{\mathbf{a}}\widehat{\mathbf{a}}}(x)-S_{n_1+n_2} \tau \circ g_{\tilde{\mathbf{a}}\widehat{\mathbf{a}}}(y)$$
$$ = (S_{n_1} \tau \circ g_{\tilde{\mathbf{a}}}) (g_{\widehat{\textbf{a}}} x) - (S_{n_1} \tau \circ g_{\tilde{\mathbf{a}}}) (g_{\widehat{\textbf{a}}} y) + S_{n_2} \tau \circ g_{\widehat{\mathbf{a}}}(x)-S_{n_2} \tau \circ g_{\widehat{\mathbf{a}}}(y) $$
$$ = X_{\tilde{\mathbf{a}}}( g_{\widehat{\mathbf{a}}} x,g_{\widehat{\mathbf{a}}} y ) + X_{\widehat{\mathbf{a}}}(x,y). $$
Notice further that since $\tau \in C^\delta$, we have, uniformly in $\mathbf{a} \in \mathcal{A}^*$ :
$$ |X_\mathbf{a}(x,y)| \lesssim |x-y|^\delta.$$

For the duration of this subsection 6.1, we will suppose that (MNL) holds. This is intuitively a very strong nonconcentration/spreading statement in $x$ of the function $X_\mathbf{a}(\cdot,x_0)-X_\mathbf{b}(\cdot,x_0)$, since the pushforward by this function of the measure $\mu$ flattens it to an absolutely continuous measure. (MNL) implies in particular a more familliar \say{UNI} statement \emph{everywhere and at every scales}.

\begin{lem}
For all $N \geq 1$, there exists $\widehat{\mathbf{a}},\widehat{\mathbf{b}} \in \mathcal{A}^N$ such that for all $\textbf{c} \in \mathcal{A}^*$ and for all $y \in I_\mathbf{c} \cap F$, there exists $x \in I_\mathbf{c} \cap F$ such that
$$ |X_{\widehat{\mathbf{a}}}(x,y)-X_{\widehat{\mathbf{b}}}(x,y)| \geq \frac{1}{3 C_{\text{(MNL)}}} \mu(I_{\mathbf{c}}). $$
\end{lem}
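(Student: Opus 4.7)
The plan is to set $\widehat{\mathbf{a}}, \widehat{\mathbf{b}} \in \mathcal{A}^N$ to be the two words furnished by (MNL) at level $N$, and then extract the desired pointwise lower bound by an elementary contradiction argument. The key preparatory observation is a simple \emph{cocycle identity}: defining
\[H(x) := (S_N\tau \circ g_{\widehat{\mathbf{a}}})(x) - (S_N\tau \circ g_{\widehat{\mathbf{b}}})(x),\]
and expanding the definition of $X_\mathbf{a}$ immediately yields
\[X_{\widehat{\mathbf{a}}}(x,y) - X_{\widehat{\mathbf{b}}}(x,y) = H(x) - H(y).\]
Moreover, for any reference point $x_0 \in F_\Phi$ we have $H(x) = \big(X_{\widehat{\mathbf{a}}}(x,x_0) - X_{\widehat{\mathbf{b}}}(x,x_0)\big) + H(x_0)$, so the pushforward $H_*\mu$ is simply a translate of $(X_{\widehat{\mathbf{a}}}(\cdot,x_0) - X_{\widehat{\mathbf{b}}}(\cdot,x_0))_*\mu$. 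By translation invariance of Lebesgue measure, the (MNL) bound transfers directly: $H_*\mu(I) \leq C_{\text{(MNL)}}|I|$ for every interval $I \subset \mathbb{R}$.

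With this in hand, I will argue by contradiction. Fix $\mathbf{c} \in \mathcal{A}^*$ and $y \in I_\mathbf{c}$, and set $\eta := \tfrac{1}{3 C_{\text{(MNL)}}}\mu(I_\mathbf{c})$. Suppose no $x \in I_\mathbf{c}$ satisfies the desired lower bound, i.e.\ $|H(x) - H(y)| < \eta$ for every $x \in I_\mathbf{c}$. Then $H(F_\Phi \cap I_\mathbf{c}) \subset J$, where $J := (H(y)-\eta, H(y)+\eta)$ is an interval of length $2\eta$. Using that $\mu$ is supported on $F_\Phi$ and assigns positive mass to every cylinder, I obtain
\[\mu(I_\mathbf{c}) = \mu(F_\Phi \cap I_\mathbf{c}) \leq \mu(H^{-1}(J)) = H_*\mu(J) \leq C_{\text{(MNL)}}\cdot 2\eta = \tfrac{2}{3}\mu(I_\mathbf{c}),\]
a contradiction. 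Hence some $x \in I_\mathbf{c}$ must satisfy $|H(x) - H(y)| \geq \eta$, as required.

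There is really no hard step here: once the cocycle identity is spotted, the dependence on the arbitrary $y$ becomes a cosmetic translation of the pushforward, and the absolute-continuity bound from (MNL) does all the work. The one mild subtlety is that $y$ is allowed to lie in $I_\mathbf{c}$ rather than $F_\Phi$, but $H$ is defined on all of $I$ so $H(y)$ is unambiguous, and only the \emph{centre} of the comparison interval $J$ depends on $y$ — not the (MNL) bound applied to it.
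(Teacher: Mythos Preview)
Your proof is correct and follows essentially the same route as the paper: take the words provided by (MNL), bound the $\mu$-measure of the set where $X_{\widehat{\mathbf{a}}}(\cdot,y)-X_{\widehat{\mathbf{b}}}(\cdot,y)$ lies in the small interval by $\tfrac{2}{3}\mu(I_\mathbf{c})$, and conclude that this set cannot contain $I_\mathbf{c}$. Your cocycle/translation-invariance remark is a slightly more careful treatment of the case $y\notin F_\Phi$ than the paper, which tacitly applies (MNL) with $x_0=y$, but the argument is otherwise the same.
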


\begin{proof}
Let $N \geq 1$ and let $\mathbf{a},\mathbf{b}$ be such that (MNL) is satisfied. Fix any $y \in F$. Now consider $I := [-\frac{1}{3 C_{\text{(MNL)}}}\mu(I_\mathbf{c}),\frac{1}{3 C_{\text{(MNL)}}}\mu(I_\mathbf{c})]$. Then
$$ \mu(x \in F, \ X_\mathbf{a}(x,y)-X_\mathbf{b}(x,y) \in I ) \leq C_{\text{(MNL)}} |I| = \frac{2}{3} \mu(I_\mathbf{c}). $$
This ensure that the set $\{x \in F, \ X_\mathbf{a}(x,y)-X_\mathbf{b}(x,y) \in I \}$ can not contain $I_\mathbf{c} \cap F$.
\end{proof}

Thanks to this seperation condition, we can show that we can always find a word $\widehat{\mathbf{a}} \in \mathcal{A}^N$ such that $X_{\widehat{\mathbf{a}}}$ can avoid some given small enough intervals. \\

\begin{lem}
Let $y \in F$. Consider some function $t:F \rightarrow \mathbb{R}$. \\ Then for all $N \geq 1$, for all $\mathbf{c} \in \mathcal{A}^*$, there exists $\widehat{\mathbf{a}} \in \mathcal{A}^N$ such that:
$$ \exists x \in I_{\mathbf{c}} \cap F, \ |X_{\widehat{\mathbf{a}}}(x,y) -t(x)| \geq \frac{1}{10 C_{\text{(MNL)}}} \mu(I_\mathbf{c}).$$
\end{lem}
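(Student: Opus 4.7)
The plan is to argue by contradiction using the previous lemma, which already guarantees two words $\widehat{\mathbf{a}},\widehat{\mathbf{b}} \in \mathcal{A}^N$ with a quantitative spreading between $X_{\widehat{\mathbf{a}}}(\cdot,y)$ and $X_{\widehat{\mathbf{b}}}(\cdot,y)$ on $I_{\mathbf{c}}$. The function $t$ is arbitrary, but if both $X_{\widehat{\mathbf{a}}}(\cdot,y)$ and $X_{\widehat{\mathbf{b}}}(\cdot,y)$ stayed uniformly close to $t$ on $I_{\mathbf{c}}$, then by triangle inequality they would stay close to each other on $I_{\mathbf{c}}$, contradicting the quantitative separation produced by the previous lemma.

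More concretely, I would fix $y \in K$, $N \geq 1$ and $\mathbf{c} \in \mathcal{A}^*$, and take $\widehat{\mathbf{a}}, \widehat{\mathbf{b}} \in \mathcal{A}^N$ provided by the previous lemma. Suppose for contradiction that the claim fails \emph{both} for $\widehat{\mathbf{a}}$ and for $\widehat{\mathbf{b}}$; then for every $x \in I_{\mathbf{c}}$ one has
\[
|X_{\widehat{\mathbf{a}}}(x,y)-t(x)| < \frac{1}{10 C_{\text{(MNL)}}} \mu(I_{\mathbf{c}}), \qquad |X_{\widehat{\mathbf{b}}}(x,y)-t(x)| < \frac{1}{10 C_{\text{(MNL)}}} \mu(I_{\mathbf{c}}).
\]
Applying the triangle inequality on each such $x$ yields
\[
|X_{\widehat{\mathbf{a}}}(x,y)-X_{\widehat{\mathbf{b}}}(x,y)| < \frac{2}{10 C_{\text{(MNL)}}} \mu(I_{\mathbf{c}}) = \frac{1}{5 C_{\text{(MNL)}}} \mu(I_{\mathbf{c}}),
\]
for every $x \in I_{\mathbf{c}}$. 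But the previous lemma produced a specific $x \in I_{\mathbf{c}}$ with $|X_{\widehat{\mathbf{a}}}(x,y)-X_{\widehat{\mathbf{b}}}(x,y)| \geq \frac{1}{3 C_{\text{(MNL)}}} \mu(I_{\mathbf{c}})$, and $\frac{1}{5} < \frac{1}{3}$, contradiction. Hence the claim holds for at least one of $\widehat{\mathbf{a}}$ or $\widehat{\mathbf{b}}$, and we take that as the desired word.

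There is no real obstacle here: the whole content of the lemma is a bookkeeping of constants via the triangle inequality, and the numerical constant $\tfrac{1}{10}$ is simply chosen to leave enough room so that twice it is strictly below $\tfrac{1}{3}$. The key ingredient — the fact that the function $X_{\widehat{\mathbf{a}}}-X_{\widehat{\mathbf{b}}}$ achieves modulus at least $\tfrac{1}{3C_{\text{(MNL)}}}\mu(I_{\mathbf{c}})$ somewhere on $I_{\mathbf{c}}$ — was established in the preceding lemma using (MNL) and the fact that pushing $\mu$ by $X_{\mathbf{a}}(\cdot,y)-X_{\mathbf{b}}(\cdot,y)$ gives a measure absolutely continuous with respect to Lebesgue with bounded density.
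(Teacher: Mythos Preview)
Your proposal is correct and is essentially the same argument as the paper's: take the two words $\widehat{\mathbf{a}},\widehat{\mathbf{b}}$ from the previous lemma, use the point $x \in I_{\mathbf{c}}$ where $|X_{\widehat{\mathbf{a}}}(x,y)-X_{\widehat{\mathbf{b}}}(x,y)| \geq \tfrac{1}{3C_{\text{(MNL)}}}\mu(I_{\mathbf{c}})$, and conclude by the triangle inequality that at least one of $|X_{\widehat{\mathbf{a}}}(x,y)-t(x)|$, $|X_{\widehat{\mathbf{b}}}(x,y)-t(x)|$ is at least $\tfrac{1}{10C_{\text{(MNL)}}}\mu(I_{\mathbf{c}})$. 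The only cosmetic difference is that the paper argues directly at that single $x$ rather than phrasing it as a contradiction over all $x$.
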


\begin{proof}
Let $y \in F$, and $t:F \rightarrow \mathbb{R}$. Let $N \geq 1$ and let $\widehat{\mathbf{a}},\widehat{\mathbf{b}} \in \mathcal{A}^N$ be given by the previous Lemma. Let $\mathbf{c} \in \mathcal{A}^*$. We then know that there exists $x \in I_{\mathbf{c}}$ such that $|X_{\widehat{\mathbf{a}}}(x,y)-X_{\widehat{\mathbf{b}}}(x,y)| \geq \frac{1}{3C_{\text{(MNL)}}} \mu(I_{\mathbf{c}}).$
It follows that $|X_{\widehat{\mathbf{a}}}(x,y)-t(x)| \geq \frac{1}{10C_{\text{(MNL)}}} \mu(I_\mathbf{c})$ or $|X_{\widehat{\mathbf{b}}}(x,y)-t(x)| \geq \frac{1}{10C_{\text{(MNL)}}} \mu(I_\mathbf{c})$.
\end{proof}

\begin{lem}
Let $A \geq 1$. There exists $N := N(A) \geq 1$ large enough such that the following holds. \\
Let $t \in C^\delta(F,\mathbb{R})$ such that $\|t\|_{C^\delta(F,\mathbb{R})} \leq A$. For all $\mathbf{c} \in \mathcal{A}^*$ and $y \in I_\mathbf{c}$:
$$ \exists (\widehat{\mathbf{a}}, \widehat{\mathbf{c}}) \in (\mathcal{A}^{N})^2, \ \forall x \in I_{\mathbf{c} \widehat{\mathbf{c}}}, \ \forall \tilde{\mathbf{a}} \in \mathcal{A}^{*}, \ |X_{\tilde{\mathbf{a}} \widehat{\mathbf{a}}}(x,y)-t(x)| \geq \frac{1}{100 C_{\text{(MNL)}}} \mu(I_\mathbf{c}). $$
\end{lem}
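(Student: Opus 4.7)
The plan is to upgrade the pointwise lower bound from the preceding lemma into an interval-wide lower bound via a localization argument, and then absorb arbitrary prefixes $\tilde{\mathbf{a}}$ using the cocycle identity recalled at the start of the subsection. Given $\mathbf{c} \in \mathcal{A}^*$, $y \in I_\mathbf{c}$, and $t \in C^\delta(K,\mathbb{R})$ with $\|t\|_{C^\delta} \leq A$, I would first apply the preceding lemma (whose underlying measure-theoretic argument actually produces $x$ in $K \cap I_\mathbf{c}$) to obtain $\widehat{\mathbf{a}} \in \mathcal{A}^N$ and some $x_0 \in F_\Phi \cap I_\mathbf{c}$ with
$$ |X_{\widehat{\mathbf{a}}}(x_0, y) - t(x_0)| \geq \frac{\mu(I_\mathbf{c})}{10 C_{\text{(MNL)}}}.$$
Since $x_0 \in F_\Phi \cap I_\mathbf{c}$, its symbolic expansion extends $\mathbf{c}$, and taking the next $N$ symbols gives $\widehat{\mathbf{c}} \in \mathcal{A}^N$ with $x_0 \in I_{\mathbf{c}\widehat{\mathbf{c}}}$.

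Second, I would establish a uniform-in-$\widehat{\mathbf{a}}$ H\"older bound for $x \mapsto X_{\widehat{\mathbf{a}}}(x,y) = \log |g_{\widehat{\mathbf{a}}}'(x)| - \log |g_{\widehat{\mathbf{a}}}'(y)|$. Telescoping $\log|g_{\widehat{\mathbf{a}}}'(x)|$ via the chain rule, each summand is a $(C,\delta)$-H\"older factor composed with a contraction of rate $\kappa_+^{N-k}$, so the geometric series yields
$$ |X_{\widehat{\mathbf{a}}}(x,y) - X_{\widehat{\mathbf{a}}}(x',y)| \leq C' |x - x'|^\delta, \qquad C' := \frac{C}{1 - \kappa_+^\delta},$$
with $C'$ independent of $\widehat{\mathbf{a}}$ and $N$. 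Combining with $\|t\|_{C^\delta} \leq A$, the diameter bound $|I_{\mathbf{c}\widehat{\mathbf{c}}}| \simeq |I_\mathbf{c}||I_{\widehat{\mathbf{c}}}| \lesssim \kappa_+^N |I_\mathbf{c}|$, and $|I_\mathbf{c}|^\delta \simeq \mu(I_\mathbf{c})$, a triangle inequality gives, for every $x \in I_{\mathbf{c}\widehat{\mathbf{c}}}$,
$$ |X_{\widehat{\mathbf{a}}}(x,y) - t(x)| \geq \frac{\mu(I_\mathbf{c})}{10 C_{\text{(MNL)}}} - \tilde C(C' + A)\kappa_+^{\delta N} \mu(I_\mathbf{c}) \geq \frac{\mu(I_\mathbf{c})}{50 C_{\text{(MNL)}}},$$
as soon as $N = N(A)$ is chosen large enough to dominate $A$ and the universal constants.

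Third, to kill an arbitrary prefix $\tilde{\mathbf{a}}\in\mathcal{A}^*$, I would use the cocycle identity
$$ X_{\tilde{\mathbf{a}}\widehat{\mathbf{a}}}(x,y) = X_{\tilde{\mathbf{a}}}(g_{\widehat{\mathbf{a}}}x, g_{\widehat{\mathbf{a}}}y) + X_{\widehat{\mathbf{a}}}(x,y),$$
together with the global estimate $|X_{\tilde{\mathbf{a}}}(u,v)| \lesssim |u-v|^\delta$ uniform in $\tilde{\mathbf{a}}$ (again from bounded distortion, since $\tau \in C^\delta$). Since $g_{\widehat{\mathbf{a}}}$ contracts by $\kappa_+^N$ and $|x-y| \leq |I_\mathbf{c}|$, the remainder is
$$ |X_{\tilde{\mathbf{a}}}(g_{\widehat{\mathbf{a}}}x, g_{\widehat{\mathbf{a}}}y)| \lesssim \kappa_+^{\delta N}|x-y|^\delta \lesssim \kappa_+^{\delta N}\mu(I_\mathbf{c}),$$
which is absorbed by enlarging $N$ once more, delivering the required lower bound $\mu(I_\mathbf{c})/(100 C_{\text{(MNL)}})$.

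The only mildly delicate point is extracting the uniform-in-$N$ H\"older constants for both $X_{\widehat{\mathbf{a}}}$ (in its first argument) and $X_{\tilde{\mathbf{a}}}$ (globally) from the bounded $(C,\delta)$-distortion hypothesis; once these are secured, everything reduces to a triangle inequality and to choosing $N = N(A)$ large enough that the two error terms of size $\kappa_+^{\delta N}\mu(I_\mathbf{c})$ are dominated by the main term from (MNL).
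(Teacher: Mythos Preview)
Your proposal is correct and follows essentially the same approach as the paper: obtain $\widehat{\mathbf{a}}$ and $x_0$ from the preceding lemma, localize to $I_{\mathbf{c}\widehat{\mathbf{c}}}$ via the symbolic expansion of $x_0$, and then absorb the three error terms (variation of $t$, variation of $X_{\widehat{\mathbf{a}}}$ in $x$, and the prefix contribution via the cocycle identity) using the uniform $C^\delta$ bounds and $|I_{\mathbf{c}\widehat{\mathbf{c}}}|^\delta \simeq \kappa_+^{\delta N}\mu(I_\mathbf{c})$, choosing $N=N(A)$ large. The only cosmetic difference is that the paper groups the error terms slightly differently (it bounds $|X_{\tilde{\mathbf{a}}\widehat{\mathbf{a}}}(x,y)-X_{\tilde{\mathbf{a}}\widehat{\mathbf{a}}}(x_0,y)|=|X_{\tilde{\mathbf{a}}\widehat{\mathbf{a}}}(x_0,x)|$ directly rather than splitting off the prefix first), but the content is identical.
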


\begin{proof}
Let $A \geq 1$. Let $N \geq 1$, we will fix it large enough during the proof depending on $A$. Let $t \in C^\delta$ be such that $\|t\|_{C^\delta} \leq A$. Let $\mathbf{c} \in \mathcal{A}^*$, and let $y \in I_\mathbf{c}$. \\

We know already that there exists $\widehat{\mathbf{a}} \in \mathcal{A}^N$ and $x_0 \in I_{\mathbf{c}}$
such that $$|X_{\widehat{\mathbf{a}}}(x_0,y)-t(x_0)| \geq \frac{1}{10 C_{\text{(MNL)}}} \mu(I_\mathbf{c}).$$
To conclude, we just need to show that this bound still holds under perturbations. Let us define $\widehat{c} \in \mathcal{A}^N$ as the word for wich $x_0 \in I_{\mathbf{c} \widehat{\mathbf{c}}}$. We have three terms to study.
\begin{enumerate}
\item First of all, for all $x \in I_{\mathbf{c} \widehat{\mathbf{c}}}$, we have a bound
$$ |t(x)-t(x_0)| \leq A |I_{\mathbf{c} \widehat{\mathbf{c}}}|^\delta \lesssim A \kappa_-^{\delta N} \mu(I_\mathbf{c}) \leq \frac{1}{100 C_{\text{(MNL)}}} \mu(I_\mathbf{c}) $$
if $N$ is large enough depending on $A$.
\item Then, for any $\tilde{\mathbf{a}} \in \mathcal{A}^*$, since $g_{\widehat{\mathbf{a}}}(x_0),g_{\widehat{\mathbf{a}}}(y) \in I_{\widehat{\mathbf{a}}\mathbf{c}}$:
$$ |X_{\tilde{\mathbf{a}}\widehat{\mathbf{a}}}(x_0,y) - X_{\widehat{\mathbf{a}}}(x_0,y)| = |X_{\tilde{\mathbf{a}}}(g_{\widehat{\mathbf{a}}}x_0{},g_{\widehat{\mathbf{a}}}y)| \lesssim |I_{\mathbf{a} \mathbf{c}}|^\delta \lesssim \kappa_-^{\delta N} \mu(I_\mathbf{c}) \leq \frac{1}{100 C_{\text{(MNL)}}} \mu(I_\mathbf{c}) $$
if $N$ is large enough depending on $A$.
\item Finally, for any $\tilde{\mathbf{a}} \in \mathcal{A}^*$ and for all $x \in I_{\mathbf{c} \widehat{\mathbf{c}}}$:
$$ |X_{\tilde{\mathbf{a}}\widehat{\mathbf{a}}}(x_0,y) - X_{\tilde{\mathbf{a}}\widehat{\mathbf{a}}}(x,y)| = |X_{\tilde{\mathbf{a}}\widehat{\mathbf{a}}}(x_0,x)| \lesssim |I_{\mathbf{c} \widehat{\mathbf{c}}}|^\delta \lesssim \kappa^{\delta N} \mu(I_\mathbf{c}) \leq \frac{1}{100 C_{\text{(MNL)}}} \mu(I_\mathbf{c}) $$
if $N$ is chosen large enough depending on $A$.
\end{enumerate}
Let us now fix $N=N(A)$ so that all three bounds holds. We can then write, for any $\tilde{\mathbf{a}} \in \mathcal{A}^*$ and $x \in I_{\mathbf{c} \widehat{\mathbf{c}}}$:
$$ |X_{\tilde{\mathbf{a}} \widehat{\mathbf{a}}}(x,y)-t(x)| \geq |X_{\widehat{\mathbf{a}}}(x_0,y)-t(x_0)| $$ $$ - \ |t(x)-t(x_0)| - |X_{\tilde{\mathbf{a}}\widehat{\mathbf{a}}}(x_0,y) - X_{\widehat{\mathbf{a}}}(x_0,y)| - |X_{\tilde{\mathbf{a}}\widehat{\mathbf{a}}}(x_0,y) - X_{\tilde{\mathbf{a}}\widehat{\mathbf{a}}}(x,y)| $$
$$ \geq \frac{1}{100 C_{\text{(MNL)}}} \mu(I_\mathbf{c}). $$
which concludes the proof. \end{proof}

Before concluding, let us recall that all the maps $X_\mathbf{a}(\cdot,y)$ are $C^\delta$, and this uniformly in the choice of $\mathbf{a}$. It follows that the previous Lemma holds for $t := X_\mathbf{a}(\cdot,y)$ with a uniform $N$. \\
For the rest of the section let us define, for any $\sigma>0$:
$$ n(\sigma) := \max \{ n \geq 1, \ \forall \mathbf{c} \in \mathcal{A}^n \ | \ \mu(I_{\mathbf{c}}) \geq 100 C_{\text{(MNL)}} \sigma \}. $$
Notice that we have $\mu(I_\mathbf{c}) \simeq |I_\mathbf{c}|^\delta \geq \kappa_-^{\delta n}$, so that $n(\sigma) \geq c_0 |\ln(\sigma)|$ for some $c_0$. \\

As an immediate Corollary, we get the following Lemma.

\begin{lem} Let $A \geq 1$. There exists $N := N(A) \geq 1$ large enough such that the following holds. \\
Let $t \in C^\delta(F,\mathbb{R})$ such that $\|t\|_{C^\delta(F,\mathbb{R})} \leq A$.
Let $\sigma>0$, and denote $I(x) := [t(x)-\sigma,t(x)+\sigma]$. For all $\mathbf{c} \in \mathcal{A}^*$ with $|\mathbf{c}| \leq n(\sigma)$, and for all $y \in I_\mathbf{c}$:
$$ \exists (\widehat{\mathbf{a}}, \widehat{\mathbf{c}}) \in (\mathcal{A}^{N})^2, \ \forall x \in I_{\mathbf{c} \widehat{\mathbf{c}}}, \ \forall \tilde{\mathbf{a}} \in \mathcal{A}^{*}, \ X_{\tilde{\mathbf{a}} \widehat{\mathbf{a}}}(x,y) \notin I(x). $$
\end{lem}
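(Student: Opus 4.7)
This statement is an immediate corollary of the preceding lemma, so the strategy is essentially just translation: apply the previous lemma verbatim, then convert the bound $\mu(I_\mathbf{c})$ on the right-hand side into a bound by $\sigma$ using the definition of $n(\sigma)$.

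First I would take $N = N(A)$ to be exactly the integer supplied by the previous lemma for the given $A \geq 1$. Given a H\"older function $t$ with $\|t\|_{C^\delta(K,\R)} \leq A$, a scale $\sigma > 0$, a word $\mathbf{c} \in \mathcal{A}^*$ with $|\mathbf{c}| \leq n(\sigma)$, and $y \in I_\mathbf{c}$, I would invoke the previous lemma to produce $(\widehat{\mathbf{a}}, \widehat{\mathbf{c}}) \in (\mathcal{A}^N)^2$ satisfying
$$|X_{\tilde{\mathbf{a}}\widehat{\mathbf{a}}}(x,y) - t(x)| \geq \frac{1}{100\, C_{\text{(MNL)}}}\, \mu(I_\mathbf{c})$$
for every $x \in I_{\mathbf{c}\widehat{\mathbf{c}}}$ and every $\tilde{\mathbf{a}} \in \mathcal{A}^*$.

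The only remaining task is to lower bound $\mu(I_\mathbf{c})$ by $100\, C_{\text{(MNL)}}\, \sigma$. By the very definition of $n(\sigma)$, every word of length exactly $n(\sigma)$ has $\mu$-measure at least $100\, C_{\text{(MNL)}}\, \sigma$. Since $|\mathbf{c}| \leq n(\sigma)$, I would choose any admissible extension $\mathbf{c}^{\sharp} \in \mathcal{A}^{n(\sigma)}$ having $\mathbf{c}$ as a prefix; the nesting $I_{\mathbf{c}^{\sharp}} \subseteq I_\mathbf{c}$ then yields
$$\mu(I_\mathbf{c}) \geq \mu(I_{\mathbf{c}^{\sharp}}) \geq 100\, C_{\text{(MNL)}}\, \sigma.$$
Substituting into the previous inequality gives $|X_{\tilde{\mathbf{a}}\widehat{\mathbf{a}}}(x,y) - t(x)| \geq \sigma$, which is precisely the assertion $X_{\tilde{\mathbf{a}}\widehat{\mathbf{a}}}(x,y) \notin I(x) = [t(x)-\sigma, t(x)+\sigma]$. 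There is no genuine obstacle here: the preceding lemma already supplied the uniform quantitative separation, and the scale $n(\sigma)$ was introduced precisely so that the separation constant $\mu(I_\mathbf{c})$ automatically exceeds $\sigma$ whenever $|\mathbf{c}| \leq n(\sigma)$.
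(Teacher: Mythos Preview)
Your proof is correct and matches the paper's approach exactly: the paper presents this lemma as an ``immediate corollary'' of the preceding one, and your argument---applying that lemma and then using the definition of $n(\sigma)$ to convert the $\mu(I_\mathbf{c})$ lower bound into $\sigma$---is precisely the intended translation.
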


The idea now, to prove our main estimate, is to repetitively apply our previous Lemma.
Recall that $x_\mathbf{c}$ is some point in $I_\mathbf{c}$ and $w_\mathbf{c}(x) \simeq \mu(I_\mathbf{c})$.

\begin{Proposition}
Let $\sigma >0$ be small enough.
Let $t \in C^\delta(F,\mathbb{R})$ such that $\|t\|_{C^\delta}$. Fix $A := \|t\|_{C^\delta} + \sup_{\mathbf{a} \in \mathcal{A}^\delta} \sup_{y \in F} \|X_\mathbf{a}(\cdot,y) \|_{C^\delta}$. \\

Define $I(x) := [t(x)-\sigma,t(x)+\sigma]$.
We have, for all $x_0,y_0,z_0 \in F$ and for all $n \geq 1$:
$$ \sum_{\mathbf{a} \in \mathcal{A}^n} \sum_{\mathbf{c} \in \mathcal{A}^n} w_{\mathbf{a}}(x_0) w_{\mathbf{c}}(y_0) \mathbf{1}_{I(x_\mathbf{c})} \Big( X_\mathbf{a}(x_\mathbf{c},z_0) \Big) \leq C_0( \sigma^\gamma + \rho^{n}) $$
for some $\gamma,\rho \in (0,1), C_0 \geq 1$, depending on $A$.
\end{Proposition}

\begin{proof}
Fix $A := \|t\|_{C^\delta} + \sup_{\mathbf{a} \in \mathcal{A}^\delta} \sup_{y \in F} \|X_\mathbf{a}(\cdot,y) \|_{C^\delta}$, and consider $N:=N(A)$ given by the previous Lemma.
Define $m := \min(n,n(\sigma))/(2N)$. We wish to show $$ \sum_{\mathbf{a} \in \mathcal{A}^n} \sum_{\mathbf{c} \in \mathcal{A}^n} w_{\mathbf{a}}(x_0) w_{\mathbf{c}}(y_0) \mathbf{1}_{I(x_\mathbf{c})} \Big( X_\mathbf{a}(x_\mathbf{c},z_0) \Big) \leq \rho^m,$$
for some $\rho \in (0,1)$.
This will imply our desired bound, since then $$ \rho^m = \max (\rho^{n/(2N)}, \rho^{n(\sigma)/(2N)}) \leq \rho^{n/(2N)} + \rho^{n(\sigma)/N} \lesssim \rho^{n/(2N)} +\sigma^\gamma $$
for some $\gamma \in (0,1)$.

To prove this bound, we are going to use our previous Lemma $m$ times. Suppose that $n \geq N$ and that $\sigma$ is small enough so that $n(\sigma) \geq N$. First of all, we know that there exists $(\widehat{\mathbf{a}}^{(1)}, \widehat{\mathbf{c}}^{(1)}) \in (\mathcal{A}^{N})^2$ such that:
$$ \forall \tilde{\mathbf{a}} \in \mathcal{A}^{n-N}, \ \forall x \in I_{\widehat{\mathbf{c}}}, \ X_{\tilde{\mathbf{a}}\widehat{\mathbf{a}}}(x,z_0) \notin I(x). $$
We can then write:
$$\sum_{\mathbf{a}, \mathbf{c} \in \mathcal{A}^n} w_{\mathbf{a}}(x_0) w_{\mathbf{c}}(y_0) \mathbf{1}_{I(x_\mathbf{c})} \Big( X_\mathbf{a}(x_\mathbf{c},z_0) \Big) $$
$$ = \sum_{\widehat{\mathbf{a}}, \widehat{\mathbf{c}} \in \mathcal{A}^N} w_{\widehat{\mathbf{a}}}(x_0) w_{\widehat{\mathbf{c}}}(y_0) \sum_{\tilde{\mathbf{a}}, \tilde{\mathbf{c}} \in \mathcal{A}^{n-N}} w_{\tilde{\mathbf{a}}}( g_{\widehat{\mathbf{a}}}x_0) w_{\tilde{\mathbf{c}}}(g_{\widehat{\mathbf{c}}}y_0) \mathbf{1}_{I(x_{\widehat{\mathbf{c}} \tilde{\mathbf{c}}})} \Big( X_{\tilde{\mathbf{a}} \widehat{\mathbf{a}} } (x_{\widehat{\mathbf{c}} \tilde{\mathbf{c}}},z_0) \Big) $$
$$ = \underset{ (\widehat{\mathbf{a}}, \widehat{\mathbf{c}}) \neq (\widehat{\mathbf{a}}^{(1)}, \widehat{\mathbf{c}}^{(1)}) }{\sum_{\widehat{\mathbf{a}}, \widehat{\mathbf{c}} \in \mathcal{A}^{N}}} w_{\widehat{\mathbf{a}}}(x_0) w_{\widehat{\mathbf{c}}}(y_0) \Bigg( \sum_{\tilde{\mathbf{a}}, \tilde{\mathbf{c}} \in \mathcal{A}^{n-N}} w_{\tilde{\mathbf{a}}}( g_{\widehat{\mathbf{a}}}x_0) w_{\tilde{\mathbf{c}}}(g_{\widehat{\mathbf{c}}}y_0) \mathbf{1}_{I(x_{\widehat{\mathbf{c}} \tilde{\mathbf{c}}})} \Big( X_{\tilde{\mathbf{a}} \widehat{\mathbf{a}} } (x_{\widehat{\mathbf{c}} \tilde{\mathbf{c}}},z_0) \Big) \Bigg). $$
To iterate, we rewrite the event $ X_{\tilde{\mathbf{a}} \widehat{\mathbf{a}} } (x_{\tilde{\mathbf{c}}\widehat{\mathbf{c}}},z_0) \in I(x_{\tilde{\mathbf{c}}\widehat{\mathbf{c}}})$ in another manner. We have
$$ X_{\tilde{\mathbf{a}} \widehat{\mathbf{a}} } (x_{\widehat{\mathbf{c}} \tilde{\mathbf{c}}},z_0) - t(x_{\widehat{\mathbf{c}} \tilde{\mathbf{c}}}) = X_{\tilde{\mathbf{a}} } (g_{\widehat{\mathbf{a}}} x_{\widehat{\mathbf{c}} \tilde{\mathbf{c}}},g_{\widehat{\mathbf{a}}} z_0) + t_{\widehat{\mathbf{a}}}(x_{\widehat{\mathbf{c}} \tilde{\mathbf{c}}}). $$
for $t_{\widehat{\mathbf{a}}}(x) := t(x)
- X_{\widehat{\mathbf{a}}}(x,z_0)$. Notice that $\|t_{\widehat{\mathbf{a}}}\|_{C^\delta} \leq A$. We can then write
$$ \sum_{\tilde{\mathbf{a}}, \tilde{\mathbf{c}} \in \mathcal{A}^{n-N}} w_{\tilde{\mathbf{a}}}( g_{\widehat{\mathbf{a}}}x_0) w_{\tilde{\mathbf{c}}}(g_{\widehat{\mathbf{c}}}y_0) \mathbf{1}_{I(x_{\widehat{\mathbf{c}} \tilde{\mathbf{c}}})} \Big( X_{\tilde{\mathbf{a}} \widehat{\mathbf{a}} } (x_{\widehat{\mathbf{c}} \tilde{\mathbf{c}}},z_0) \Big) $$
$$ = \sum_{\tilde{\mathbf{a}}, \tilde{\mathbf{c}} \in \mathcal{A}^{n-N}} w_{\tilde{\mathbf{a}}}( g_{\widehat{\mathbf{a}}}x_0) w_{\tilde{\mathbf{c}}}(g_{\widehat{\mathbf{c}}}y_0) \mathbf{1}_{I_{\widehat{\mathbf{a}}}(x_{\widehat{\mathbf{c}} \tilde{\mathbf{c}}})} \Big( X_{\tilde{\mathbf{a}} } (g_{\widehat{\mathbf{a}}} x_{\widehat{\mathbf{c}} \tilde{\mathbf{c}}},g_{\widehat{\mathbf{a}}} z_0) \Big), $$
where $I_{\widehat{\mathbf{a}}}(x) := [t_{\widehat{\mathbf{a}}}(x) - \sigma,t_{\widehat{\mathbf{a}}}(x)+\sigma]$. We can then iterate the procedure. Applying the previous Lemma, we find that there exists $( \widehat{\mathbf{a}}^{(2)}, \widehat{\mathbf{c}}^{(2)} ) \in (\mathcal{A}^{N})^2$ (depending implicitely on $(\widehat{\mathbf{a}},\widehat{\mathbf{b}})$) such that:
$$ \forall \tilde{\mathbf{a}} \in \mathcal{A}^{n-2N}, \ \forall x \in I_{\widehat{\mathbf{a}}\widehat{\mathbf{c}} \widehat{\mathbf{c}}^{(2)}}, \ X_{\tilde{\mathbf{a}}\widehat{\mathbf{a}}^{(2)}}(x,g_{\widehat{\mathbf{a}}}z_0) \notin I_{\widehat{\mathbf{a}}}(x). $$
It follows that:
$$ \underset{ (\widehat{\mathbf{a}}, \widehat{\mathbf{c}}) \neq (\widehat{\mathbf{a}}^{(1)}, \widehat{\mathbf{c}}^{(1)}) }{\sum_{\widehat{\mathbf{a}}, \widehat{\mathbf{c}} \in \mathcal{A}^{N}}} w_{\widehat{\mathbf{a}}}(x_0) w_{\widehat{\mathbf{c}}}(y_0) \Bigg( \sum_{\tilde{\mathbf{a}}, \tilde{\mathbf{c}} \in \mathcal{A}^{n-N}} w_{\tilde{\mathbf{a}}}( g_{\widehat{\mathbf{a}}}x_0) w_{\tilde{\mathbf{c}}}(g_{\widehat{\mathbf{c}}}y_0) \mathbf{1}_{I_{\widehat{\mathbf{a}}}(x_{\widehat{\mathbf{c}} \tilde{\mathbf{c}}})} \Big( X_{\tilde{\mathbf{a}} } (g_{\widehat{\mathbf{a}}} x_{\widehat{\mathbf{c}} \tilde{\mathbf{c}}},g_{\widehat{\mathbf{a}}} z_0) \Bigg) $$
$$ = \underset{ (\widehat{\mathbf{a}}_1, \widehat{\mathbf{c}}_1) \neq (\widehat{\mathbf{a}}^{(1)}, \widehat{\mathbf{c}}^{(1)}) }{\sum_{\widehat{\mathbf{a}}_1, \widehat{\mathbf{c}}_1 \in \mathcal{A}^{N}}} w_{\widehat{\mathbf{a}}_1}(x_0) w_{\widehat{\mathbf{c}}_1}(y_0) \sum_{\tilde{\mathbf{a}}_2, \tilde{\mathbf{c}}_2 \in \mathcal{A}^{N}} w_{\widehat{\mathbf{a}}_2}(g_{\widehat{\mathbf{a}}_1} x_0) w_{\widehat{\mathbf{c}}_2}(g_{\widehat{\mathbf{c}}_1} y_0) \Bigg( $$ $$
\sum_{\tilde{\mathbf{a}}, \tilde{\mathbf{c}} \in \mathcal{A}^{n-2N}} w_{\tilde{\mathbf{a}}}(g_{\widehat{\mathbf{a}}_2 \widehat{\mathbf{a}}_1} x_0) w_{\tilde{\mathbf{c}}}(g_{\widehat{\mathbf{c}}_2 \widehat{\mathbf{c}}_1} y_0)
\mathbf{1}_{I_{\widehat{\mathbf{a}}}(x_{\widehat{\mathbf{c}} \tilde{\mathbf{c}}})} \Big( X_{\tilde{\mathbf{a}} } (g_{\widehat{\mathbf{a}}} x_{\widehat{\mathbf{c}} \tilde{\mathbf{c}}},g_{\widehat{\mathbf{a}}} z_0) \Big) \Bigg) $$
$$ = \underset{ (\widehat{\mathbf{a}}_1, \widehat{\mathbf{c}}_1) \neq (\widehat{\mathbf{a}}^{(1)}, \widehat{\mathbf{c}}^{(1)}) }{\sum_{\widehat{\mathbf{a}}_1, \widehat{\mathbf{c}}_1 \in \mathcal{A}^{N}}} w_{\widehat{\mathbf{a}}_1}(x_0) w_{\widehat{\mathbf{c}}_1}(y_0) \underset{(\widehat{\mathbf{a}}_2,\widehat{\mathbf{c}}_2) \neq (\widehat{\mathbf{a}}^{(2)},\widehat{\mathbf{c}}^{(2)}) }{\sum_{\tilde{\mathbf{a}}_2, \tilde{\mathbf{c}}_2 \in \mathcal{A}^{N}}} w_{\widehat{\mathbf{a}}_2}(g_{\widehat{\mathbf{a}}_1} x) w_{\widehat{\mathbf{c}}_2}(g_{\widehat{\mathbf{c}}_1} x) \Bigg( $$ $$
\sum_{\tilde{\mathbf{a}}, \tilde{\mathbf{c}} \in \mathcal{A}^{n-2N}} w_{\tilde{\mathbf{a}}}(g_{\widehat{\mathbf{a}}_2 \widehat{\mathbf{a}}_1} x_0) w_{\tilde{\mathbf{c}}}(g_{\widehat{\mathbf{c}}_2 \widehat{\mathbf{c}}_1} y_0)
\mathbf{1}_{I_{\widehat{\mathbf{a}}}(x_{\widehat{\mathbf{c}} \tilde{\mathbf{c}}})} \Big( X_{\tilde{\mathbf{a}} } (g_{\widehat{\mathbf{a}}} x_{\widehat{\mathbf{c}} \tilde{\mathbf{c}}},g_{\widehat{\mathbf{a}}} z_0) \Big) \Bigg).$$
The previous argument is then iterated $m$ times. After $m$ iterations, we get a nested sum that we can bound by:
$$ \underset{ (\widehat{\mathbf{a}}_1, \widehat{\mathbf{c}}_1) \neq (\widehat{\mathbf{a}}^{(1)}, \widehat{\mathbf{c}}^{(1)}) }{\sum_{\widehat{\mathbf{a}}_1, \widehat{\mathbf{c}}_1 \in \mathcal{A}^{N}}} w_{\widehat{\mathbf{a}}_1}(x_0) w_{\widehat{\mathbf{c}}_1}(y_0) \underset{ (\widehat{\mathbf{a}}_2, \widehat{\mathbf{c}}_2) \neq (\widehat{\mathbf{a}}^{(2)}, \widehat{\mathbf{c}}^{(2)}) }{\sum_{\tilde{\mathbf{a}}_2, \tilde{\mathbf{c}}_2 \in \mathcal{A}^{N}}} w_{\widehat{\mathbf{a}}_2}(g_{\widehat{\mathbf{a}}_1} x_0) w_{\widehat{\mathbf{c}}_2}(g_{\widehat{\mathbf{c}}_1} y_0) \dots $$ $$ \dots \underset{ (\widehat{\mathbf{a}}_m, \widehat{\mathbf{c}}_m) \neq (\widehat{\mathbf{a}}^{(m)}, \widehat{\mathbf{c}}^{(m)}) }{\sum_{\widehat{\mathbf{a}}_m, \widehat{\mathbf{c}}_m \in \mathcal{A}^{N}}} w_{\widehat{\mathbf{a}}_m}(g_{\widehat{\mathbf{a}}_1 \dots \widehat{\mathbf{a}}_{m-1}}x_0) w_{\widehat{\mathbf{c}}_m}(g_{\widehat{\mathbf{c}}_1 \dots \widehat{\mathbf{c}}_{m-1}}y_0). $$
Now, denoting $\rho := 1 - \inf_{\mathbf{a} \in \mathcal{A}^N} \inf_x w_\mathbf{a}(x) w_\mathbf{c}(x)$, we have $$ \underset{ (\widehat{\mathbf{a}}, \widehat{\mathbf{c}}) \neq (\widehat{\mathbf{a}}^{(1)}, \widehat{\mathbf{c}}^{(1)}) }{\sum_{\widehat{\mathbf{a}}, \widehat{\mathbf{c}} \in \mathcal{A}^{N}}} w_{\widehat{\mathbf{a}}}(x_0) w_{\widehat{\mathbf{c}}}(y_0) = {\sum_{\widehat{\mathbf{a}}, \widehat{\mathbf{c}} \in \mathcal{A}^{N}}} w_{\widehat{\mathbf{a}}}(x_0) w_{\widehat{\mathbf{c}}}(y_0) - w_{\widehat{\mathbf{a}}^{(1)}}(x_0) w_{\widehat{\mathbf{c}}^{(1)}}(y_0) $$ $$= (\mathcal{L}_\varphi^N)(1)^2 - w_{\widehat{\mathbf{a}}^{(1)}}(x_0) w_{\widehat{\mathbf{c}}^{(1)}}(y_0) = 1 - w_{\widehat{\mathbf{a}}^{(1)}}(x_0) w_{\widehat{\mathbf{c}}^{(1)}}(y_0) \leq \rho. $$
Using this bound inductively from the more nested sum to the first one then yields our desired bound:
$$ \sum_{\mathbf{a}, \mathbf{c} \in \mathcal{A}^n} w_{\mathbf{a}}(x_0) w_{\mathbf{c}}(y_0) \mathbf{1}_{I(x_\mathbf{c})} \Big( X_\mathbf{a}(x_\mathbf{c},z_0) \Big) \leq \rho^m. $$
\end{proof}

We are now ready to prove Theorem \ref{thm:QNL}.

\begin{proof}
Suppose (MNL). Then Proposition 6.1 applies. Thus, for any interval $I \subset \mathbb{R}$ of small enough diameter and for all $n$ large enough:
$$ \sum_{\mathbf{a,b,c,d}} w_{\a} w_{\b} w_{\c} w_{\mathbf{d}} \chi_{[-\sigma,\sigma]}( S_{n} \tau \circ g_{\mathbf{a}}(x_\mathbf{c}) -S_{n} \tau \circ g_{\mathbf{a}}(x_\mathbf{d}) - S_{n} \tau \circ g_{\mathbf{b}}(x_\mathbf{c}) + S_{n} \tau \circ g_{\mathbf{b}}(x_\mathbf{d}) ) $$
$$= \sum_{\mathbf{b,d}} w_{\b} w_{\mathbf{d}} \sum_{\a,\c} w_\a w_\c \mathbf{1}_{[ X_{\b}(x_\mathbf{c},x_\mathbf{d}) -\sigma,X_{\b}(x_\mathbf{c},x_\mathbf{d})+\sigma]}( X_{\a}(x_\mathbf{c},x_\mathbf{d}) ) $$
$$ \lesssim \sum_{\mathbf{b,d}} w_{\b} w_{\mathbf{d}} \Big( \sigma^\gamma + \rho^n \Big) \lesssim \sigma^\gamma + \rho^n, $$
where we used the uniform $\delta$-Hölder regularity of the maps $X_{\mathbf{b}}(\cdot,x_\mathbf{d})$ to get a uniform bound from Proposition 6.1. This is the (QNL) bound.
\end{proof}

\subsection{$C^2$ IFSs: Using UNI to prove the non-concentration}\label{sec:HolderDer}
The goal of this subsection is to prove UNI part of \ref{thm:QNL}, that is, we will show that (QNL) holds under (UNI). \\

To this aim, we will state and prove a countable $C^{1+\alpha}$ branch version of the Tree Lemma presented in \cite[Lemma 2.5.7]{leclerc2024nonlinearityfractalsfourierdecay} in the $C^2$ finite branch setting. This type of \say{tree Lemma} is useful as it allows us to avoid the use of spectral gap argument while still adapting some parts of a Dolgopyat style approach. The main difficulty of the countable compared to \cite[Lemma 2.5.7]{leclerc2024nonlinearityfractalsfourierdecay} is the possible blow-ups of derivatives caused by the parabolicity or infinite number of branches, which prevents us to use same parameters as in the finite branch case. While we cannot work with the derivatives $\partial_t \log \ab{g_\a'(t)}_{|t = x}$ directly, it turns out that the simplicity of the tree Lemma allows us to conclude the necessary conclusion with the following \textit{$\alpha$-H\"older difference quotients} of $\log \ab{g_\a'}$:
 \begin{align*}
 \la_\a^{\al}\su{x,y} &\equiv \frac{X_\a(x,y)}{|x-y|^\alpha}= \frac{\log\su{\ab{g_\a'\su{x}}} - \log\su{\ab{g_\a'\su{y}}}}{\ab{x - y}^\al},
 \end{align*}
 where $\a\in\Sigma^N$ for $N\in\N$ and $x \neq y$ with $\a \to x$ and $\a \to y$.

Recall that in our setting $\Phi$ satisfies the
\begin{itemize}
 \item[(1)]\textit{uniform $\kappa_+$-contraction}:
 \begin{align*}
 \fa ab\in \Sigma^2, x \in I_b: 0 < \kappa_-(ab) \leq |g_{ab}'(x)| \leq \kappa_+ < 1.
 \end{align*}

 \item[(2)] \textit{bounded $(C,\alpha)$-distortions}:
 \begin{align*}
 \mar{HA}\su{\Phi} &:= \underset{x,y \in I_{b
 }}{\sup_{ab \in \Sigma^2 }}
 \ab{\la_{ab}^\al\su{x,y}} < \infty.
 \end{align*}
\item[(3)] \textit{Uniformly $(c_0,\alpha)$-Non-Integrability} (UNI): there exists infinitely many $N\in\N$ such that for all $c \in \mathcal{A}$, for any two $\a, \b\in\Sigma^N$ with $b(\mathbf{a})=b(\mathbf{b})=c$, and for all $x,y \in I_{c}$ such that $x \neq y$, the following inequality holds:
 \begin{align*}
 \ab{\la_\a^\al\su{x,y} - \la_\b^\al\su{x,y}}\geq c_0.
 \end{align*}
\end{itemize}
Moreover, we chose the potential $\phi$ such that we have the following

\begin{itemize}
 \item[(4)] \textit{light $(C_{\phi,\gamma_0},\gamma_0)$-tail} some $\ga > 0$ we have $C_{\phi,\ga_0} := \sum_{ab\in \Sigma^2}\sup_{x\in I_{b}}w_a\su{x}\ab{g_a'\su{x}}^{-\gamma_0} < \infty$.
\end{itemize}

Our aim is to prove the following non-concentration property on a Cantor set defined by the $\lambda_\mathbf{a}^\alpha(x,y)$ when $\mathbf{a}$ varies:

\begin{prop}[Non-concentration of $\lambda_\mathbf{a}(x,y)$]
 \label{lms:HUNI_HAdler_tree_lemma_countable_IFS_fractal_domain}
 Let $c_0 > 0$ be the constant in the lower bound of the UNI condition, and $N\geq 0$ be sufficiently large such that it satisfies the UNI condition for $c_0$ and also that $A\kappa_+^{\al N}\leq \frac{c_0}{4}$, where $A\equiv \mar{HA}\su{\Phi} / \su{1 - \kappa_+^\al}$. Let $b \in \mathcal{A}$.

 For $n\geq 1,x\in I_b, y,z\in F\cap I_b:y\neq z,t:F^2\to \R,\si > 0$ and the parameter $\al > 0$, define the loss function $L_{x,y,z,t,\si,\al}\su{n}$
 \begin{align*}
 L_{x,y,z,t,\sigma,\al}\su{n} &\equiv \underset{b(\a)=b}{ \sum_{\a\in\Sigma^n} }w_\a\su{x}1_{\kk{t\su{x, y} - \si, t\su{x, y} + \si}}\su{\la_{\a}^\al\su{y,z}}.
 \end{align*}

 Then: $\ex \ga, \alpha_N, c_0 > 0, \kappa\in\su{0,1}:\fa t\in \R,\si > 0,n\geq 1, x\in I_b,y \neq z, y,z \in I_{b} \cap F$,
 \begin{align*}
 L_{x,y,z,t,\sigma,\al}\su{n}\leq \su{\frac{4\si}{c_0}}^{\ga} + \exp\su{-\alpha_N \su{n-N}}.
 \end{align*}
 Explicitly,

\begin{align*}
    \gamma := \min\Big\{\gamma_0,\frac{\gamma_0}{2\alpha},\frac{1}{\alpha \lambda N},\frac{\alpha \lambda N}{\eps_{\gamma_0} N\alpha^2},\frac{1}{2(\lambda N +C)}, \frac{\eps_{\mathrm{UNI}}}{4 \alpha C_{\ga',\alpha} e^{2\al\ga_0 C}C_\phi}\Big\},
\end{align*}
 where $\gamma' = \min\{\frac{\gamma_0}{2\alpha},\frac{t_{\gamma_0}}{\alpha}\}$ and $C_{\alpha,\gamma'} = -W(-\alpha \gamma')/(\alpha \gamma') > 0$ for the $W$ is the Lambert W-function and where $(C,\alpha)$ are the constants of bounded distortions, $\lambda$ the Lyapunov exponent, $C' >0 $ is a universal constant, $C_\phi > 0, 0 < t_{\gamma_0} < \gamma_0$ from Proposition \ref{prop:moments} under light $(C_{\phi,\gamma_0},\gamma_0)$ tail, and
 \begin{align*}
 \eps_{\mathrm{UNI}} := \min\se{\inf_{x \in I_{b}}w_{\a_1}\su{x}, \inf_{x\in I_{b}}w_{\b_1}\su{x}} > 0,
 \end{align*}
 for words $\a_1,\b_1 \in \Sigma^N$ satisfying the $(c_0,\alpha)$-UNI assumption of the IFS $\Phi$. Lastly, define the exponent $\al_N$ with these words $\a_1,\b_1$:
 \begin{align*}
 \al_N := \min\Big\{ \inf_{x\in [0,1]}-\frac{1}{N}\ln\Big(\sum_{\a\in\Sigma^{N}:\a\neq \c \atop \a\to x}w_{\a}\su{x}\Big) \mid \c \in \{\a_1,\b_1\} \Big\}.
 \end{align*}
\end{prop}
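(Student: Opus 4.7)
The plan is a tree-lemma argument obtained by iteratively peeling off the last $N$ letters of $\mathbf{a}$, in the spirit of \cite[Lemma 2.5.7]{leclerc2024nonlinearityfractalsfourierdecay} but adapted to the infinite-branch setting. Assume the nontrivial case $\sigma < c_0/4$ and $n > N$; otherwise either $(4\sigma/c_0)^{\gamma} \geq 1$ or $\exp(-\alpha_N(n-N)) \geq 1$, which already bounds $L_{x,y,z,t,\sigma,\alpha}(n) \leq 1$. Write $\mathbf{a} = \tilde{\mathbf{a}} * \mathbf{a}_0$ with $|\mathbf{a}_0| = N$ and $b(\mathbf{a}_0) = b$. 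The chain rule $g_{\mathbf{a}} = g_{\tilde{\mathbf{a}}} \circ g_{\mathbf{a}_0}$ together with bounded distortion yields the decomposition
\begin{equation*}
\lambda_{\mathbf{a}}^{\alpha}(y,z) = \lambda_{\mathbf{a}_0}^{\alpha}(y,z) + r^{\alpha}\,\lambda_{\tilde{\mathbf{a}}}^{\alpha}\bigl(g_{\mathbf{a}_0}(y),\, g_{\mathbf{a}_0}(z)\bigr), \qquad r := \frac{|g_{\mathbf{a}_0}(y) - g_{\mathbf{a}_0}(z)|}{|y-z|} \leq \kappa_+^{N},
\end{equation*}
whose second term has absolute value $\leq A\kappa_+^{\alpha N} \leq c_0/4$ by the hypothesis on $N$. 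Applying (UNI) to the inner letter $c = b$ produces $\mathbf{a}_1, \mathbf{b}_1 \in \Sigma^N$ with $|\lambda_{\mathbf{a}_1}^{\alpha}(y,z) - \lambda_{\mathbf{b}_1}^{\alpha}(y,z)| \geq c_0$; one of them, call it $\mathbf{c}$, lies at distance $\geq c_0/2$ from $t$, and then the decomposition forces $|\lambda_{\tilde{\mathbf{a}}*\mathbf{c}}^{\alpha}(y,z) - t| \geq c_0/4 > \sigma$ for \emph{every} $\tilde{\mathbf{a}}$. Factoring $w_{\tilde{\mathbf{a}}*\mathbf{a}_0}(x) = w_{\tilde{\mathbf{a}}}(g_{\mathbf{a}_0}(x))\, w_{\mathbf{a}_0}(x)$ and reparametrising the indicator then produces the key recursion
\begin{equation*}
L_{x,y,z,t,\sigma,\alpha}(n) \;\leq\; \sum_{\substack{\mathbf{a}_0 \in \Sigma^N,\; b(\mathbf{a}_0) = b \\ \mathbf{a}_0 \neq \mathbf{c}}}\! w_{\mathbf{a}_0}(x)\, L_{g_{\mathbf{a}_0}(x),\, g_{\mathbf{a}_0}(y),\, g_{\mathbf{a}_0}(z),\, t^{(1)},\, \sigma/r^{\alpha},\, \alpha}(n-N),
\end{equation*}
with an appropriately shifted $t^{(1)}$ and new inner letter $a(\mathbf{a}_0)$.

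Iterating the recursion $j$ times gives a sum over paths $\pi = (\mathbf{a}_0^{(1)},\ldots,\mathbf{a}_0^{(j)}) \in (\Sigma^N)^{j}$ avoiding the word removed at each step. Each removal contributes weight $\geq \eps_{\mathrm{UNI}}$, so the total $w$-weight on admissible paths of length $j$ is at most $(1-\eps_{\mathrm{UNI}})^{j}$. Applying (UNI) at step $i$ is legitimate only while the running scale $\sigma^{(i-1)} = \sigma/d^{(i-1)}(\pi)^{\alpha}$ stays $< c_0/4$, where $d^{(i-1)}(\pi)$ is the accumulated contraction from the first $i-1$ peeled suffixes. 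Split paths into \emph{typical} ones, where each factor $r_i \geq \theta := e^{-(\lambda + \eps_{\gamma_0}\gamma')N}$ for a small $\gamma' > 0$ to be optimised, and \emph{atypical} ones violating this bound at some step. On typical paths $d^{(i)}(\pi) \geq \theta^{i}$, so the iteration may be continued as long as $i \leq j_\star := \lfloor \log(c_0/(4\sigma))/(\alpha \log(1/\theta))\rfloor$; taking $j = \min(\lfloor (n-N)/N\rfloor, j_\star)$ then gives $(1-\eps_{\mathrm{UNI}})^{j} \leq (4\sigma/c_0)^{\gamma} + \exp(-\alpha_N(n-N))$ for the $\gamma$ and $\alpha_N$ of the statement. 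Atypical paths are handled by Markov's inequality applied to Proposition \ref{prop:moments} at exponent $-\gamma'$: the $w$-mass of $\mathbf{a}_0 \in \Sigma^N$ with $|g_{\mathbf{a}_0}'| \leq \theta$ is bounded by $\theta^{\gamma'} \cdot 2C_\phi \exp\bigl(N\gamma'(\lambda + \eps_{\gamma_0}\gamma')\bigr)$, and a union bound over the $\lesssim j_\star$ bad steps contributes a term of the same order as the main one.

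The principal difficulty, absent from the finite-branch $C^2$ setting of \cite{leclerc2024nonlinearityfractalsfourierdecay}, is that $\kappa_-$ is not bounded below: a single peeling can enlarge the running scale $\sigma^{(i)}$ by an arbitrary factor $r^{-\alpha}$. This is what forces the typical/atypical split and makes the light-tail moment bound of Proposition \ref{prop:moments} indispensable; the precise interplay between $\gamma'$, the Lyapunov exponent $\lambda$, the light-tail constants $\gamma_0$ and $\eps_{\gamma_0}$, and the UNI weight $\eps_{\mathrm{UNI}}$ is exactly what produces the intricate explicit form of $\gamma$ stated in the proposition.
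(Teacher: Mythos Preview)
Your recursion via peeling off the last $N$ letters and the UNI-based removal of one branch is exactly the paper's mechanism, and your first paragraph is essentially Lemma~\ref{lms:HUNI_concentration_fractal_domain}. The divergence, and the gap, is in how you control the growth of the running scale $\sigma^{(i)} = \sigma / d^{(i)}(\pi)^\alpha$.

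The typical/atypical split does not close. Along an atypical path you stop and bound $L \leq 1$, so the contribution of all such paths is simply their total $w$-weight. With your choice $\theta = e^{-(\lambda + \eps_{\gamma_0}\gamma')N}$ the Markov bound $\theta^{\gamma'} \cdot 2C_\phi\, e^{N\gamma'(\lambda + \eps_{\gamma_0}\gamma')}$ collapses to $2C_\phi$; even with a smaller $\theta$ the atypical mass at any single step is a fixed constant $\delta(N,\gamma')$, and the union bound over steps yields at best $\delta/\eps_{\mathrm{UNI}}$. This is independent of both $\sigma$ and $n$ and therefore cannot be absorbed into $(4\sigma/c_0)^\gamma + e^{-\alpha_N(n-N)}$.

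The paper avoids path-tracking by running a direct induction on $n$ in steps of $N$, with the hypothesis $L(m) \leq (4\sigma/c_0)^\gamma + e^{-\alpha_N(m-N)}$ assumed for \emph{all} $\sigma > 0$ simultaneously (so the case $\sigma \geq c_0/4$ is already covered trivially). Substituting this into your recursion and replacing $r^{-\alpha}$ by $e^{\alpha C}|g_{\mathbf{a}_0}'(x)|^{-\alpha}$ via bounded distortion yields
\[
L(n) \;\leq\; (4\sigma/c_0)^\gamma \underbrace{\sum_{\mathbf{a}_0 \neq \mathbf{c}} e^{\alpha\gamma C}\, w_{\mathbf{a}_0}(x)\,|g_{\mathbf{a}_0}'(x)|^{-\alpha\gamma}}_{=: \,\mathcal{R}(\gamma)} \;+\; e^{-\alpha_N(n-2N)} \sum_{\mathbf{a}_0 \neq \mathbf{c}} w_{\mathbf{a}_0}(x).
\]
The second sum is $\leq e^{-\alpha_N N}$ by the definition of $\alpha_N$, so the entire argument reduces to the single analytic inequality $\mathcal{R}(\gamma) \leq 1$. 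The paper checks this by noting $\mathcal{R}(0) = 1 - w_{\mathbf{c}}(x) \leq 1 - \eps_{\mathrm{UNI}}$ and bounding $\mathcal{R}'$ on $[0,\gamma]$ via Proposition~\ref{prop:moments} together with the estimate $-\log|g_{\mathbf{a}_0}'| \leq C_{\alpha,\gamma'}|g_{\mathbf{a}_0}'|^{-\alpha\gamma'}$ (this is where the Lambert $W$ enters); the elaborate explicit formula for $\gamma$ is precisely what forces $\gamma \cdot \sup_{[0,\gamma]} \mathcal{R}' \leq \eps_{\mathrm{UNI}}$. This soft use of the light tail --- absorbing the rescaling factor $|g_{\mathbf{a}_0}'|^{-\alpha\gamma}$ into a weighted sum that stays $\leq 1$ after removing the UNI branch --- is the missing idea.
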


To prove Proposition \ref{lms:HUNI_HAdler_tree_lemma_countable_IFS_fractal_domain}, we need first the following important non-concentration on the fractal $F$ where UNI is used. Before we can state that result, let us first review the exact $\su{c_0, \alpha}$-UNI condition that we are interested in, and then introduce two useful definitions: The aforementioned $\su{c_0,\alpha}$-UNI condition was:
\textit{
\begin{itemize}
 \item[(UNI)] \textit{$(c_0,\alpha)$-Uniform Non-Integrability}: $\tau\in C^{1+\alpha}$ and there exists $c_0>0$ such that for infinitely many $N\in\N$ and every $c\in\mathcal{A}$ there are $\a,\b\in\Sigma^N$ with $b(\a)=b(\b)=c$ and, for all $x\ne y\in F\cap I_c$,
 \[
 |X_\a(x,y)-X_\b(x,y)|\ \ge\ c_0\, |x-y|^\alpha,
 \]
\end{itemize}
where
\[
X_\mathbf{a}(x,y):=S_n\tau\circ g_\mathbf{a}(x)-S_n\tau\circ g_\mathbf{a}(y)
=\log g_\a'(x)-\log g_\a'(y), \qquad \mathbf{a}\to x,y.
\]}

Next, let $c\in \cA, N\in \N, \a,\b\in \Sigma^N$ and $c_0 > 0, \alpha > 0$ be such that $b\su{\a} = b\su{\b} = c$ and $\forall x\neq y\in F\cap I_c: \ab{X_{\a}\su{x, y} - X_\b\su{x, y}}\geq c_0\ab{x - y}^{\alpha}$.

\textbf{Define} such a pair $\su{\a, \b}$ as \emph{$\su{c_0,\alpha}$-UNI pair of length $N$ at leaf $c$}. Next, given $N\in \N, c\in \cA$, define the set
\begin{align*}
    \mar{UP}\su{N, c} &\equiv \se{\su{\a, \b}\mid \text{$(\a, \b)$ is $\su{c_0,\alpha}$-UNI pair of length $N$ at leaf $c$}}.
\end{align*}



\begin{lem}[Application of UNI]
 \label{lms:HUNI_concentration_fractal_domain}
 Assume the hypotheses of Proposition \ref{lms:HUNI_HAdler_tree_lemma_countable_IFS_fractal_domain}, suppose $0 < p < \frac{c_0}{4}$ is a fixed parameter, let $t:F^2\to \R$ and assume that $\su{N_k}_{k\in\N}\subset \N$ is a sequence of integers satisfying the aforementioned $\su{c_0,\alpha}$-Uniform Non-Integrability condition. Let $k\in\N$ be fixed, but arbitrary. Then,

\begin{align*}
&\forall k\in \N:\forall c\in \cA:\exists \mathfrak{p}\in \mar{UP}\su{N_k, c}:\forall x,y\in F\cap I_c:x\neq y \im\\
&\bigg(\exists \c\su{x,y}\in \mathfrak{p}:\forall n\geq N_k:\forall \b\in \Sigma^{n-N_k}:\kk{\su{b\su{\b} = c}\land \su{\b'\c\su{x,y}\in \Sigma^{n}}}\implies\\
&\lambda_{\b'\c\su{x,y}}^\alpha\su{x,y}\not\in \kk{t\su{x, y} - p, t\su{x, y} + p}\bigg)
 \end{align*}
 
\end{lem}

To prove Lemma \ref{lms:HUNI_concentration_fractal_domain}, we first need the following Lemma that follows from the bounded distortions and uniform contraction:

\begin{lem}
 \label{lms:Hölder_Tree_Map_Word_Bound_fractal_domain}
 We have
 \begin{align*}
 \fa N\in\N: \fa \a\in \Sigma^N:\sup_{x\neq y\in I_{b(\a)}}\ab{\la_\a^\al\su{x,y}}\leq \frac{\mar{HA}\su{\Phi}\su{1 - \kappa_+^{\al N}}}{1 - \kappa_+^\al}.
 \end{align*}
\end{lem}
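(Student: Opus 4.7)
The plan is to apply the chain rule to $\log|g_\mathbf{a}'|$ to reduce the composite log-derivative estimate to $N-1$ applications of the single-branch bounded distortion bound, then sum a geometric series. Writing $\mathbf{a} = a_0 a_1 \ldots a_{N-1} \in \Sigma^N$ (so that $g_\mathbf{a} = g_{a_0 a_1} \circ g_{a_1 a_2} \circ \cdots \circ g_{a_{N-2} a_{N-1}}$ is a composition of $N-1$ inverse branches), the chain rule yields
\[
\log|g_\mathbf{a}'(x)| = \sum_{i=0}^{N-2} \log|g_{a_i a_{i+1}}'(u_i(x))|,
\]
where $u_i := g_{a_{i+1} a_{i+2}} \circ \cdots \circ g_{a_{N-2} a_{N-1}}$ maps $I_{a_{N-1}}$ into $I_{a_{i+1}}$, with the convention $u_{N-2} = \mathrm{id}$ on $I_{a_{N-1}}$. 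Subtracting the analogous expression at $y$ gives a sum of $N-1$ differences of the form $\log|g_{a_i a_{i+1}}'(u_i(x))| - \log|g_{a_i a_{i+1}}'(u_i(y))|$, to which hypothesis (2) of bounded $(\mathrm{HA}(\Phi),\alpha)$-distortion applies directly, since $u_i(x)$ and $u_i(y)$ both lie in the common domain $I_{a_{i+1}}$ by the Markov property.

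Each individual term is then bounded by $\mathrm{HA}(\Phi)\cdot |u_i(x) - u_i(y)|^\alpha$. Iterating the uniform contraction $|g_{ab}'| \leq \kappa_+$ over the $N-2-i$ branches making up $u_i$ shows that $u_i$ is $\kappa_+^{N-2-i}$-Lipschitz, so
\[
|u_i(x) - u_i(y)|^\alpha \leq \kappa_+^{\alpha(N-2-i)} |x-y|^\alpha.
\]
Summing over $i = 0, 1, \ldots, N-2$ and reindexing via $j = N-2-i$ produces the geometric series
\[
\sum_{j=0}^{N-2} \kappa_+^{\alpha j} \;\leq\; \sum_{j=0}^{N-1} \kappa_+^{\alpha j} \;=\; \frac{1-\kappa_+^{\alpha N}}{1-\kappa_+^\alpha}.
\]
Dividing by $|x-y|^\alpha$ and taking the supremum over admissible $(x,y) \in I_{b(\mathbf{a})}$ yields the claim.

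This is a routine telescoping estimate, standard in the passage from single-branch to composite bounded-distortion bounds in $C^{1+\alpha}$ dynamics, and there is no serious obstacle. The only points needing care are the off-by-one between word length and number of branches in the composition, and the verification that each intermediate argument $u_i(x), u_i(y)$ belongs to the single cylinder $I_{a_{i+1}}$ so that the single-branch Hölder distortion hypothesis applies; both are immediate from the Markov property of $T$.
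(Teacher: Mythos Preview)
Your proof is correct and takes essentially the same approach as the paper: both expand $\log|g_\mathbf{a}'|$ via the chain rule into single-branch pieces, apply the bounded $(\mathrm{HA}(\Phi),\alpha)$-distortion hypothesis to each, control the intermediate arguments by the $\kappa_+$-contraction, and sum the resulting geometric series. The paper packages this as an induction on $N$ while you unfold the chain rule in one shot; the computations are the same.
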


\begin{proof}
 The proof is done by induction on the variable $N$. The case $N = 1$ is covered by hypothesis. Assume that for $1\leq N\leq k$ we have
 \begin{align*}
 \fa N\in\N: \fa \a\in \Sigma^N:\sup_{x \neq y \in I_{b(\mathbf{a})}}\ab{\la_\a^\al\su{x,y}}\leq \mar{HA}\su{\Phi}\sum_{j=0}^{N-1}\kappa_+^{\al j}.
 \end{align*}

 Next, for any such $\c = a\b\in \Sigma^{k+1}$ with $a\in\cA$ and $\b\in\Sigma^{k}$ and $x \neq y \in I_{b(\mathbf{a})}$, the chain rule implies that
 \begin{align*}
 \la_{\c}^\al\su{x,y} &= \la_{\b}^\al\su{x,y} + \frac{\log\su{\ab{g_a'\su{g_\b\su{x}}}} - \log\su{\ab{g_a'\su{g_\b\su{y}}}}}{\ab{x - y}^\al},
 \end{align*}
 where
 \begin{align}
 \label{local_induction_step_proved_fractal_domain}
 \ab{\frac{\log\su{\ab{g_a'\su{g_\b\su{x}}}} - \log\su{\ab{g_a'\su{g_\b\su{y}}}}}{\ab{x - y}^\al}}&\leq \frac{\mar{HA}\su{\Phi}\ab{g_\b\su{x} - g_\b\su{y}}^\al}{\ab{x - y}^\al}\leq \mar{HA}\su{\Phi}\kappa_+^{\al k},
 \end{align}
 by the mean value theorem. Hence, combining the induction hypothesis with \eqref{local_induction_step_proved_fractal_domain} and taking supremum w.r.t. $x,y$ and $\c\in\Sigma^{k+1}$ shows that
 \begin{align*}
 \fa \c\in\Sigma^{k+1}:\sup_{ x \neq y \in I_{b(\mathbf{c})} }\ab{ \la_{\c}^\al\su{x,y}}&\leq \mar{HA}\su{\Phi}\sum_{j=0}^{k}\kappa_+^{\al j} = \frac{\mar{HA}\su{\Phi}\su{1 - \kappa_+^{\al \su{k+1}}}}{1 - \kappa_+^\al}.
 \end{align*}
\end{proof}

We can now prove Lemma \ref{lms:HUNI_concentration_fractal_domain}:

\begin{proof}[Proof of Lemma \ref{lms:HUNI_concentration_fractal_domain}]
 Let $c\in \mathcal{A}$. By hypothesis we know that there exists two words $\a,\b\in\Sigma^N$ such that $b(\mathbf{a}) = b(\mathbf{b}) = c$ and
 \begin{align}
 \label{initial_ineq_fractal_domain}
 \fa x \neq y \in F\cap I_c :\ab{\la_{\a}^\al\su{x,y} - \la_{\b}^\al\su{x,y}}\geq c_0.
 \end{align}
Let $x, y\in F\cap I_c$ be fixed and $t:F^2\to \R$ a map. One of the words, call it $\c\su{x, y}$, satisfies
 \begin{align*}
 \ab{\la_{\c\su{x, y}}^\al\su{x,y} - t\su{x, y}}\geq \frac{c_0}{2},
 \end{align*}
 at $\su{x, y}$. Otherwise we would arrive at a contraction in \eqref{initial_ineq_fractal_domain} after adding and subtracting $t\su{x, y}$ and using the triangle inequality. For the rest of the proof this pair $\su{x, y}$ is fixed and so we write $\c := \c\su{x,y}$.
 
 We are ultimately looking to show that the inequality
 \begin{align*}
    \ab{\la_{\b'\c}^\al\su{x,y} - t\su{x,y}}\geq \frac{c_0}{4}.
 \end{align*}
 holds for any prefix $\b\in\Sigma^{n-N}$ at $\su{x, y}$. Let $\b\in\Sigma^{n-N}$ be fixed, but arbitrary. By the chain rule we obtain that
 \begin{align*}
 \la_{\b'\c}^\al\su{x,y} &= \la_{\c}^\al\su{x,y} + \frac{\log\su{\ab{g_\b'\su{g_\c\su{x}}}} - \log\su{\ab{g_\b'\su{g_\c\su{y}}}}}{\ab{x - y}^\al},
 \end{align*}
 and by applying exactly the same kind of argument as in the proof of Lemma \ref{lms:Hölder_Tree_Map_Word_Bound_fractal_domain}, we get
 \begin{align*}
 \ab{\frac{\log\su{\ab{g_{\b}'\su{g_{\c}\su{x}}}} - \log\su{\ab{g_{\b}'\su{g_{\c}\su{y}}}}}{\ab{x - y}^\al}}&\leq \frac{A\ab{g_\c\su{x} - g_\c\su{y}}^\al}{\ab{x - y}^\al} = A\ab{g_\c'\su{z}}^\al\leq A\kappa_+^{N\al}\leq \frac{c_0}{4},
 \end{align*}
 for some $z\in\su{x,y}\subset I_c$. Consequently, we obtain:
 \begin{align*}
 \ab{\la_{\b'\c}^\al\su{x,y} - t\su{x,y}} &\geq \ab{\la_{\c}^\al\su{x,y} - t\su{x,y}} - \ab{\frac{\log\su{g_{\b}'\su{g_{\c}\su{x}}} - \log\su{g_{\b}'\su{g_{\c}\su{y}}}}{\su{x - y}^\al}}\\
 \ab{\la_{\b'\c}^\al\su{x,y} - t\su{x,y}} &\geq \frac{c_0}{2} - A\kappa_+^{N\al}\geq \frac{c_0}{4} \im \la_{\b'\c}^\al\su{x,y} \not\in \kk{t\su{x,y} - p, t\su{x,y} + p},
 \end{align*}
 for the chosen $x, y\in F\cap I_{c}$, with $\b\in \Sigma^{n-N}$ s.t. the word $\b'\c$ is admissible, $ n > N$, and $ p\in \su{0,\frac{c_0}{4}}$.

\end{proof}

With Lemma \ref{lms:HUNI_concentration_fractal_domain}, we can finish the proof of Proposition \ref{lms:HUNI_HAdler_tree_lemma_countable_IFS_fractal_domain} in the following:

\begin{proof}[Proof of Proposition \ref{lms:HUNI_HAdler_tree_lemma_countable_IFS_fractal_domain}]

		The proof is done by induction on the variable $n$. Write $A\equiv \mar{HA}\su{\Phi}$. Let us first note that the loss function $L_{x,y,z,t,\sigma,\al}\su{n}$ has the following autosimilarity property:


  \begin{align*}
    L_{x,y,z,t,\sigma,\al}\su{n} &= \sum_{\a\in\Sigma^{k} \atop \a \to x,y,z}w_\a\su{x} L_{x\su{\a},y\su{\a},z\su{\a},t\su{\a,\al,\Phi}\su{.,.},\sigma\su{\a,y,z,\al},\al}\su{n - k},
 \end{align*}

 for any $1\leq k\leq n-1$. This can be seen as follows: Fix $1\leq k\leq n-1$ and partition a word $\c\in\Sigma^{n}$ into a suffix $\a\in\Sigma^{k}$ and a prefix $\b\in\Sigma^{n-k}$, so that $\c = \b'\a$.

 Next the inclusion condition
		\begin{align*}
			\la_{\b'\a}^\al\su{y,z}\in\kk{t\su{y,z} - \si, t\su{y,z} + \si},
		\end{align*}

 for $y,z$ such that $\b'\a\to y,z$ can be rewritten \say{in terms of a fixed $\a\in\Sigma^{k}$} with the use of the mean value theorem as follows:
 \begin{align*}
 \la_{\b'\a}^\al\su{y,z} &= \la_\a^\al\su{y,z} + \frac{\log\su{\ab{g_{\b}'\su{g_{\a}\su{y}}}} - \log\su{\ab{g_{\b}'\su{g_{\a}\su{z}}}}}{\ab{y - z}^\al}\\
 &= \la_\a^\al\su{y,z} +\la_\b^\al\su{g_\a\su{y},g_\a\su{z}}\frac{\ab{g_\a\su{y} - g_\a\su{z}}^\al}{\ab{y - z}^\al}\\
 &= \la_\a^\al\su{y,z} +\la_\b^\al\su{g_\a\su{y},g_\a\su{z}}\ab{g_\a'\su{\eta\su{\a, y,z}}}^\al,
 \end{align*}
 for some $\eta\su{\a, y,z}\in\su{z,y} \subset I_{b(\mathbf{a})}$. Hence,
		\begin{align*}
			t\su{y,z} - \si &\leq \la_\a^\al\su{y,z} +\la_\b^\al\su{g_\a\su{y},g_\a\su{z}}\ab{g_\a'\su{\eta\su{\a, y,z}}}^\al \leq t\su{y,z} + \si\Lo\\
			\frac{t\su{y,z} - \la_{\a}^\al\su{y,z} - \si}{\ab{g_\a'\su{\eta\su{\a, y,z}}}^\al} &\leq \la_{\b}^\al\su{g_{\a}\su{y},g_{\a}\su{z}} \leq \frac{t\su{y,z} - \la_{\a}^\al\su{y,z} + \si}{\ab{g_\a'\su{\eta\su{\a, y,z}}}^\al}.
		\end{align*}
 Hence, we can set
		\begin{align*}
			t\su{\a,\al,\Phi}\su{y,z} &\equiv \frac{t\su{y, z} - \la_{\a}^\al\su{y,z}}{\ab{g_\a'\su{\eta\su{\a, y,z}}}^\al},\ \si\su{\a,y,z,\al} \equiv \frac{\si}{\ab{g_\a'\su{\eta\su{\a, y,z}}}^\al},\\
 x\su{\a} &\equiv g_\a\su{x},\ y\su{\a} \equiv g_\a\su{y}, \ z\su{\a} \equiv g_\a\su{z}.
		\end{align*}
 Thus,
 \begin{align*}
 \la_{\b}^\al\su{g_{\a}\su{y},g_{\a}\su{z}} \in \kk{t\su{\a,\al,\Phi}\su{.,.} - \si\su{\a,y,z,\al}, t\su{\a,\al,\Phi}\su{.,.} + \si\su{\a,y,z,\al}},
 \end{align*}
 and consequently
 \begin{align*}
 & L_{x,y,z,t,\sigma,\al}\su{n} = \sum_{\c\in\Sigma^{n}\atop \c \to x,y,z}w_\c\su{x}1_{\kk{t\su{y, z} - \si, t\su{y, z} + \si}}\su{\la_{\c}^\al\su{y,z}}\\
 &= \sum_{\a\in\Sigma^{k}\atop \a \to x,y,z}w_\a\su{x}\sum_{\b\in\Sigma^{n-k} \atop \b \to g_{\a}(x)}w_{\b}\su{g_\a\su{x}}1_{\kk{t\su{\a,\al,\Phi}\su{y, z} - \si\su{\a,y,z,\al}, t\su{\a,\al,\Phi}\su{y, z} + \si\su{\a,y,z,\al}}}\su{\la_{\b}\su{g_{\a}\su{y},g_{\a}\su{z}}}\\
 &= \sum_{\a\in\Sigma^{k}\atop \a \to x,y,z}w_\a\su{x} L_{x\su{\a},y\su{\a},z\su{\a},t\su{\a,\al,\Phi}\su{.,.},\sigma\su{\a,y,z,\al},\al}\su{n - k}.
 \end{align*}
 Next, let us consider the different cases of the induction proof:

 \textbf{Case 1.} If $n\leq N$, then it follows that
		\begin{align*}
			L_{x,y,z,t,\sigma,\al}\su{n} \leq \sum_{\a\in\Sigma^{n}\atop \a \to x,y,z}w_\a\su{x} = 1,
		\end{align*}
		and as $n - N\leq 0$ and $e^{-\alpha_N} < 1$, we have that $\exp\su{-\alpha_N \su{n-N}} \geq 1$ and so
 \begin{align*}
 L_{x,y,z,t,\sigma,\al}\su{n} &\leq \su{\frac{4\si}{c_0}}^{\ga} + \exp\su{-\alpha_N \su{n-N}}.
 \end{align*}

 We will then use Lemma \ref{lms:HUNI_concentration_fractal_domain} to cut unnecessary branches of the tree.

	 \textbf{Case 2.} If $N + 1\leq n$, there are two subcases to consider.

 \textbf{Case 2(a).} First, if $\si \geq \frac{c_0}{4}$, then $\frac{4}{c_0}\si\geq \frac{1}{\si}\si = 1$. As the weights $w_\a\su{x}$ sum to one w.r.t. $\a\in\Sigma^{N}$, we get the trivial bound
 \begin{align*}
 L_{x,y,z,t,\sigma,\al}\su{n}\leq 1 \leq \su{\frac{4\si}{c_0}}^{\ga} + \exp\su{-\alpha_N \su{n-N}}.
 \end{align*}

		\textbf{Case 2(b).} Assume that $\si < \frac{c_0}{4}$. Due to proof technical reasons we will proceed iteratively by showing that the induction hypothesis applies for all $n$ such that $pN + 1\leq n \leq \su{p+1}N$ for $p = 1,2,\dots$. This in turn will conclude the claim for all $n\in \N$.

 So suppose first that $N + 1 \leq n \leq 2N$. The autosimilar property of the loss function yields for $k = N$ that
 \begin{align*}
 L_{x,y,z,t,\si,\al}\su{n} &= \sum_{\a\in \Si^N\atop \a\to x,y,z}w_\a\su{x}L_{x\su{\a},y\su{\a},z\su{\a},t\su{\a,\al,\Phi}\su{.,.},\si\su{\a,y,z,\al},\al}\su{n-N}.
 \end{align*}
 We will next use Lemma \ref{lms:HUNI_concentration_fractal_domain} to cut unnecessary branches of the tree. Let $\c$ be such a suffix guaranteed by the Lemma \ref{lms:HUNI_concentration_fractal_domain} so that
 \begin{align}
 \label{tree_lemma_autosimilarity_reduction_eq}
 L_{x,y,z,t,\si,\al}\su{n} &= \sum_{\a\in \Si^N:\a\neq \c\atop \a\to x,y,z}w_\a\su{x}L_{x\su{\a},y\su{\a},z\su{\a},t\su{\a,\al,\Phi}\su{.,.},\si\su{\a,y,z,\al},\al}\su{n-N}.
 \end{align}




 Since $n - N\leq N$, the previously written steps allow us to apply the induction hypothesis
 \begin{align*}
 L_{x,y,z,t,\sigma,\al}\su{s} &\leq \su{\frac{4\si}{c_0}}^{\ga} + \exp\su{-\alpha_N \su{s-N}},
 \end{align*}
 to the value $s = n - N$, so that
		\begin{align}
 \label{tree_lemma_induction_hypothesis_applied}
			 L_{x,y,z,t,\sigma,\al}\su{n} &\leq \sum_{\a\in\Sigma^{N}:\a\neq\c \atop \a \to x,y,z }w_{\a}\su{x}\su{\su{\frac{4\si\su{\a,y,z,\al}}{c_0}}^{\ga} + \exp(-\alpha_N(n-2N))}.
		\end{align}
 Since $\si\su{\a,y,z,\al} = \si / \ab{g_\a'\su{\eta\su{\a, y,z}}}^\al$, we arrive to
		\begin{align}
			\label{ineqs:almost_last_ineq2_fractal_domain}
			 L_{x,y,z,t,\sigma,\al}\su{n} &\leq \sum_{\a\in\Sigma^{N}:\a\neq\c\atop \a \to x,y,z }w_{\a}\su{x}\su{\su{\frac{4\si}{c_0}}^{\ga} \ab{g_\a'\su{\eta\su{\a, y,z}}}^{-\al\ga} + \exp(-\alpha_N(n-2N))}.
		\end{align}
 where the sum can in the RHS of \eqref{ineqs:almost_last_ineq2_fractal_domain} can be bounded by the bounded $(C,\alpha)$-distortions by
 \begin{align*}
 \sum_{\a\in\Sigma^{N}:\a\neq\c\atop \a \to x,y,z }w_{\a}\su{x}\su{\su{\frac{4\si}{c_0}}^{\ga} e^{\al\ga C}\ab{g_\a'\su{x}}^{-\al\ga} + \exp\su{-\alpha_N\su{n-2N}}}.
		\end{align*}
 For any $0 < \gamma \leq \gamma_0/\alpha$, denote
 \begin{align*}
 \mal{Q}_{N,x}\su{\ga}\equiv e^{\al\ga_0 C} \sum_{\a\in\Sigma^N\atop \a \to x}w_{\a}\su{x}\ab{g_\a'\su{x}}^{-\al\ga} = e^{\al\ga \lambda N + \al\ga_0 C} \sum_{\a\in\Sigma^N\atop \a \to x} w_{\a}\su{x} (e^{\lambda N}\ab{g_\a'\su{x}})^{-\al\ga}.
 \end{align*}
 Next, an application of Taylor's theorem shows that
 \begin{align*}
 \fa \ga'\in\su{0,1}:\exists C_{\ga'} > 0:\fa x\in\ka{1,\infty}:\ln\su{x}\leq C_{\alpha,\ga'}x^{\alpha \ga'},
 \end{align*}
 where $C_{\alpha,\gamma'} = -W(-\alpha \gamma')/(\alpha \gamma') > 0$ and $W$ is the Lambert W-function. In turn, this shows that $-\ln\su{\ab{g_\a'\su{x}}}\leq C_{\alpha,\ga'}\ab{g_\a'\su{x}}^{-\alpha \ga'}$ so that
 \begin{align*}
 w_\a\su{x}\su{-\al\ln\su{\ab{g_\a'\su{x}}}}\ab{g_\a'\su{x}}^{-\al\ga}\leq \al C_{\ga',\alpha}w_\a\su{x}\ab{g_\a'\su{x}}^{-\al(\ga + \ga')}.
 \end{align*}
 Hence, as long as $\max\{\gamma,\gamma'\} \leq \frac{\gamma_0}{2\alpha}$, then $\al(\ga + \ga')\leq \ga_0$, which shows that $\mal{Q}_{N,x}\su{\ga}$ is differentiable w.r.t. $\ga$, and we can bound the derivative
 \begin{align*}
 \mathcal{Q}_{N,x}'(\gamma)=e^{\al\ga_0 C} \sum_{\mathbf{a} \rightarrow x} w_{\mathbf{a}}(x) (-\alpha \ln |g_{\mathbf{a}}'(x)|) |g_{\mathbf{a}}'(x)|^{-\alpha \gamma} \leq \alpha C_{\ga',\alpha} e^{\al\ga_0 C}\mal{Q}_{N,x}(\gamma + \gamma').
 \end{align*}
 Now, by Proposition \ref{prop:moments}, as long as $\max\{\gamma,\gamma'\} \leq t_{\gamma_0}/\alpha$ we have
 \begin{align*}\mal{Q}_{N,x}(\gamma+\gamma') &= e^{\al(\ga+\ga') \lambda N + \al\ga_0 C} \sum_{\a\in\Sigma^N\atop \a \to x} w_{\a}\su{x} (e^{\lambda N}\ab{g_\a'\su{x}})^{-\al(\ga+\ga')}\\
 & \leq e^{\al(\ga+\ga') \lambda N} 2C_\phi e^{\eps_{\gamma_0} N \alpha^2 (\ga+\ga')^2} \\
 & \leq 2C_\phi e^{\al\ga_0 C}
 \end{align*}
 as long as $\max\{\gamma,\gamma'\} \leq \min\{\frac{1}{\alpha \lambda N},\frac{\alpha \lambda N}{\eps_{\gamma_0} N\alpha^2}\}$ since then $\al(\ga+\ga') \lambda N + \eps_{\gamma_0} N \alpha^2 (\ga+\ga')^2 \leq 1$. Thus by the mean value theorem
 $$\mal{Q}_{N,x}(\gamma) \leq 1+\gamma \mal{Q}_{N,x}'(\gamma) \leq 1+2\gamma \alpha C_{\ga',\alpha} e^{2\al\ga_0 C}C_\phi.$$
 Now recall that $\c$ in Lemma \ref{lms:HUNI_concentration_fractal_domain} is one of the words $\a_1,\b_1 \in \Sigma^N$ in the $(c_0,\alpha)$-UNI assumption of the IFS $\Phi$. Since
 $$\eps_{\mathrm{UNI}} \leq w_\c(x)$$
 and with the choices $\gamma':= \min\{\frac{\gamma_0}{2\alpha},\frac{t_{\gamma_0}}{\alpha}\}$ and
 $$\gamma := \min\Big\{\gamma_0,\frac{\gamma_0}{2\alpha},\frac{1}{\alpha \lambda N},\frac{\alpha \lambda N}{\eps_{\gamma_0} N\alpha^2},\frac{1}{2(\lambda N +C)}, \frac{\eps_{\mathrm{UNI}}}{4 \alpha C_{\ga',\alpha} e^{2\al\ga_0 C}C_\phi}\Big\}.$$
we have that
$$\mal{Q}_{N,x}\su{\ga} \leq 1+w_\c(x)$$
so by defining
 \begin{align*}
 \mal{R}\su{\ga,N,\c}\equiv \sum_{\a\in\Sigma^N:\a\neq\c\atop \a \to x,y,z}e^{\al\ga C}w_{\a}\su{x}\ab{g_\a'\su{x}}^{-\al\ga}
 \end{align*}
 we have
 \begin{align}
 \mal{R}\su{\ga,N,\c} < 1+ w_\c(x) - e^{\al\ga C}w_\c\su{x}\ab{g_\c'\su{x}}^{-\al\ga}\leq 1 + w_\c\su{x}\su{1 - e^{\al\ga C} \ab{g_\c'\su{x}}^{-\al\ga}}.
 \end{align}
 Since $\ab{g_\c'\su{x}} < 1$, it follows that we have $1 - e^{\al\ga C} \ab{g_\c'\su{x}}^{-\al\ga} < 0,$
 so that $ w_\c\su{x}\su{1 - \ab{g_\c'\su{x}}^{-\al\ga}} < 0$ so for our choice of $\gamma$ we indeed have have:
 \begin{align}
 \label{ineqs:sigma_ineq_fractal_domain}
			 \su{\frac{4\si}{c_0}}^{\ga}\sum_{\a\in\Sigma^{N}:\a\neq\c\atop \a \to x,y,z }e^{\al\ga C}w_{\a}\su{x}\ab{g_\a'\su{x}}^{-\al\ga} &\leq \su{\frac{4\si}{c_0}}^{\ga}.
		\end{align}
 Lastly recall that
 \begin{align*}
 \al_N &\leq \al_N\su{\c} \equiv \inf_{x\in [0,1]}-\frac{1}{N}\ln\su{\sum_{\a\in\Sigma^{N}:\a\neq \c \atop \a\to x}w_{\a}\su{x}},
 \end{align*}
 so that,
 $$\sum_{\a\in\Sigma^{N}:\a\neq \c\atop \a \to x }w_{\a}\su{x} \leq \exp\su{-\al_N N},$$
 and so
 \begin{align*}
 \sum_{\a\in\Sigma^{N}:\a\neq\c\atop \a \to x,y,z }w_{\a}\su{x}\su{\exp(-\alpha_N(n-2N))}&\leq \exp\su{-\al_N\su{n-N}},
	\end{align*}
 which together with \eqref{ineqs:sigma_ineq_fractal_domain} shows that
 \begin{align*}
 \sum_{\a\in\Sigma^{N}:\a\neq\c\atop \a \to x,y,z }w_{\a}\su{x}\su{\su{\frac{4\si}{c_0}}^{\ga} e^{\al\ga C}\ab{g_\a'\su{x}}^{-\al\ga} + \exp\su{-\alpha_N\su{n-2N}}} &\leq \su{\frac{4\si}{c_0}}^{\ga} + \exp\su{-\al_N\su{n-N}}.
 \end{align*}
 Thus
 \begin{align*}
 L_{x,y,z,t,\sigma,\al}\su{n} &\leq \su{\frac{4\si}{c_0}}^{\ga} + \exp\su{-\al_N\su{n-N}},
 \end{align*}
 for all $n\in \N:N+1\leq n\leq 2N$.

 Next, let us note that the precise part where we had to use the assumption $N + 1\leq n \leq 2N$ was when we used the induction hypothesis to move from the equation \eqref{tree_lemma_autosimilarity_reduction_eq} to the inequality \eqref{tree_lemma_induction_hypothesis_applied}. Crucial to this application was that $n - N\leq B$ for some $B$ such that the induction hypothesis applies for all $k = 1,2,\dots,B$. Beyond this point, the assumption $N + 1\leq n\leq 2N$ played no part in rest of the proof. Therefore, now that the induction hypothesis applies for all $k = 1,\dots,2N$, we can redo the proof of the part b of the case 2 for $n$ s.t. $pN + 1\leq n\leq \su{p+1}N$ for $p = 2,3,\dots$. Hence the induction hypothesis applies for all $n\in \N$, which proves the claim.
\end{proof}

We now conclude this subsection by proving Theorem \ref{thm:QNL}, using the non-concentration estimate given by Proposition \ref{lms:HUNI_HAdler_tree_lemma_countable_IFS_fractal_domain}.

\begin{proof}
Suppose (UNI). Then Proposition \ref{lms:HUNI_HAdler_tree_lemma_countable_IFS_fractal_domain} holds. Let us now reduce (QNL) to this nonconcentration bound. Let $I \subset \mathbb{R}$ be some interval. Consider the sum:

$$ \sum_{\mathbf{b,c,d,e}} w_{\b} w_{\c} w_{\mathbf{d}} w_{\mathbf{e}} \chi_{[-\sigma,\sigma]}( S_{n} \tau \circ g_{\mathbf{b}}(x_\mathbf{d}) -S_{n} \tau \circ g_{\mathbf{b}}(x_\mathbf{e}) - S_{n} \tau \circ g_{\mathbf{c}}(x_\mathbf{d}) + S_{n} \tau \circ g_{\mathbf{c}}(x_\mathbf{e}) ) $$
$$ = \sum_{\mathbf{b,c,d,e}} w_{\b} w_{\c} w_{\mathbf{d}} w_{\mathbf{e}} \chi_{[-\sigma,\sigma]}\Big( \big(\lambda_{\b}^\alpha(x_{\mathbf{d}},x_{\mathbf{e}}) - \lambda_{\c}^\alpha(x_{\mathbf{d}},x_{\mathbf{e}}) \big) \big| x_\mathbf{d} - x_{\mathbf{e}} \big|^\alpha \Big) $$

Define a neighbourhood of the diagonal in the following way:
\begin{align*}
 \mathcal{D}_n(\sigma^{1/2}) := \{ (\mathbf{d},\mathbf{e}) \in \Sigma^{2n}, \ |x_\mathbf{d}-x_{\mathbf{e}}| \leq \sigma^{1/2} \}.
\end{align*}
Notice that, for every $(x,y) \in I_{\mathbf{d}} \times I_{\mathbf{e}}$ with $(\mathbf{d},\mathbf{e}) \in \mathcal{D}_n(\sigma^{1/2})$, we have
\begin{align*}
 |x-y| \leq |x-x_\mathbf{d}| + |y-x_\mathbf{e}| + |x_\mathbf{d}-x_{\mathbf{e}}| \lesssim 2 \kappa_+^n + \sigma^{1/2}.
\end{align*}
Summing characteristic functions then gives:
\begin{align*}
 \sum_{(\mathbf{d},\mathbf{e}) \in \mathcal{D}_n(\sigma^{1/2})} \chi_{I_\mathbf{d} \times I_{\mathbf{e}}} \leq \chi_{\{ (x,y), \ |x-y| \leq 2 \kappa_+^n + \sigma^{1/2} \}}.
\end{align*}
Upper regularity of the measure $\mu$ and Gibbs estimates then yields:
\begin{align*}
 &\sum_{(\mathbf{d},\mathbf{e}) \in \mathcal{D}_n(\sigma^{1/2})} w_{\mathbf{d}} w_{\mathbf{e}} \leq \iint_{I \times I} \sum_{(\mathbf{d},\mathbf{e}) \in \mathcal{D}_n(\sigma^{1/2})} \chi_{I_\mathbf{d} \times I_{\mathbf{e}}} d\mu^2(x,y)\\
 &\leq \mu^{\otimes 2}\big( (x,y) \in I^2, \ |x-y| \leq 2 \kappa_+^n + \sigma^{1/2} \big) = \int_{I} \mu(B(x,2 \kappa_+^n + \sigma^{1/2}) d\mu(x) \lesssim \kappa_+^{s_\mu n} + \sigma^{s_\mu/2}.
\end{align*}

We can then conclude by throwing away the diagonal part, and then bounding the off-diagonal part. For the diagonal, we find:
\begin{align*}
 \sum_{\su{\bd,\be}\in\mal{D}_n\su{\si^{1/2}}}w_{\mathbf{d}}w_{\mathbf{e}}\sum_{\b,\c}w_{\b} w_{\c} \chi_{[-\sigma,\sigma]}\Big( \big(\lambda_{\b}^\alpha(x_{\mathbf{d}},x_{\mathbf{e}}) - \lambda_{\c}^\alpha(x_{\mathbf{d}},x_{\mathbf{e}}) \big) \big| x_\mathbf{d} - x_{\mathbf{e}} \big|^\alpha \Big) \lesssim \kappa_+^{s_\mu n} + \sigma^{s_\mu/2}.
\end{align*}
It follows that we get a bound
$$ \sum_{\mathbf{b,c,d,e}} w_{\b} w_{\c} w_{\mathbf{d}} w_{\mathbf{e}} \chi_{[-\sigma,\sigma]}\Big( \big(\lambda_{\b}^\alpha(x_{\mathbf{d}},x_{\mathbf{e}}) - \lambda_{\c}^\alpha(x_{\mathbf{d}},x_{\mathbf{e}}) \big) \big| x_\mathbf{d} - x_{\mathbf{e}} \big|^\alpha \Big) $$
$$ \lesssim \kappa_+^{s_\mu n} + \sigma^{s_\mu/2} + \sum_{\su{\bd,\be}\notin\mal{D}_n\su{\si^{1/2}}}w_{\mathbf{d}}w_{\mathbf{e}}\sum_{\b,\c}w_{\b} w_{\c} \chi_{[-\sigma,\sigma]}\Big( \big(\lambda_{\b}^\alpha(x_{\mathbf{d}},x_{\mathbf{e}}) - \lambda_{\c}^\alpha(x_{\mathbf{d}},x_{\mathbf{e}}) \big) \big| x_\mathbf{d} - x_{\mathbf{e}} \big|^\alpha \Big) $$
$$ \lesssim \kappa_+^{s_\mu n} + \sigma^{s_\mu/2} + \sum_{\mathbf{b,c,d,e}} w_{\b} w_{\c} w_{\mathbf{d}} w_{\mathbf{e}} \chi_{[-\sigma^{1/2},\sigma^{1/2}]} \big(\lambda_{\b}^\alpha(x_{\mathbf{d}},x_{\mathbf{e}}) - \lambda_{\c}^\alpha(x_{\mathbf{d}},x_{\mathbf{e}}) \big) $$
To bound this second sum, we write
\begin{align*}
 &\sum_{\b,\c}w_{\b} w_{\c} \chi_{\kk{-\si^{1/2},\si^{1/2}}}\su{\la_{\b}\su{x_\bd,x_{\be}} - \la_{\c}\su{x_\bd,x_{\be}}} =\\ &\sum_{\c}w_{\c}\su{x}\sum_{\b}w_{\b} \chi_{\kk{\la_{\c}\su{x_\bd,x_{\be}}-\si^{1/2},\la_{\c}\su{x_\bd,x_{\be}} + \si^{1/2}}}\su{\la_{\b}\su{x_\bd,x_{\be}}}.
\end{align*}
Notice then that by Proposition \ref{lms:HUNI_HAdler_tree_lemma_countable_IFS_fractal_domain} we obtain for some $\ga > 0, c_0 > 0$:
\begin{align*}
 &\sum_{\b}w_{\b} \chi_{\kk{\la_{\c}\su{x_\bd,x_{\be}}-\si^{1/2},\la_{\c}\su{x_\bd,x_{\be}} + \si^{1/2}}}\su{\la_{\b}\su{x_\bd,x_{\be}}}\\
 &\leq \su{\frac{4 \si^{1/2}}{c_0}}^{\ga} + \exp\su{-\al_N\su{n - N}},
\end{align*}
where $N\in\N$ is a large enough constant such that it satisfies the UNI condition for the IFS $\Phi$.
Thus,
\begin{align*}
 &\sum_{\c}w_{\c}\sum_{\b}w_{\b} \chi_{\kk{\la_{\c}\su{x_\bd,x_{\be}}-\si^{1/2},\la_{\c}\su{x_\bd,x_{\be}} + \si^{1/2}}}\su{\la_{\b}\su{x_\bd,x_{\be}}}\\
 &\leq \su{\frac{12\si^{1/2}}{c_0}}^{\ga} + e^{{-\al_N \su{n - N}}}.
\end{align*}

Thus we can bound
\begin{align*}
 &\sum_{\bd,\be}w_{\bd}w_{\be}\sum_{\b,\c}w_{\b}w_{\c} \chi_{\kk{-3\si^{1/2},3\si^{1/2}}}\su{\la_{\b}\su{x_\bd,x_{\be}} - \la_{\c}\su{x_\bd,x_{\be}}}\\
 &\lesssim \kappa_+^{s_\mu n} + \sigma^{s_\mu/2} + \sum_{\su{\bd,\be}\not\in\mal{D}_n\su{\si^{1/2}}} w_{\bd} w_{\be} \sum_{\b,\c}w_{\b}w_{\c}\chi_{\kk{-\si^{1/2},\si^{1/2}}}\su{\la_{\b}\su{x_\bd,x_{\be}} - \la_{\c}\su{x_\bd,x_{\be}}}\\
 &\lesssim \kappa_+^{s_\mu n} + \sigma^{s_\mu/2} + \sum_{\su{\bd,\be}\not\in\mal{D}_n\su{\si^{1/2}}}w_{\bd}w_{\be} \ (\sigma^{\gamma/2} + e^{-\alpha_N (n-N)} )\lesssim\si^{\min(s_\mu,\gamma)/2} + \max(e^{-\alpha_N},\kappa_+^{s_\mu})^n.
\end{align*}
we thus proved (QLN) with constants $\Theta := \min(s_\mu,\gamma)/2 >0$ and $\rho := \max(e^{-\alpha_N},\kappa_+^{s_\mu}) \in (0,1)$.
\end{proof}

\section{Proof of Theorem \ref{thm:lyons} on the Lyons measure} \label{sec:lyons}

In this section we will prove Theorem \ref{thm:lyons} by inducing approach and using Theorem \ref{thm:mainCountable}. Fix $t\in(0,1)$. The Lyons IFS is the parabolic Möbius system
\[
\Phi=\Big\{f_t(x)=\frac{x+t}{x+t+1},\qquad
f_0(x)=\frac{x}{x+1}\Big\},\qquad x\in[0,1].
\]
With equal weights $(1/2,1/2)$, its invariant measure $\nu_t$ is Lyons conductance measure with the law
of a random continued fraction with i.i.d. digits in $\{0,t\}$, see
\cite{Lyons,simon2001invariant,mihailescu2016random}.
As in \cite{mihailescu2016random}, we accelerate along $\phi_2$ and obtain the
induced uniformly expanding countable IFS
\[
\tilde\Phi=\{\psi_n^t=f_0^n \circ f_t :\ n\in\N_0\},
\qquad
\psi_n^t(x)=\frac{x+t}{(n+1)(x+t)+1}.
\]
Its branches satisfy
\[
(\psi_n^t)'(x)=\frac{1}{\big((n+1)(x+t)+1\big)^2}<1,
\qquad x\in[0,1],
\]
and the invariance relation expands as
\[
\nu_t=\frac12\,(\phi_1^t)_*\nu_t+\frac12\,(\phi_2)_*\nu_t
\quad\Longrightarrow\quad
\nu_t=\frac12\sum_{n=0}^\infty 2^{-n}(\psi_n^t)_*\nu_t.
\]
Thus $\nu_t$ is a projection of a Gibbs measure on $\N^\N$ associated to a locally constant potential $\phi(\a) := \log 2^{-a_1}$ on $\N^\N$. Thus by Lemma \ref{lma:noatoms} $\nu_t$ has no atoms so we are in realm where Theorem \ref{thm:mainCountable} can be applied if we verify the UNI condition (so QNL holds by Theorem \ref{thm:QNL}) and light tail.

Let us now verify the UNI condition for the induced IFS.

\begin{lem}\label{lem:lyons-uni}
For every $t\in(0,1)$ the accelerated Lyons IFS
$\tilde\Phi=\{\psi_n^t\}_{n\ge0}$ satisfies the
$(c_0,1)$-UNI condition.
\end{lem}

\begin{proof}
Consider the branches
\[
\psi_0^t(x)=\frac{x+t}{x+t+1},
\qquad
\psi_1^t(x)=\frac{x+t}{2x+2t+1},
\]
giving us a finite uniformly hyperbolic analytic sub-IFS $\Phi_0 := \{\psi_0^t,\psi_1^1\}$. Now, we will just verify the $(c_0,1)$-UNI for $\Phi_0$, which implies it for $\tilde \Phi$ since if for $N \in \N$ we have the UNI condition for words $\a,\b \in \{0,1\}^N$, these will also appear coding in the infinite IFS.

To verify $(c_0,1)$-UNI, since $\Phi_0$ is analytic and hyperbolic, we can rely on \cite[Claim 2.1]{AlgomRHWang-Polynomial} as an avenue to prove this. We say that $\Phi_0$ is not $C^2$-conjugate to a linear IFS if
there does not exist a $C^2$ diffeomorphism
$h:[0,1]\to h([0,1])\subset\R$ such that for all $i\in\mathcal \{0,1\}$,
\[
h\circ \varphi_i \circ h^{-1}(x)=a_i x+b_i
\]
for some constants $a_i\neq 0$, $b_i\in\R$. Then Claim~2.1 of
\cite{AlgomRHWang-Polynomial} says if $\Phi_0$ is not $C^2$-conjugate to a linear IFS, then there exists
$N\in\N$ and admissible words
$\mathbf a,\mathbf b\in\{0,1\}^N$
such that the corresponding branches
$\psi_{\mathbf a}^t,\psi_{\mathbf b}^t$
satisfy
\[
m\le
\left|
\partial_x\Big(
\log(\psi_{\mathbf a}^t)'(x)
-
\log(\psi_{\mathbf b}^t)'(x)
\Big)
\right|
\le m',
\qquad x\in[0,1],
\]
for some constants $m,m'>0$ and $m-2\tilde C\,\tilde\rho^{\,N} > 0$. This implies the UNI since for any $\mathbf c \in \{0,1\}^n$ the same estimate holds for the concatenations
$\mathbf c\mathbf a$ and $\mathbf c\mathbf b$
up to a bounded distortion error:
\[
\left|
\partial_x\Big(
\log(\psi_{\mathbf c\mathbf a}^t)'-
\log(\psi_{\mathbf c\mathbf b}^t)'
\Big)(x)
\right|
\ge
m-2\tilde C\,\tilde\rho^{\,N}.
\]
Indeed, using the chain rule we have
\[
\log(\psi_{\mathbf c\mathbf a}^t)'(x)
=
\log(\psi_{\mathbf a}^t)'(\psi_{\mathbf c}^t(x))
+
\log(\psi_{\mathbf c}^t)'(x),
\]
and similarly for $\mathbf c\mathbf b$.
Subtracting gives
\[
\log(\psi_{\mathbf c\mathbf a}^t)'(x)
-
\log(\psi_{\mathbf c\mathbf b}^t)'(x)
=
\Big(
\log(\psi_{\mathbf a}^t)'
-
\log(\psi_{\mathbf b}^t)'
\Big)
(\psi_{\mathbf c}^t(x)).
\]
Differentiating and using bounded distortion of
$\tilde\Phi$ yields
\[
\Big|
\partial_x\Big(
\log(\psi_{\mathbf c\mathbf a}^t)'
-
\log(\psi_{\mathbf c\mathbf b}^t)'
\Big)(x)
\Big|
\ge
m-2\tilde C\tilde\rho^{\,N}
=:c_0>0.
\]
Thus by mean value theorem then gives for all
$x\neq y\in[0,1]$,
\[
\big|
X_{\mathbf c\mathbf a}(x,y)
-
X_{\mathbf c\mathbf b}(x,y)
\big|
\ge
c_0|x-y|,
\]
which is precisely the $(c_0,\alpha)$-UNI estimate
(with $\alpha=1$) required in
Theorem~\ref{thm:mainCountable}.

Thus we just need to verify $\Phi_0$ is not $C^2$-conjugate to a linear IFS. This will follow if we prove the following non-$C^1$-cohomology to constant property for the IFS $\Phi_0$: there does not exist $u\in C^1$ and constants
$c_0,c_1$ with
\begin{equation}\label{eq:cohom-lyons}
\log|(\psi_j^t)'(x)|
=
u(\psi_j^t(x))-u(x)+c_j, \quad j = 0,1.
\end{equation}
Indeed, arguing by contraposition, suppose that $\Phi_0$ were
$C^2$-conjugate to a linear IFS. Then there exists a
$C^2$ diffeomorphism
$h:[0,1]\to h([0,1])\subset\R$ and constants
$a_j\neq0$, $b_j\in\R$ such that
\[
h\circ\psi_j^t\circ h^{-1}(y)=a_jy+b_j,
\qquad j=0,1.
\]
Equivalently,
$\psi_j^t=h^{-1}\circ L_j\circ h$ where
$L_j(y)=a_jy+b_j$.
Differentiating and using the chain rule gives for all $x\in[0,1]$
\[
(\psi_j^t)'(x)
=
a_j\,
\frac{h'(x)}{h'(\psi_j^t(x))}.
\]
Taking logarithms and setting
$u(x):=-\log h'(x)\in C^1([0,1])$ and
$c_j:=\log|a_j|$,
we obtain
\[
\log|(\psi_j^t)'(x)|
=
u(\psi_j^t(x))-u(x)+c_j,
\qquad j=0,1,
\]
which is precisely \eqref{eq:cohom-lyons}.
Therefore the failure of \eqref{eq:cohom-lyons}
implies that $\Phi_0$ cannot be $C^2$-conjugate to a linear IFS.

So we are done if we can prove the non-$C^1$-cohomology to constant property \eqref{eq:cohom-lyons} for $\Phi_0$. Assume on the contrary that \eqref{eq:cohom-lyons} holds. Consider the composed branch
\[
\psi_{(1,0)}^t(x)
=\frac{(1+t)x+t^2+2t}{(3+2t)x+2t^2+5t+1}.
\]
Let $x_0,x_1$ be the positive fixed points of $\psi_0^t,\psi_1^t$ and
let $x_{10}$ be the fixed point of $\psi_{(1,0)}^t$:
\[
x_0=\frac{-t+\sqrt{t^2+4t}}{2},\qquad
x_1=\frac{-t+\sqrt{t^2+2t}}{2},\qquad
x_{10}=\frac{-t(1+t)+\sqrt{t(1+t)(t+2)(t+3)}}{3+2t}.
\]
Then by \eqref{eq:cohom-lyons} we know that
$$\log (\psi_{(1,0)}^t)'(x_{10}) =
\log (\psi_0^t)'(x_0)+
\log (\psi_1^t)'(x_1).$$
Indeed, summing \eqref{eq:cohom-lyons} along the periodic orbit of the word
$(1,0)$ (at $x_{10}$) yields
\[
\log(\psi_{(1,0)}^t)'(x_{10})=c_1+c_0.
\]
Evaluating \eqref{eq:cohom-lyons} at $x_0,x_1$ gives
$\log(\psi_0^t)'(x_0)=c_0$ and
$\log(\psi_1^t)'(x_1)=c_1$ as claimed.

However, if we define
\[
Q(t):=
\log (\psi_{(1,0)}^t)'(x_{10})
-\log (\psi_0^t)'(x_0)
-\log (\psi_1^t)'(x_1)
\]
we can compute
\[
(\psi_0^t)'(x)=\frac{1}{(x+t+1)^2},\qquad
(\psi_1^t)'(x)=\frac{1}{(2x+2t+1)^2},
\]
\[
(\psi_{(1,0)}^t)'(x)
=\frac{1}{\big((3+2t)x+2t^2+5t+1\big)^2}.
\]
so evaluating the derivatives at these periodic points yields
\begin{align*}
(\psi_0^t)'(x_0)
&=\frac{4}{\big(t+2+\sqrt{t^2+4t}\big)^2},\\[4pt]
(\psi_1^t)'(x_1)
&=\frac{1}{\big(t+1+\sqrt{t^2+2t}\big)^2},\\[4pt]
(\psi_{(1,0)}^t)'(x_{10})
&=\frac{1}{\big(t^2+3t+1+\sqrt{t(1+t)(t+2)(t+3)}\big)^2}.
\end{align*}

Substituting gives
\begin{equation}\label{eq:Qclosed-lyons}
Q(t)
=
2\log\!\Bigg(
\frac{\big(t+1+\sqrt{t^2+2t}\big)\big(t+2+\sqrt{t^2+4t}\big)}
{2\big(t^2+3t+1+\sqrt{t(1+t)(t+2)(t+3)}\big)}
\Bigg),
\end{equation}
but $Q(t)\neq 0$ for all $t\in(0,1)$. This is a contradiction. Thus \eqref{eq:cohom-lyons} holds. 
\end{proof}

To finish the proof of Theorem \ref{thm:lyons}, we will just need to check that satisfies the light tail property of $\nu_t$.

\begin{lem}
The conductance measure $\nu_t$ is light-tailed for the accelerated IFS for all $\gamma\in\R$, that is, for where $p_n=2^{-(n+1)}$, $n\ge0$:
$
\forall \gamma\in\R:\quad
\sup_{x\in[0,1]}
\sum_{n=0}^\infty
p_n\,\big|(\psi_n^t)'(x)\big|^{-\gamma}
<\infty.
$
\end{lem}

\begin{proof}
For the light tail, note that
\[
(\psi_n^t)'(x)=\frac{1}{\big((n+1)(x+t)+1\big)^2},
\]
so for any $x\in[0,1]$
\begin{align*}
\sum_{n=0}^\infty
p_n\,\big|(\psi_n^t)'(x)\big|^{-\gamma}
=
\sum_{n=0}^\infty
\frac{1}{2^{n+1}}
\big((n+1)(x+t)+1\big)^{2\gamma}
\le
\sum_{n=0}^\infty
\frac{1}{2^{n+1}}
\big((n+1)t+1\big)^{2\gamma}
=:S.
\end{align*}
If $\gamma\le0$, then $S\le1$.
If $\gamma>0$, the ratio test yields $S < 1$ as
\[
\frac{(n+2)^{2\gamma}/2^{n+2}}
     {(n+1)^{2\gamma}/2^{n+1}}
=
\frac12\Big(\frac{n+2}{n+1}\Big)^{2\gamma}
\longrightarrow
\frac12<1.
\]
\end{proof}

Thus Theorem \ref{thm:lyons} is proved by Theorem \ref{thm:mainCountable}.

\section{Proof of Theorem \ref{thm:ps} on Patterson-Sullivan measures}\label{sec:ps}

Theorem \ref{thm:ps} on Patterson-Sullivan measures for hyperbolic surfaces with cusps follows from Theorem \ref{thm:mainCountable} when coding the limit set with the coding by Li and Pan \cite[Prop.~4.1]{LiPan}. Indeed Li-Pan construct a symbolic coding in which $\mu_\Gamma$ is a Gibbs measure with a light-tail property for a subshift of finite type of an IFS $\Phi$ satisfying (1)–(3) in our setting, and $\Lambda_\Gamma$ coincides with the attractor $F_\Phi$. More precisely, there exists a fundamental interval $I\subset\dot{\mathbb{R}}$ for the parabolic fixed point at $\infty$ (denoted $\Delta_\infty$ in \cite[§2.3]{LiPan}), constants $C_1>0$, $\kappa_+\in(0,1)$, $\gamma_0>0$, a countable family of disjoint open sets $\{I_a\}_{a\in\mathcal{A}}\subset I$, and an expanding map $T:\bigcup_a I_a\to I$ such that:
\begin{itemize}
\item[(1)] $\sum_{a\in\mathcal{A}}\mu_\Gamma(I_a)=\mu_\Gamma(I)$;
\item[(2)] for each $a$ there exists $\gamma_a\in\Gamma$ with $I_a=\gamma_a(I)$ and $T|_{I_a}=\gamma_a^{-1}$;
\item[(3)] (\emph{uniform contraction}) $|g_a'(x)|\le \kappa_+$ for $x\in I$;
\item[(4)] (\emph{bounded distortion}) $|\partial_x\log|\gamma_a'(x)||<C_1$ on $I$;
\item[(5)] (\emph{finite exponential moments}) with $\tau(x)=\log|DT(x)|$, one has $\int e^{\gamma_0\tau}\, d\mu_\Gamma<\infty$ for some $\gamma_0>0$.
\end{itemize}
Thus $\Phi=\{\gamma_a: a\in\mathcal{A}\}$ is an IFS with uniform $\kappa_+$–contraction and bounded $(C_1,1)$–distortion, and \cite{LiPan} shows
\[
C_{\phi,\gamma_0}=\sum_{a\in\mathcal{A}}\sup_{x\in I} |\gamma_a'(x)|^{\delta-\gamma_0}<\infty.
\]
With $\phi(x):=-\delta\log|\gamma_a'(x)|_{\mathbb{S}^1}$ for $x\in I_a$, equivariance yields that $\mu_\Gamma$ is a Gibbs measure for $\phi$, and the $(C_{\phi,\gamma_0},\gamma_0)$–tail condition holds. Finally, Li–Pan prove a UNI statement \cite[Lemma~4.5]{LiPan}: writing $\Phi^n=\{\gamma_{\mathbf{a}}:\mathbf{a}\in\mathcal{A}^n\}$ and $\tau_n(x)=\sum_{k=0}^{n-1}\tau(T^k x)$,
there exist $r,c_0>0$ such that for all large $N$ there are words $\{\gamma_{mj}\}_{m=1,2;\,1\le j\le j_0}$ with
\[
\big|\partial_z\big(\tau_{N}(\gamma_{1j}(z))-\tau_N(\gamma_{2j}(z))\big)\big|_{z=y}\big|\ \ge c_0,
\qquad \forall\,y\in B(x,r),\ \forall\,x\in \Lambda_\Gamma\cap \overline I,
\]
which implies $(c_0,1)$–UNI by the mean value theorem as we argued in the proof of Theorem \ref{thm:lyons}. Therefore Theorem~\ref{thm:mainCountable} applies, proving Theorem~\ref{thm:ps}. The decay exponent in Theorem~\ref{thm:ps} can be traced explicitly in terms of the contraction/distortion data, the UNI constants, and the tail exponent (all available from \cite{LiPan}).

\section{Proof of Theorem \ref{thm:FourierMP} on Manneville-Pommeau and Lorenz systems}\label{sec:para}

This section is devoted to the proof of Theorem \ref{thm:FourierMP}, which gives Fourier decay for some invariant measures for the non-accelerated dynamics of the Manneville-Pommeau map $T_\alpha$. The proof strategy is similar to Theorem \ref{thm:lyons} but as the measures we consider are not just Bernoulli measures like for the Lyons conductance measures $\nu_t$, we need more careful use of the unducing approach. The strategy here may be of independent interest to get power Fourier decay for Gibbs measures using inducing in general. 

To be able to use Theorem \ref{thm:mainCountable} to study Theorem \ref{thm:FourierMP}, we need to check that the inverse branches $g_a$ of $T_A$ indeed satisfy the axioms (1)-(3). The conditions (1) and (2) are immediate, e.g. they were deduced in \cite{LorenzYoungCoding} for Lorenz-maps but for Manneville-Pommeau maps we can for example see that $T_A$ satisfies the following $(C,1)$-distortion bound: there exists a constant $C > 0$ such that, for all $n \geq 1$ and any word $\mathbf{a} \in \mathbb{N}^n$:
\begin{align}
 \label{Adler_ineq}
 \ab{(S_n \tau \circ g_\mathbf{a})'} \leq C,
\end{align}
where $\tau = \log T_A'$ and $g_\a = g_{a_1} \circ \dots \circ g_{a_n}$. Indeed, and induction argument (with $n=1$ covered e.g. in \cite{Leslie}) shows that there exists $C \geq 1$ such that, for all $n \geq 0$, we have $\mar{Adl}(T_A^n) \leq C$, where $\mar{Adl}\su{f} \equiv \no{\frac{f''}{\su{f'}^2}}_\infty$, which implies the bounded distortions by the identity
\begin{align*}
 (S_n \tau \circ g_\mathbf{a})' = \frac{(T_A^n)'' \circ g_\mathbf{a}
 \cdot g_\mathbf{a}'}{(T_A^n)' \circ g_\mathbf{a}} = \text{Adl}(T_A^n) \circ g_\mathbf{a}.
\end{align*}

Now in both cases of Manneville-Pommeau or Lorenz maps, the induced map $T_A$ also satisfies the following Uniform Non-Integrability condition:

\begin{lem}[UNI]\label{lma:UNImpLorenz}
The $T_A$ associated to the MP map satisfies $(c_0,1)$-UNI on $I$.
\end{lem}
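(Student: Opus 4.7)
My plan is to exhibit, for every $N \geq 2$ and every $c \in \cA$, two admissible words $\a, \b \in \Sigma^N$ with $b(\a) = b(\b) = c$ realizing UNI with a uniform constant. Two structural facts drive the construction. First, the Markov diagram of $T_A$ is full: $T_A(A_b) = I$ for every $b \in \cA$, so every transition $(a,b)$ is admissible and I can concatenate symbols freely. Second, the branch $g_0 = f_0$ is affine of slope $\tfrac{1}{2}$---the unique affine inverse branch, inherited from the linearity of $T_\alpha$ on $[1/2, 1]$. Accordingly I take
$$\a := (\underbrace{0, 0, \dots, 0}_{N-1}, c), \qquad \b := (\underbrace{0, 0, \dots, 0}_{N-2}, 1, c).$$

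The second step is to compute the two log-derivative-derivatives pointwise on $I_c$. Since every outer branch of $\a$ is the affine $g_0$, the composition $g_\a = f_0^{N-1}|_{I_c}$ is itself affine and $(\log g_\a')' \equiv 0$. For $\b$, using $g_1 = f_0 \circ f_1$ and collecting all the $f_0$ factors, the composition telescopes to $g_\b = f_0^{N-1} \circ f_1|_{I_c}$, so that $g_\b'(y) = 2^{-(N-1)} f_1'(y)$ and
$$(\log g_\b')'(y) = (\log f_1')'(y) = \frac{f_1''(y)}{f_1'(y)} = -\frac{T_\alpha''(f_1(y))}{T_\alpha'(f_1(y))^{2}},$$
where the last equality follows by differentiating the relation $f_1' = 1/(T_\alpha' \circ f_1)$.

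Third, I would produce a uniform lower bound on this quantity on all of $I = [1/2, 1]$, hence on every $I_c = A_c \subset I$. Since $f_1$ is increasing with $f_1(1) = 1/2$, the image $f_1(I)$ is a compact subinterval of $(0, 1/2]$ bounded away from the parabolic fixed point at $0$. On this set $T_\alpha''(z) = 2^\alpha(1+\alpha)\alpha z^{\alpha-1}$ is continuous and strictly positive, while $T_\alpha'(z) \leq 2+\alpha$ is bounded above; a direct evaluation at $z = 1/2$ already gives the explicit bound
$$\left|\frac{f_1''(y)}{f_1'(y)}\right| \geq c_0 := \frac{2\alpha(1+\alpha)}{(2+\alpha)^2} > 0, \qquad y \in I.$$
The mean value theorem applied to the identity $X_\a(x,y) - X_\b(x,y) = \int_y^x [(\log g_\a')'(t) - (\log g_\b')'(t)] \, dt$ then yields $|X_\a(x,y) - X_\b(x,y)| \geq c_0 |x-y|$ for every $x \neq y$ in $F \cap I_c$, which is exactly $(c_0, 1)$-UNI, with the constant independent of both $N \geq 2$ and $c \in \cA$.

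Main obstacle: the argument is essentially direct, and the only substantive content is in choosing the comparison pair. The critical structural input is the existence of an affine inverse branch $g_0$ in the induced IFS, arising from the linearity of $T_\alpha$ on $[1/2, 1]$: this collapses one side of the comparison and reduces UNI to an elementary uniform estimate on the single non-affine map $f_1$. Without such an affine branch (e.g.\ for the Lorenz induced system), one would need to compare two distinct nonlinear branches and verify that their log-derivative-derivatives differ by a uniform constant on the attractor, which would demand a more delicate argument.
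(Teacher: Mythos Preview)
Your proof is correct and follows essentially the same approach as the paper: both exploit the affine branch $g_0 = f_0$ to collapse one side of the comparison and reduce UNI to a uniform lower bound on $|(\log f_1')'| = |T_\alpha'' \circ f_1|/(T_\alpha' \circ f_1)^2$ over $I$. The paper takes $\a = 0^{N-1}1$ and $\b = 0^N$ and asserts the bound holds on all of $I$ (using that the induced map is full-branch), whereas you append the symbol $c$ to both words to make the condition $b(\a)=b(\b)=c$ explicit; this is a cosmetic difference. You also supply the explicit constant $c_0 = \tfrac{2\alpha(1+\alpha)}{(2+\alpha)^2}$, which the paper does not.
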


\begin{proof}[Proof of Lemma \ref{lma:UNImpLorenz}]
Let us first show for MP maps that there exists $c_0\in\su{0,1}$ and $N \geq 1$ such that, for all $n \geq N$, there exists $ \a,\b\in \Sigma^n$ such that for all $ x \in I$:
		\begin{align*}
			\ab{\su{S_n\tau\circ g_\a - S_n\tau\circ g_\b}'\su{x}} > c_0.
		\end{align*}
Note that explicitly $g_0 = f_0|_{I}$, so $g_0$ is linear map. Consider the words
\begin{align*}
 \a = 0^{N-1} 1 \quad \text{and} \quad \b = 0^N,
\end{align*}
where the power indicates that the letter is repeated that many times to make a longer word. This implies for all $N \in \N$ and $x \in I$ that
\begin{align*}
 \ab{\su{S_N\tau\circ g_\a - S_N\tau\circ g_\b}'\su{x}} = \Big|\frac{d}{dx}\Big(-\log g_1'(x) + \log 2^{N-1} - (- \log 2^N)\Big)\Big| = \Big|\frac{d}{dx} \log g_1'(x)\Big|.
\end{align*}
Now, if $x \in A_n$ we have $T_A(x) = (f_1^{-1})^n(f_0^{-1}(x))$ so in particular we know that $g_1(x) = (f_0 \circ f_1)(x)$ for all $x \in I$. Thus at $x \in I$ we have
\begin{align*}
 g_1'(x) &= \frac{1}{2} f_1'(x) = \frac{1}{2} \cdot \frac{1}{1+(\alpha +1)2^{\alpha} f_1(x)^{\alpha}},
\end{align*}
and thus
\begin{align*}
 \Big|\frac{d}{dx} \log g_1'(x)\Big| = \frac{\alpha(\alpha+1) 2^{\alpha} f_1(x)^{\alpha -1}}{(1+(\alpha +1)2^{\alpha} f_1(x)^{\alpha})^2}.
\end{align*}
Notice finally that for all $x \in I$, we have $f_1(x) \in I_1 \subset (0,1/2]$. In particular, we get the bound:
\begin{align*}
 \forall x \in I, \ \frac{\alpha(\alpha+1) 2^{\alpha} f_1(x)^{\alpha -1}}{(1+(\alpha +1)2^{\alpha} f_1(x)^{\alpha})^2} > c_0,
\end{align*}
for some $c_0 > 0$ that only depends on $\alpha$. Now, using the mean value theorem (since $S_n \tau \circ g_\mathbf{a}$ is $C^1$ on $I$), one can rewrite the above into a condition that does not involve any derivatives:
\begin{align*}
 \forall (x,y) \in I^2, \ |S_n \tau \circ g_\mathbf{a}(x) - S_n \tau \circ g_\mathbf{b}(x) - S_n \tau \circ g_\mathbf{a}(y) + S_n \tau \circ g_\mathbf{b}(y)| \geq c_0 |x-y|,
\end{align*}
which is exactly the bound of $(c_0,1)$-UNI.
\end{proof}

\begin{remark}
As for UNI condition for Lorenz maps, we can argue as in the proof of Theorem \ref{thm:lyons}. As with Theorem \ref{thm:lyons}, the sub-IFS $\{g_0,g_1\}$ for the induced system satisfies UNI if it is not $C^2$ conjugated to a linear IFS. Now $C^2$ conjugacy to an affine IFS is not possible in general, and we do this for specific numbers here to get examples of Lorenz maps. Indeed, denoting by $x_0,x_1,x_{10}$ the fixed points of $g_0,g_1,g_{10}$ respectively, we have that
$ \log (g_{1}\circ g_{0})'(x_{10}) = \log g_{0}'(x_0) + \log g_1'(x_1)$. We omit the details here, once can proceed as in Theorem \ref{thm:lyons} by finding the fixed points and computing the derivatives. One can see this already in simulations e.g. for $b_0 = -1/2$, $b_1 = 1/2$, $a = 1.1$ and $\alpha = 0.25$ reveals $g_0(x) = -0.68301 (0.5 - x)^4$ with fixed point $x_0 = -0.07429$, $g_0’(x) = 2.73205 (0.5 - x)^3$ with $g_0’(x_0) = 0.51748$, $g_1(x) = 0.68301 (0.5 + x)^4$ with fixed point $x_1 = 0.07429$, $g_1’(x) = 2.73205 (0.5 + x)^3$ with $g_1’(x_1) = 0.51748$ so the product $g_0’(x_0) g_1’(x_1) \approx 0.26779$. Moreover, the composition $\su{g_1 \circ g_0}(x) = 0.683013 (0.5 -0.683013 (0.5 - x)^4)^4$ has fixed point $x_{10} = 0.03259$ and the derivative $(g_1 \circ g_0)’(x) = 7.46411 (0.5 - 0.683013 (-0.5 + x)^4)^3 (0.5 - x)^3$ with $(g_1 \circ g_0)’(x_{10}) \approx 0.077825$. Thus: $| \log (g_1 \circ g_0)’(x_{10}) - \log g_0’(x_0)-\log g_1’(x_1)| \approx 1.23572$.
\end{remark}

Thus we are in the setting of Theorem \ref{thm:mainCountable} to have that all equilibrium states for $T_A$ of light-tail have powefr Fourier decay. Then to go back to the parabolic system, we will just have to use the inducing structure in the following:

\begin{proof}[Proof of Theorem \ref{thm:FourierMP} assuming Theorem \ref{thm:mainCountable}] Let us first write the Manneville-Pommeau map case. Firstly, since
\begin{align*}
 \nu_\phi(E) = \sum_{a = 0}^\infty \sum_{k = 0}^a \mu_\phi(T_\alpha^{-k}(E) \cap A_a), \quad E \subset I,
\end{align*}
we can write
\begin{align*}
 \int \chi(x) e^{i\xi \psi(x)} \, d\nu_\phi(x) = \sum_{a = 0}^\infty \sum_{k = 0}^a \int_{I} \chi(T_\alpha^k(x)) e^{i\xi \psi(T_\alpha^k(x))} \1_{A_a}(x) \, d\mu_\phi(x).
\end{align*}
Applying the transfer operator associated to $\phi$ once, we see that this equals to
\begin{align*}
 \sum_{a = 0}^\infty \sum_{k = 0}^a \sum_{b = 0}^\infty \int_{I} \chi(T_\alpha^k(g_b(x))) e^{i\xi \psi(T_\alpha^k(g_b(x)))} \1_{A_a}(g_b(x)) e^{\phi(g_b(x))}\, d\mu_\phi(x).
\end{align*}
Notice that as $g_b : I \to A_b$, we have that $\1_{A_a}(g_b(x)) = 1$ if and only if $a = b$, so we arrive to
\begin{align*}
 \sum_{a = 0}^\infty \sum_{k = 0}^a \int_{I} \chi(T_\alpha^k(g_a(x))) e^{i\xi \psi(T_\alpha^k(g_a(x)))} e^{\phi(g_a(x))}\, d\mu_\phi(x).
\end{align*}
Define now maps $\psi_{a,k} : I \to \R$ and $\chi_{a,k} : I \to \R$ by the formulae
\begin{align*}
 \psi_{a,k}(x) := \psi(T_\alpha^k(g_a(x))), \quad \chi_{a,k}(x) := \chi(T_\alpha^k(g_a(x))) \frac{e^{\phi(g_a(x))}}{e^{\phi(g_a(3/4))}}, \quad x \in I.
\end{align*}
Then we arrive to the formula
\begin{align*}
 \sum_{a = 0}^\infty \sum_{k = 0}^a e^{\phi(g_a(3/4))} \int_{I} \chi_{a,k}(x) e^{i\xi \psi_{a,k}(x)} d\mu_\phi(x).
\end{align*}
Notice that, for any $a$, we have $T_\alpha^a(g_a(x))=x$. Remember also that $|T_\alpha'|\geq 1$. It follows that, for any $k \leq a$, we have a bound
$ |(T_{\alpha}^k \circ g_\mathbf{a})'(x)| \leq 1 $. In particular, for every $k\geq a$, we have a uniform bound $\| T_\alpha^k \circ g_a \|_{C^\alpha} \lesssim 1$. It follows that we can find uniform bounds
\begin{align*}
 \|\chi_{a,k}\|_{C^{\alpha}} &\lesssim 1,\\
 \|\log \psi_{a,k}\|_{C^{1+\alpha}} &\lesssim 1.
\end{align*}
As $\inf |\psi'| > c$ and $|T_\alpha'| \geq 1$, we have
\begin{align*}
 |\psi_{a,k}'(x)| \geq c|g_a'(x)| \geq c'|g_a'(3/4)|,
\end{align*}
 for some constant $c' > 0$ by the bounded distortion property of $T_A$.
We will then cut the sum over $a$ depending on if $|g_a'(3/4)| \geq |\xi|^{-\varepsilon_\psi}$, where $\varepsilon_\psi$ is small enough so that Theorem \ref{thm:mainCountable} gives us a uniform decay bound. Notice also that, because of the support condition on $\chi$, there exists some integer $K(\chi) \geq 1$ such that $\chi_{a,k}=0$ for any $k \geq K(\chi)$. We then find:
$$ \Big|\int \chi(x) e^{i\xi \psi(x)} \, d\nu_\phi(x)\Big| \leq \sum_{a=0}^\infty \sum_{k=0}^a e^{\phi(g_a(3/4))} \Big| \int_I \chi_{a,k}(x) e^{i \xi \psi_{a,k}(x)} d\mu_\phi(x) \Big| $$
$$ \leq \underset{|g_a'(3/4)| \geq |\xi|^{-\varepsilon_\psi}}{\sum_{a \geq 0}} \sum_{k=0}^a e^{\phi(g_a(3/4))} \Big| \int_I \chi_{a,k}(x) e^{i \xi \psi_{a,k}(x)} d\mu_\phi(x) \Big| + \underset{|g_a'(3/4)| \leq |\xi|^{-\varepsilon_\psi}}{\sum_{a \geq 0}} \sum_{k=0}^a e^{\phi(g_a(3/4))} \|\chi_{a,k}\|_{\infty} $$
$$ \lesssim {\sum_{a \geq 0}} K(\chi) e^{\phi(g_a(3/4))} |\xi|^{-\beta} + {\sum_{a \geq 0}} K(\chi) e^{\phi(g_a(3/4))} \mathbf{1}_{|g_a'(3/4)| \leq |\xi|^{-\varepsilon_\psi}}. $$
$$ \lesssim |\xi|^{-\beta} + {\sum_{a \geq 0}} e^{\phi(g_a(3/4))} \mathbf{1}_{|g_a'(3/4)| \leq |\xi|^{-\varepsilon_\psi}} $$
$$ \lesssim |\xi|^{-\beta} + {\sum_{a \geq 0}} e^{\phi(g_a(3/4))} |g_a'(3/4)|^{-\gamma} |\xi|^{-\gamma \varepsilon_\psi} \lesssim |\xi|^{- \min( \beta, \gamma \varepsilon_\psi )},$$
which concludes the proof.

For Lorenz maps $f$ the proof is same due to the formula for the pushed measure given by the same expression as the Manneville-Pommeau maps. The main difference is the choice of $3/4$ which we now choose inside the inducing region constructed in \cite{LorenzYoungCoding} in a neighbourhood of $0$.
\end{proof}

\section{Proof of Theorem \ref{thm:C1alpha} on examples of $C^{1+\alpha}$ IFSs}\label{sec:IFSconstruct}

We now prove Theorem \ref{thm:C1alpha} by constructing $C^{1+\alpha}$ IFSs satisfying MNL, yielding fractal equilibrium states with power Fourier decay. The construction proceeds via fixed point theory in the moduli space of IFSs parametrized by equilibrium states, ensuring the required non-concentration to implement the proof of Theorem~\ref{thm:mainCountable}. The key input is the existence of a branch whose log-derivative has $\alpha$–Hölder \emph{lower} regularity, as furnished by a Cantor-type function with jumps precisely on the attractor.

Let us first define the space of Frostman measures $\mu$ in $[-1,1]$ for given constants $C\ge1$ and $\varepsilon>0$ by
\[
\mathcal{M}_{C,\varepsilon}:=\Big\{\mu\in\mathcal{P}([-1,1])\ :\ \mu(I)\le C |I|^\varepsilon\ \text{for all intervals } I \text{ with } |I|\le \tfrac{1}{100}\Big\}.
\]
Given $\mu\in\mathcal{M}_{C,\varepsilon}$, we can define an IFS $\Phi_\mu$ depending on $\mu$ with branches
\[
g_{\mu,1}(x):=\kappa \int_0^x \exp(\mu(-\infty,t])\, dt + b_1,\qquad g_{\mu,0}(x):=\kappa_0 x + b_0,\qquad x\in[0,1],
\]
where $0<\kappa_0\le\kappa<1/10$ and $b_0,b_1\in\R$ are chosen so that $\Phi_\mu=\{g_{\mu,0},g_{\mu,1}\}$ satisfies the strong separation condition. By the Frostman property, $\Phi_\mu$ is $C^{1+\alpha}$ for any $\alpha\le\varepsilon$. Let $T_\mu$ be the associated expanding map on $F_{\Phi_\mu}$ and write $\tau_\mu(x):=-\log |T_\mu'(x)|$ as the $C^\alpha$ roof function.

\begin{thm}\label{thm:C1+alpha}
For any $C>0$ and $\varepsilon>0$, there exist $\mu\in\mathcal{M}_{C,\varepsilon}$ and $0<\delta_\mu<1$ such that the Gibbs measure associated to the potential $-\delta_\mu \tau_\mu$ for the (truly) $C^{1+\varepsilon}$ IFS $\Phi_\mu$ has power Fourier decay.
\end{thm}

The proof performs a fixed-point analysis for ``$\mu$–controlled'' transfer operators
\[
\mathcal{L}_\mu f(x):=\sum_i |g'_{\mu,i}(x)|^{\delta_\mu}\, f(g_{\mu,i}(x)),
\]
which depend on the equilibrium state $\mu$, where $\delta_\mu \in (0,1)$ is chosen as the unique number such that
$$\cL_{\mu}^* (\mu)(\R) = 1,$$
which exists by the intermediate value theorem as $\mu$ is a probability measure. Then this gives us an operator $\Psi : \cM_{C,\eps} \to \cM_{C,\eps}$ defined by
$$\Psi(\mu) := \cL_{\mu}^* (\mu), \quad \mu \in \cM_{C,\eps}.$$
Now the strategy to prove Theorem \ref{thm:C1+alpha} is to construct a fixed point $\mu$ for the operator $\Psi$, that is, a measure satisfying
$$\cL_{\mu}^* (\mu) = \mu.$$
Therefore, the pressure $P(\phi^\mu) = 0$ for $\phi^\mu := \delta_\mu \tau_\mu$, which implies, as $\Phi_\mu$ is $C^{1+\alpha}$, that $\delta^\mu = \Hd (\mathrm{supp} \, \mu)$ and that $\mu$ is $\delta^\mu$ Ahlfors-David regular. This then allows us to have a convenient form for the geometric potential $\tau_\mu$: for some $c_1 \in \R$, we have
$$\log g_1'(x) = c_1 + \mu(-\infty,x) \in C^{\delta_\mu}.$$
We can find this fixed point $\mu$ by adapting Schauder's fixed point theorem to the operator $\Psi : \cM_{C,\eps} \to \cM_{C,\eps}$, which is possible by the following Lemma:

\begin{lem}\label{lma:propertiesC1alpha}
 \begin{itemize}
 \item[(0)] As a map from $\cM_{C,\eps}$, the maps $\mu \mapsto g_{\mu,i}'$ and $\mu \mapsto g_{\mu,i}$ are continuous
 \item[(1)] The choice of $\delta^\mu$ is well-defined and continuous as a map of $\mu$, and that $\delta^\mu \geq 3\eps$ for all $\mu \in \cM_{C,\eps}$
 \item[(2)] $\Psi$ is well-defined: $\Psi(\cM_{C,\eps}) \subset \cM_{C,\eps}$
 \item[(3)] $\Psi$ is continuous on $\cM_{C,\eps}$ in the weak-$\ast$ topology
 \item[(4)] $\cM_{C,\eps}$ is closed and convex
\end{itemize}
\end{lem}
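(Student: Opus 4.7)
The five items feed into each other, so I would prove them in the order (0), (4), (1), (2), (3).

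Item (0) reduces to the observation that every $\mu \in \cM_{C,\eps}$ is atomless (Frostman with positive exponent), so weak-$\ast$ convergence $\mu_n \to \mu$ forces pointwise, and by a Dini-type argument uniform, convergence of the cumulative distribution functions $F_{\mu_n} \to F_\mu$ on $[-1,1]$. Hence $g_{\mu,1}'(x) = \kappa \exp(F_\mu(x))$ depends continuously on $\mu$ in sup norm, and $g_{\mu,1}$ follows by bounded convergence of the integral defining it. Item (4) is routine: convexity is immediate from linearity of the Frostman bound in $\mu$, and closedness of $\cM_{C,\eps}$ in $\cP([-1,1])$ follows from the Portmanteau theorem applied to open intervals, together with the continuity of $\mu$ at endpoints to pass to the closed-interval version.

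For Item (1), the scalar equation defining $\delta_\mu$ reads
\begin{align*}
h_\mu(\delta) \;:=\; \int \cL_\mu 1 \, d\mu \;=\; \int \bigl(\kappa_0^\delta + (\kappa\, e^{F_\mu(x)})^\delta\bigr)\, d\mu(x) \;=\; 1.
\end{align*}
The function $h_\mu$ is strictly decreasing and continuous in $\delta$ with $h_\mu(0) = 2$ and $h_\mu(\delta) \to 0$, so $\delta_\mu$ exists and is unique by the intermediate value theorem, and depends continuously on $\mu$ by (0) together with the implicit function theorem. For the lower bound $\delta_\mu \geq 3\eps$, I would evaluate $h_\mu(3\eps) \geq \kappa_0^{3\eps} + \kappa^{3\eps} \geq 2\kappa^{3\eps}$, which exceeds $1$ as soon as $\kappa^{3\eps} > 1/2$; since the set-up leaves us free to shrink $\kappa$ when constructing $\Phi_\mu$, this can be arranged for any fixed $\eps > 0$.

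Item (2) is the main technical point. For a small interval $I$ I would estimate
\begin{align*}
\Psi(\mu)(I) \;=\; \sum_i \int_{g_{\mu,i}^{-1}(I)} |g_{\mu,i}'|^{\delta_\mu} \, d\mu \;\leq\; 2(\kappa e)^{\delta_\mu} \max_i \mu\bigl(g_{\mu,i}^{-1}(I)\bigr),
\end{align*}
bound each pre-image as an interval of length $\leq |I|/\min(\kappa_0,\kappa)$ using the uniform lower bound $|g_{\mu,i}'| \geq \min(\kappa_0,\kappa)$, and apply the Frostman property of $\mu$. Combined with $\delta_\mu \geq 3\eps$ from (1), the prefactor becomes $2\, e^{\delta_\mu} \kappa^{\delta_\mu}\min(\kappa_0,\kappa)^{-\eps} \leq 2 e\, \kappa^{2\eps}$, which is $\leq 1$ once $\kappa$ is small enough relative to $\eps$. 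The large-scale regime $|I|/\kappa > 1/100$ is handled by the trivial bound $\mu \leq 1$, using that $|I|^\eps$ is then comparable to $\kappa^\eps$ and absorbing into $C$.

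Finally, Item (3) combines (0) and (1): for fixed $f \in C([-1,1])$, the integrand $\phi_n(x) := \sum_i |g_{\mu_n,i}'(x)|^{\delta_{\mu_n}} f(g_{\mu_n,i}(x))$ converges uniformly to the $\mu$-analogue $\phi$, and then
\begin{align*}
\Bigl|\!\int \phi_n\, d\mu_n - \!\int \phi\, d\mu\Bigr| \;\leq\; \|\phi_n - \phi\|_\infty + \Bigl|\!\int \phi\, d\mu_n - \!\int \phi\, d\mu\Bigr| \;\longrightarrow\; 0
\end{align*}
by weak convergence $\mu_n \to \mu$. The hardest step is (2): one needs $\kappa$ small depending on $\eps$ in order to close the loop between $\delta_\mu \geq 3\eps$ and $\Psi(\mu) \in \cM_{C,\eps}$, so the quantifier order in the statement of Theorem \ref{thm:C1+alpha} (first $C,\eps$, then $\kappa,\kappa_0$) must be respected.
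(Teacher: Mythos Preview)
Your proposal is essentially correct and follows the same route as the paper: atomlessness of Frostman measures gives uniform convergence of cumulative distribution functions for (0), the intermediate value theorem plus a derivative/implicit-function argument handles (1), a direct Frostman bound on preimages gives (2), uniform convergence of integrands yields (3), and Portmanteau gives (4). The paper also exploits in (2) that a short interval $I$ meets at most one cylinder $I_a$, so only one branch contributes; you instead keep both terms and absorb the extra factor of $2$ into the choice of $\kappa$, which is fine.

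One slip worth flagging in Item (1): to obtain $\kappa_0^{3\eps}+\kappa^{3\eps}>1$ you need $\kappa_0,\kappa$ \emph{not too small} (since $t\mapsto t^{3\eps}$ is increasing on $(0,1)$), so ``shrink $\kappa$'' goes in the wrong direction and in fact conflicts with the smallness you require in Item~(2). The paper resolves this by restricting $\eps$ to be small (concretely $\eps<1/25$), which forces $\kappa_0^{3\eps}$ close to $1$ for any fixed $\kappa_0$ near $1/10$; alternatively, for small $\eps$ there is a non-empty window $2^{-1/(3\eps)}<\kappa<(2e)^{-1/(2\eps)}$ where both your constraints hold. Also, since $\kappa_0\leq\kappa$, your displayed inequality should read $\kappa_0^{3\eps}+\kappa^{3\eps}\geq 2\kappa_0^{3\eps}$ rather than $2\kappa^{3\eps}$. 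These are cosmetic fixes; the structure of your argument matches the paper's.
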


We postpone the proof of these properties the end of the section. Now, the fixed point property then allows us to prove Theorem \ref{thm:C1+alpha} by following Lemma:

\begin{lem}[Measure Non-Linearity]\label{lma:MNL}
The IFS $\Phi_\mu$ and the fixed point $\mu$ satisfies MNL condition: There exists $C_{\text{(MNL)}}$ such that for all $N \geq 1$, there exists $\widehat{\mathbf{a}},\widehat{\mathbf{b}} \in \mathcal{A}^N$ such that
$$\forall x_0 \in F, \ \forall I \subset \mathbb{R}, \ (X_\mathbf{a}(\cdot,x_0)-X_\mathbf{b}(\cdot,x_0))_*\mu(I) \leq C_{\text{(MNL)}} |I|. $$
\end{lem}

\begin{proof}
Since $\log g_1'(x) = c_1 + \mu(-\infty,x)$ for some constant $c_1 \in \R$, and since $\log g_0' = c_0$, we know that for every $N$ there exists $\mathbf{a},\mathbf{b} \in \mathcal{A}^N$ such that
$$S_N \tau_\mu \circ g_\a - S_N \tau_\mu \circ g_\b = F_\mu(x) + c_0.$$
Indeed, for example words $\mathbf{a} := 0^{N} 1$ and $\mathbf{b} := 0^{N-1}1$ gives us such property. The general fact that $(F_\mu)_* \mu$ is equal to the Lebesgue measure on $[0,1]$ since $\mu$ doesn't have atoms concludes our proof.
\end{proof}

So we are left with proving Lemma \ref{lma:propertiesC1alpha}.

\begin{proof}[Proof of Lemma \ref{lma:propertiesC1alpha}]

\textit{Property (0)}. Since any $\mu \in \cM_{C,\eps}$ has no atoms, we know that $\mu_n(-\infty,t] \to \mu(-\infty,t]$ if $\mu_n \to \mu$ in $\cM_{C,\eps}$. Thus by the dominated convergence theorem $g_{\mu_n,i}(x) \to g_{\mu,i}(x)$. Similarly $g_{\mu_n,i}'$ is continuous as the formula involves also just applying $\mu_n(-\infty,t] \to \mu(-\infty,t]$ and dominated convergence theorem. In fact, convergence for the cumulative distribution function is uniform since $\mu$ has no atoms (see for example Exercise 5.2 in \cite{Par}). \\

\textit{Property (1)}. Write $\cA = \{0,1\}$ and define
$$\zeta_\mu(\alpha) := \int_\R \sum_{a \in \cA} |g_{\mu,a}'(x)|^\alpha \, d\mu(x) = \sum_{a \in \cA} \int_\R |g_{\mu,a}'(x)|^\alpha \, d\mu(x).$$
Then $\zeta_\mu$ is continuous, decreasing and satisfies $\zeta_\mu(0) = 2$ and
$$\zeta_\mu(1) = \sum_{a \in \cA} \int_\R |g_{\mu,a}'(x)| \, d\mu(x) \leq \frac{6}{10} < 1$$
by the choice of $\kappa < 1/10$. Thus there indeed exists unique $\delta^\mu$ such that $\zeta_\mu(\delta^\mu) = 1$.

Let us now show that $\mu \mapsto \delta^\mu$ is continuous map from $\cM_{C,\eps}$. Suppose $\mu_n\to \mu$. We have for any $n \in \N$ by the choices of $\delta^\mu$ we have $\zeta_{\mu_n}(\delta^{\mu_n}) = 1 = \zeta_{\mu}(\delta^{\mu})$. Moreover, we know that $\zeta_{\mu_n} \to \zeta_\mu$ uniformly since $\mu$ has no atoms. Thus
$$|\zeta_{\mu}(\delta^{\mu_n}) - 1| \leq |\zeta_{\mu_n}(\delta^{\mu_n}) - 1| + |\zeta_{\mu}(\delta^{\mu_n}) - \zeta_{\mu_n}(\delta^{\mu_n})| \leq \|\zeta_\mu - \zeta_{\mu_n}\|_\infty$$
so $|\delta^{\mu_n} - \delta^\mu| \to 0$ as $n \to \infty$ as $\zeta_\mu'(\alpha) = \sum_a \int (\log |g_{\mu,a}'|) |g_{\mu,a}'|^{\alpha} \, d\mu$ giving $\zeta' \in [-C,-c]$ for some constants $C,c > 0$. Indeed, if $f \in C^1(\mathbb{R},\mathbb{R})$ be such that $f' \leq -c$, then for any $h > 0$ we have by Taylor's approximation $f(x+h) \leq f(x)- c_0 h$, so that if $f(x_0) \leq \varepsilon$, we know that the zero $z_0$ of $f$ satisfies $z_0 \in [x_0,x_0+ \varepsilon/c_0]$. Indeed, $f(x_0+ \varepsilon/c_0) \leq 0$ by the previous Taylor estimate and so by intermediate value theorem the zero is in the interval. Thus $\delta^\mu$ is continuous in $\mu$.

Finally, we notice that
$$\zeta_\mu(3 \eps) = \int_\R \sum_{a \in \cA} |g_{\mu,a}'(x)|^{3\eps} \, d\mu(x) \geq 3/2$$
if we assume $\eps < 1/25$ as then $|g_{\mu,a}'(x)|^{3\eps} \geq 10^{-3\eps} \geq 3/4$, so $\delta^\mu \geq 3\eps$. \\

\textit{Property (2)}. We need to check $\Psi(\cM_{C,\eps}) \subset \cM_{C,\eps}$ so fix $\mu \in \cM_{C,\eps}$. We know that $\mu$ is a probability measure as $\mu = \cL_{\mu}^*(\mu)$ and $\cL_{\mu}^*(\mu)(\R) = 1$ by the choice of $\delta^\mu$. Fix $h$ supported on $(1,\infty)$, which then gives
$$\int h \, d \cL_{\mu}^*(\mu) = \int \cL_{\mu}(h) \, d\mu= 0$$
as the support of $\cL_{\mu}(h)$ is also in $(1,\infty)$ as for all $x \in (-1,1)$ we have $g_{\mu,i}(x) \in (-1,1)$ and so $\cL_{\mu}(h)(x) = 0$. Hence we know that $\cL_{\mu}^*(\mu) \in \cP([-1,1])$. Let now $I \subset \R$ with $|I| \leq \frac{1}{100}$. Then
$$\cL_{\mu}^*(\mu)(I) = \int \cL_{\mu}(\1_I) \, d\mu.$$
As $|I| \leq \frac{1}{100}$, the interval $I$ can contain only one of the intervals $I_a$, $a \in \cA$, say, $a_0 \in \cA$, so
\begin{align*}
\cL_{\mu}(\1_I) &= \int |g_{\mu,a_0}'(x)|^{\delta^\mu}\1_I(g_{\mu,a_0}(x)) \, d\mu(x) \leq \kappa^{\delta^\mu} \mu(g_{\mu,a_0}^{-1}I) \\
& \leq \kappa^{\delta^\mu} |\kappa_-^{-1} I|^\eps \leq \kappa^{\delta^\mu} \kappa_-^{-\eps} | I|^\eps \leq |I|^\eps
\end{align*}
as long as $\eps > 0$ here is small enough and using $\delta_\mu \geq 1/25$ we noticed earlier in proving (1). Thus indeed $\Psi(\mu) \in \cM_{C,\eps}$. \\

\textit{Property (3)}. To check the continuity of $\Phi$, let us notice that for a continuous $h$ we can write
$$\int h \, d \cL_{\mu}^*(\mu) = \sum_{a \in \cA} \int |g_{\mu,a}'(x)|^{\delta^\mu} h(g_{\mu,a}(x)) \, d\mu(x) $$
and here if $\mu_n \to \mu$ the integrand $|g_{\mu_n,a}'(x)|^{\delta^{\mu_n}} h(g_{\mu_n,a}(x))$ converges uniformly to $|g_{\mu,a}'(x)|^{\delta^\mu} h(g_{\mu,a}(x))$ as $\mu$ has no atoms. This implies that $\int h \, d \cL_{\mu_n}^*(\mu_n) \rightarrow \int h \, d \cL_{\mu}^*(\mu)$.

\textit{Property (4)}. The set $\cM_{C,\eps}$ is convex as for any $t \in (0,1)$ and $\mu,\nu \in \cM_{C,\eps}$ we have for any interval $I$ that
$$(t \mu + (1-t) \nu)(I) \leq t|I|^\eps+(1-t)|I|^\eps = |I|^{\eps}.$$
Finally, the set $\cM_{C,\eps}$ is closed as if $\mu_n\to \mu$, with $\mu_n \in \cM_{C,\eps}$, then for any interval $I$ we also have $\mu_n(I) \to \mu(I) $ as $\mu$ has no atoms implying that $\mu(I) \leq C|I|^\eps$. \end{proof}

\section*{Acknowledgements} We thank Amir Algom, Simon Baker, Amlan Banaji, Jonathan Fraser, Thomas Jordan, Natalia Jurga, Osama Khalil, Fr\'ed\'eric Naud, Tuomas Orponen, Mike Todd and Caroline Wormell for useful discussions during the preparation of this manuscript. We thank in particular Simon Baker for suggesting to consider the Liverani-Saussol-Vaienti parametrisation for the Manneville-Pommeau map for studying the UNI and its generalisation, and discussions on the Lyons conjecture.

\bibliographystyle{plain}
\bibliography{bibliography}
\end{document}